\newtheorem{theorem}{Theorem}[section]
\newtheorem{corollary}[theorem]{Corollary}
\newtheorem{proposition}[theorem]{Proposition}
\newtheorem{lemma}[theorem]{Lemma}
\theoremstyle{definition}    
\newtheorem{definition}[theorem]{Definition}
\theoremstyle{remark}
\newtheorem{remark}[theorem]{Remark}
\newtheorem{example}[theorem]{Example}
\newtheorem{exercise}{Exercise}[section]
\newcommand\A{\mathcal{A}}
\newcommand{\EE}{{\mathbb{E}}}
\newcommand{\Cour}[1]      {[\![#1]\!]}
\newcommand\M{\mathcal{M}}
\newcommand{\V}{\mathcal{V}}
\renewcommand{\L}{\mathcal{L}}
\renewcommand{\O}{\mathcal{O}}
\newcommand{\T}{\mathbb{T}}
\newcommand{\ca}{\mathcal}
\newcommand{\E}{\ca{E}}
\newcommand{\R}{\mathbb{R}}
\newcommand{\C}{\mathbb{C}}
\newcommand{\SU}{\on{SU}}
\newcommand{\Z}{\mathbb{Z}}
\newcommand\pt{\on{pt}}
\newcommand{\ez}{\mathsf{e}}
\newcommand{\fz}{\mathsf{f}}
\newcommand{\az}{\mathsf{a}}
\newcommand{\bz}{\mathsf{b}}
\newcommand{\cz}{\mathsf{c}}
\newcommand{\vz}{\mathsf{v}}
\newcommand\lie[1]{\mathfrak{#1}}
\renewcommand{\k}{\lie{k}}
\newcommand{\h}{\lie{h}}
\newcommand{\g}{\lie{g}}
\renewcommand{\a}{\mathsf{a}}
\renewcommand{\t}{\lie{t}}
\newcommand{\on}{\operatorname}
\newcommand{\Ad}{ \on{Ad} }
\newcommand{\Hom}{ \on{Hom}}
\renewcommand{\ker}{ \on{ker}}
\newcommand{\Mult}{  \on{Mult}}
\newcommand{\da}{\dasharrow}
\newcommand\qu{/\kern-.7ex/} 
\newcommand{\lra}{\longrightarrow}
\newcommand{\hra}{\hookrightarrow}
\renewcommand{\d}{{\mbox{d}}}
\newcommand{\ol}{\overline}
\newcommand\Phinv{\Phi^{-1}}
\newcommand\eps{\epsilon}
\newcommand{\f}{\frac}
\newcommand{\p}{\partial}
\renewcommand{\l}{\langle}
\renewcommand{\r}{\rangle}
\newcommand\hh{{\f{1}{2}}}
\newcommand{\ti}{\tilde}
\newcommand{\eeq}{\end{eqnarray*}}
\newcommand{\beq}{\begin{eqnarray*}}
\newcommand{\pr}{\on{pr}}
\newcommand{\wh}{\widehat}
\newcommand{\wt}{\widetilde}
\newcommand{\mf}{\mathfrak}
\newcommand{\rra}{\rightrightarrows}
\newcommand{\ul}{\underline}
\renewcommand{\subset}{\subseteq}
\renewcommand{\S}{{\mathcal{S}}}
\newcommand{\sz}{\mathsf{s}}
\newcommand{\gz}{\mathsf{g}}
\newcommand{\tz}{\mathsf{t}}
\begin{document}
\title{Introduction to moduli spaces and Dirac geometry}
\author{Eckhard Meinrenken}

\begin{abstract}	
Let $G$ be a Lie group, with an invariant metric on its Lie algebra $\g$. 	
Given a surface $\Sigma$ with boundary, and a collection of base points $\V\subset \Sigma$ meeting every boundary component, 
the moduli space (representation variety) $\M_G(\Sigma,\V)$ carries a distinguished 
`quasi-symplectic' 2-form.	We shall explain the 
finite-dimensional construction of this 2-form and discuss its basic properties, using quasi-Hamiltonian techniques and Dirac geometry. 
This article is an extended version of lectures given at the  summer school  'Poisson 2024' at the Accademia Pontaniana in Napoli, July 2024. 
\end{abstract}
\maketitle

\tableofcontents

\section{Introduction}

This article is an extended version of  lectures given at the  summer school  'Poisson 2024' at the Accademia Pontaniana in Napoli, July 2024.
The moduli spaces in its title are the moduli spaces $\M_G(\Sigma,\V)$ of flat $G$-bundles over compact oriented surfaces $\Sigma$, with framings at a finite collection $\V\subset \Sigma$ of base points, meeting every component of the boundary. 
Equivalently, they are described as spaces of homomorphims from the fundamental groupoid into the Lie group $G$. If the Lie algebra of $\g$ carries an invariant metric, then the moduli space acquires a distinguished 2-form $\omega$. For the case without boundary (and $\V=\emptyset$), this is the Atiyah-Bott symplectic form. When $\Sigma$ has non-empty boundary,  then this 2-form is neither closed or nondegenerate, but for $\V\subset \p\Sigma$ it is \emph{quasi-symplectic} in the sense of quasi-Hamiltonian geometry. 

My original plan for the lectures had been to divide the material into two parts. The first part would present a  direct `low-tech' construction of the 2-form via cutting and gluing of the surface.  Most properties of the 2-form are direct consequences from the construction, 
with the exception of `minimal degeneracy' of the 2-form. This would then serve as a motivation for the second part, leading to an introduction to Dirac geometry as the proper framework for quasi-Hamiltonian spaces, with moduli spaces serving as the main examples.  
In reality, this plan was too ambitious, and the lectures never actually reached the second part. Accordingly, Sections  \ref{sec:overview} -- \ref{sec:cutglue} of the present article  are  written in the style of lecture notes, replete with exercises. In later sections the presentation  gradually morphs into a survey style, discussing material that was only hinted at in the lectures. 

In more detail, the material is organized as follows. Section \ref{sec:overview} begins with a rapid overview of moduli spaces of flat bundles over surfaces, and associated representation varieties. Section \ref{sec:basics} defines the moduli space $\M_G(\Sigma,\V)$ for any finite collection $\V\subset \Sigma$ (meeting every component)  as the space of homomorphisms  from the fundamental groupoid 
$\Pi(\Sigma,\V)$ into $G$. The space carries an action of $G^\V$, and taking holonomies along boundary edges $\ez\in \E$ defines an equivariant map 
$\Phi\colon \M_G(\Sigma,\V)\to G^\E$. Starting in Section 
\ref{sec:2form}, we assume that $\g=\on{Lie}(G)$ carries an invariant metric. This determines a \emph{Cartan 3-form}  $\eta\in \Omega^3(G)$ and a related 2-form $\beta\in \Omega^2(G\times G)$. The main result of this section is the existence of a distinguished 2-form 
$\omega$ on the space $\M_G(\Sigma,\V)$, whose differential is the sum of pullbacks of $\eta$ under the boundary holonomies, and whose contractions with the generators of the $G^\V$-action are described explicitly.  The 2-form admits a direct description for the case that $\Sigma$ is a polyhedral region (an $n$-gon); the general case is reduced to this case via gluing diagrams. Using cutting and re-gluing, we prove that the 2-form does not depend on the choice of gluing diagram. Section \ref{sec:cutglue} continues the discussion of cutting and gluing operations, examining in particular the gluing of two boundary circles of a surface. Among other things, this leads to  a description of the 2-form for surfaces without boundary. Section \ref{sec:goldman} is devoted to Hamiltonian dynamics on the moduli spaces: 
Every $G^\V$-invariant function $f$ on the moduli space determines a Hamiltonian vector field $X_f$ (despite the fact that 
$\omega$ is degenerate). For special choices of $f$, this leads to the so-called \emph{Goldman flows} which we compute explicitly. 
Section \ref{sec:quasisymplecticgroupoid} shows that the moduli space for a cylinder is naturally a `quasi-symplectic groupoid', where the  groupoid multiplication is defined via gluing of cylinders. Finally, Section \ref{sec:dirac} gives an introduction to Dirac geometry, and supplies proofs of minimal degeneracy properties of the 2-form via a cross-section theorem for Dirac structures. 

It is a pleasure to thank the organizers of the Poisson 2024 summer school and the Accademia Pontaniana 
for the invitation to present these lectures in a wonderful setting, as well as INdAM for sponsoring the event.  Special thanks to Luca Vitagliano for 
help during the meeting and Alfonso Tortorella, Antonio de Nicola, and Chiara Esposito for editing the 
proceedings.

\section{Representation varieties}\label{sec:overview}

Let $G$ be a Lie group, and $\Sigma$ a compact, connected surface, without boundary.  
\begin{center}
	\includegraphics[width=0.3\textwidth]{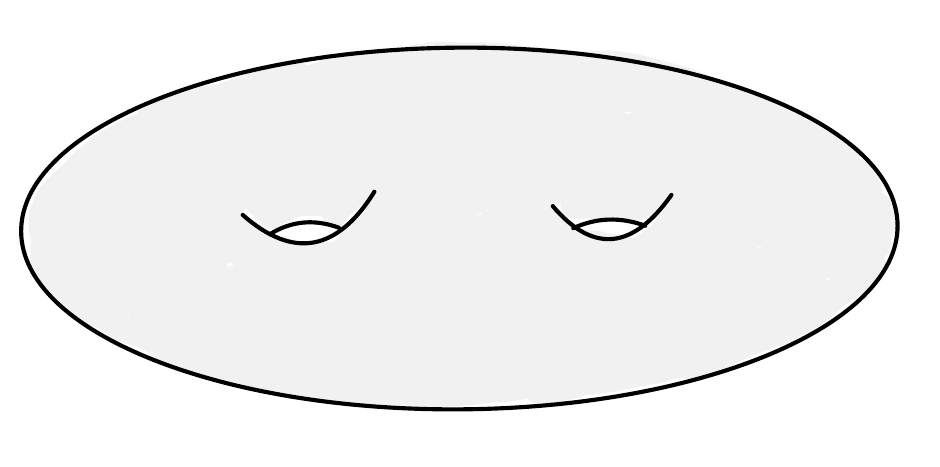}
\end{center}
The \emph{representation variety} or \emph{character variety} is the space 
\[ \M_G(\Sigma)=\Hom(\pi_1(\Sigma,x_0),G)/G\]
of homomorphisms from the fundamental group $\pi_1(\Sigma,x_0)$ into the group $G$, up to conjugacy.  
The definition of fundamental group involves the choice of a base point $x_0\in \Sigma$, but the quotient space no longer depends on this choice.  The space $\M_G(\Sigma)$ may also be regarded as the \emph{moduli space of flat bundles},
\[ \M_G(\Sigma)=\{\mbox{flat principal $G$-bundles}\ \  P\to \Sigma\}\big/\sim\]
where two such bundles are identified if they are related by 
a $G$-equivariant diffeomorphism intertwining the flat connections and inducing the identity on the base. The correspondence is induced by the map taking the homomorphism $\kappa\colon \pi_1(\Sigma,x_0)\to G$ to the associated principal bundle 
\[ P=\wt{\Sigma}\times_{\pi_1(\Sigma,x_0)} G.\] 
In the opposite direction, given a flat $G$-bundle $P\to \Sigma$, one obtains $\kappa$ as the parallel transport (holonomy) 
of the connection, after choice of trivialization $P|_{x_0}\cong G$. Different choices of trivialization at $x_0$ change $\kappa$ by $G$-conjugacy.

\begin{remark}
If the group $G$ is connected and simply connected, then every principal $G$-bundle $P\to \Sigma$ is isomorphic to the trivial one, and $\M_G(\Sigma)$ is the moduli space of flat connections on the trivial bundle.  Equivalently, it is 
the moduli space of Maurer-Cartan elements
\[ \M_G(\Sigma)=\Big\{A\in \Omega^1(\Sigma,\g)|\ \d A+\hh [A,A]=0=0\Big\}/C^\infty(\Sigma,G).\]
where  the group $C^\infty(\Sigma,G)$ acts 
by $g\cdot A=\Ad_g(A)-g^*\theta^R$. Here $\theta^R\in \Omega^1(G,\g)$ is the right-invariant Maurer-Cartan form. 
For general groups $G$, the space $\M_G(\Sigma)$  may also include non-trivial flat bundles. 
\end{remark}

One obtains a concrete description of the moduli space by choosing generators of the fundamental group. 
For a surface of genus $\gz$, there are the \emph{standard generators} 
$\az_1,\bz_1,\ldots,\az_g,\bz_g$ 
with the relation  $\az_1\bz_1 \az_1^{-1} \bz_1^{-1}
\cdots \az_g\bz_g \az_g^{-1} \bz_g^{-1}=1$. 
It corresponds to a gluing diagram, depicted below for genus $\mathsf{g}=2$.  
 \begin{center}
	\includegraphics[width=0.4\textwidth]{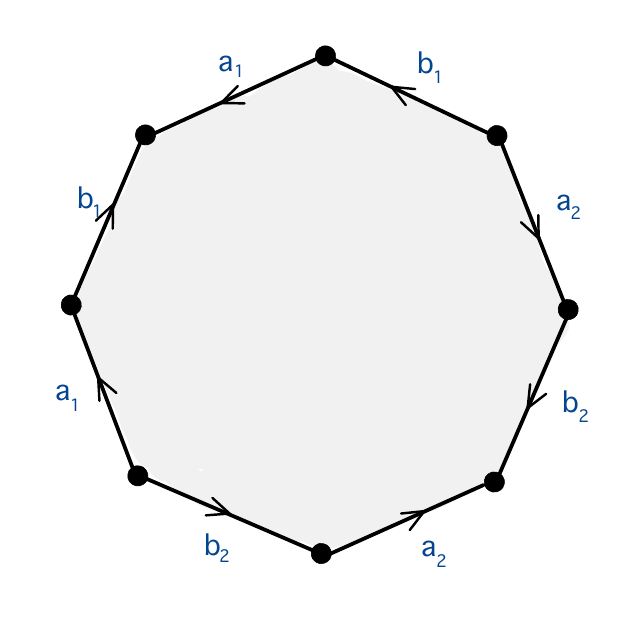}
\end{center}
In terms of these generators, the moduli space is 
identified with 
\[ \M_G(\Sigma)=\Big\{(a_1,b_1,\ldots,a_\gz,b_\gz)\in G^{2\gz}|\ \prod_{i=1}^\gz a_i b_i a_i^{-1} b_i^{-1}=e\Big\}\big/G\]
where we interpret $a_i,b_i\in G$ as the holonomies along $\az_i,\bz_i\in \pi_1(\Sigma,x_0)$. One may also consider other gluing diagrams presenting the surface, corresponding to `non-standard' generators. For example, gluing \emph{opposite} sides of an 8-gon gives a 
surface of genus $\gz=2$; this gluing pattern corresponds to generators $\cz_1,\ldots,\cz_4$ with a relation 
$\cz_1\cdots\cz_4\cz_1^{-1}\cdots \cz_4^{-1}=1$.

The space $\M_G(\Sigma)$ appears in many areas of mathematics and physics -- far too many to properly review them here. 
Let us just point out two major directions:
\begin{itemize}
\item For $G=\SU(n)$, and a given choice of complex structure on the surface, $\M_G(\Sigma)$ it is the moduli space of stable holomorphic vector bundles of rank $n$ and degree $0$ over $\Sigma$: this is the Narasimhan-Seshadri theorem \cite{na:st}.  In one direction, if $\kappa\colon \pi_1(\Sigma)\to \SU(n)$ 
is a homomorphism one obtains a holomorphic vector bundle 
\[ (\wt{\Sigma}\times \C^n)/\pi_1(\Sigma,x_0)\to \Sigma\] 
where $\pi_1(\Sigma,x_0)$ acts on $\C^n$ via $\kappa$. The theorem states that the holomorphic vector bundles obtained in this way are exactly the stable bundles of degree $0$; in particular, every
stable bundle arises in this way. The Narasimhan-Seshadri theorem has been vastly generalized to the \emph{non-abelian Hodge correspondence} for the \emph{Higgs moduli spaces} of Hitchin. 

\item If $G$ is connected but not simply connected, the isomorphism classes of principal $G$-bundles over $\Sigma$ are labeled by $\pi_1(G,e)=\Z$. Consider the case of $G=\on{PSL}(2,\R)$,  and suppose $\Sigma$ has negative 
Euler characteristic $\chi(\Sigma)=2-2\mathsf{g}<0$. By the Milnor-Wood inequality \cite{mil:ex,woo:bun}, the principal $\on{PSL}(2,\R)$-bundle labeled by the integer $k$ admits a flat connection if and only if $|k|\le |\chi(\Sigma)|$. 
Goldman \cite{gol:top} and Hitchin \cite{hit:self} proved that the component of $\M_G(\Sigma)$ labeled by $k$ is connected, and for  $|k|=|\chi(\Sigma)|$ coincides with the \emph{Teichm\"uller space}  of hyperbolic structures up to isotopy. (A sign change of $k$ amounts to a change  of orientation of the surface.)  Hitchin's discovery \cite{hit:lie} of similar contractible components 
(now called \emph{Hitchin components}) for other split semisimple groups $G$ such as $\on{PSL}(n,\R)$, 
has led to the subject of \emph{higher Teichm\"uller theory} \cite{bur:hig,foc:mod}.  In particular, it was shown by Choi and Goldman  
\cite{choi:con} that the Hitchin component for $G=\on{PSL}(3,\R)$ is the moduli space of convex $\R P(2)$-structures on $\Sigma$. 
\end{itemize}

It is a classical fact that a nondegenerate $\Ad_G$-invariant symmetric bilinear form (`metric') 
on the Lie algebra  
\[ \g=\on{Lie}(G)\]
determines a \emph{symplectic structure} on $\M_G(\Sigma)$. This symplectic structure was described by Atiyah-Bott \cite{at:mo} using a gauge theoretic approach (using infinite-dimensional symplectic reduction); 
in special cases it was observed earlier (e.g., by Narasimhan \cite{nar:mod1} for the moduli space of stable bundles, and by Ahlfors \cite{ahl:rem} for Teichm\"uller space).  In the first part of this article
we will explain a finite-dimensional construction, which 
was developed in the 1990s starting with work of Goldman  \cite{gol:sym}, Karshon \cite{kar:alg}, Weinstein \cite{we:symod},
and Guruprasad-Huebschmann-Jeffrey-Weinstein  \cite{gu:gr}. 
The approach below is based on the quasi-Hamiltonian techniques from  Alekseev-Malkin-Meinrenken \cite{al:mom}, 
with additional ideas due to Li-Bland and \v{S}evera \cite{lib:sypo,sev:mod}.  In this context, we mainly deal with moduli spaces associated to surfaces \emph{with non-empty boundary}. The case without boundary is included later, via `reduction'. As applications, we explain a 
computation of Goldman flows along these lines, as well as a construction of quasi-symplectic groupoids integrating the Cartan-Dirac structure (and its generalizations). 
\bigskip

\subsection{Exercises} 

\begin{exercise}
Explain in more detail why, for a connected surface $\Sigma$ and any two choices of base points $x_0,x_0'$, there is a \emph{canonical} isomorphism 
\[ \Hom(\pi_1(\Sigma,x_0),G)/G
\to \Hom(\pi_1(\Sigma,x_0'),G)/G.\] 
(Canonical in the sense that the isomorphism does not depend on additional choices.)  
\end{exercise}

\begin{exercise}
What surface is obtained by gluing the opposite signs of a $2n$-gon? (There will be two cases, depending on whether $n$ is even or odd.) 
\end{exercise}

\begin{exercise}[Representation varieties as moduli of flat bundles]
Let $\Sigma$ be a compact, connected, oriented surface without boundary, and $x_0\in \Sigma$ a base point. 
 Explain in detail why
 \[ \Hom(\pi_1(\Sigma,x_0),G)\] 
 is the space of  flat principal $G$-bundles $P\to \Sigma$ with a given trivialization (\emph{framing}) at the base point, $P|_{x_0}\cong G$, 
 up to  principal bundle isomorphisms intertwining the flat connections and the framings. 
\end{exercise}

\begin{exercise}\label{ex:cut}
		Let $\Sigma=\Sigma_g^0$ be compact, connected, oriented, without boundary, and  
	$D\subset \Sigma$ be an embedded closed disk. 
\begin{enumerate}
	\item Show that the surface with boundary $\wh{\Sigma}=\Sigma-\on{int}(D)$ retracts onto a wedge of $2g$ circles. (E.g.,  use the standard description of $\Sigma$ as being obtained from a $4g$-gon by boundary identifications.)  
	\item Use this to show that for $G$ connected, every principal $G$-bundle over $\Sigma-\on{int}(D)$ is trivial. 
	\item Suppose $G$ is connected. A general principal $G$-bundle $P\to \Sigma$ is obtained by gluing trivial bundles over 
	$\Sigma-\on{int}(D)$ and over $D$ by a clutching function, $\p D\to G$.   Show that this identifies the space of isomorphism classes of principal $G$-bundles with $\pi_1(G,e)$. 
\end{enumerate}	
\end{exercise}

\begin{exercise}
The previous exercise defines a 	map 
	\[ \M_G(\Sigma)\to \pi_1(G,e)\]
	taking isomorphism classes of flat $G$-bundles to isomorphism classes of $G$-bundles (forgetting the flat connection).  
This map can also be described directly in terms of homomorphisms of the 
fundamental group. 
 Let $\wh{\Sigma}=\Sigma-\on{int}(D)$ as above. Choose a base point $x_0$ on the boundary $\partial D
 \cong \partial \wh{\Sigma}$; this also serves as a base point for $\wh{\Sigma}$.  
	\begin{enumerate}
		\item Show that the fundamental group $\pi_1(\wh{\Sigma},x_0)$ is free on $2g$ generators,
		and fits into an exact sequence 
		\[ 1\to \pi_1(\partial\wh{\Sigma},x_0)\to 	\pi_1(\wh{\Sigma},x_0)\to \pi_1(\Sigma,x_0)\to 1\]
		where the subgroup $\pi_1(\partial\wh{\Sigma},x_0)\cong \Z$ is  generated by 
		the class of the oriented boundary loop of $\wh{\Sigma}$. 
		\item Suppose $G$ is connected, and let $\wt{G}$ 
		be its universal covering group. 
		Show that every homomorphism $\pi_1(\wh{\Sigma},x_0)\to G$ admits a lift to a homomorphism 
		$\pi_1(\wh{\Sigma},x_0)\to \wt{G}$. Show that its evaluation on the boundary loop is an element of 
		$\pi_1(G,e)\subset \wt{G}$, which furthermore is independent of the choice of lift. 
		\item Putting all this together, define a map 
		$\M_G(\Sigma)\to \pi_1(G,e)$. 
	\end{enumerate}
\end{exercise}
\bigskip

\section{The space $\M_G(\Sigma,\V)$}\label{sec:basics}
In order to develop tools of `cutting' and `gluing', it is convenient to generalize the setting to surfaces with possibly non-empty boundary and with more than one base point. This necessitates working with fundamental groupoids rather than just the fundamental group.


\subsection{Fundamental groupoids} \label{subsec:general}
Let $X$ be a topological space with finitely many path components, and  $Y\subset X$ a finite subset of base points (`vertices'), meeting all path components. 
We denote by 
\[ \Pi(X,Y)\rra Y\]
the \emph{fundamental groupoid}. The set $\Pi(X,Y)$ of arrows of this groupoid are homotopy classes of 
maps of pairs $\gamma\colon (I,\p I)\to (X,Y)$ relative to $\p I$, where $I=[0,1]$ is the unit interval.  
The source and target maps  are
\[ \sz([\gamma])=\gamma(0),\ \ \tz([\gamma])=\gamma(1),\]
and the groupoid multiplication is given by concatenation of paths $[\gamma']\circ [\gamma]=[\gamma'*\gamma]$,
using the convention 
\[ (\gamma_1*\gamma_2)(t)=\begin{cases}
\gamma_2(2t) & 0\le t\le \hh,\\
\gamma_1(2t-1) & \hh \le t\le 1.
\end{cases}\]
The groupoid inverse is given by pre-composition of paths with  $t\mapsto 1-t$.  The pairs $(X,Y)$ form a category, with morphisms 
$f\colon (X,Y)\to (X',Y')$ the continuous maps $f\colon X\to X'$ such that $f(Y)\subset Y'$; the fundamental groupoid construction is a 
covariant functor from this category into the category of groupoids. Note that $\Pi(f)\colon \Pi(X,Y)\to \Pi(X',Y')$ depends only on the homotopy class of 
$f$ relative to $Y$. 

\medskip

\begin{tcolorbox}
	\begin{definition}
		Given a Lie group $G$, we define the \emph{moduli space} $\M_G(X,Y)$ as the space of groupoid homomorphisms
		\[ \M_G(X,Y)=\Hom(\Pi(X,Y),G).\]
	\end{definition}
\end{tcolorbox}

An element $\kappa\in \M_G(X,Y)$ assigns to every path $\gamma$ with end points in $Y$ a \emph{holonomy}
$\kappa_\gamma\in G$, depending only on $[\gamma]$, in such a way that 
\[ \kappa_{\gamma_1\ast\gamma_2}=\kappa_{\gamma_1}\kappa_{\gamma_2}.\] 
The group $G^Y$ of maps $h\colon Y\to G,\ y\mapsto h_y$ 
acts on $\M_G(X,Y)$ by 
 \begin{equation}\label{eq:holtra}
  (h\cdot \kappa)_\gamma=h_{\gamma(1)}\,\kappa_\gamma h_{\gamma(0)}^{-1}.
 \end{equation}
The construction of $\M_G(X,Y)$ is covariant with respect to $G$ and contravariant  with respect to $(X,Y)$; 
in particular, any morphism of pairs $f\colon (X,Y)\to (X',Y')$ induces a map of moduli spaces, 
\begin{equation}\label{eq:functor} \M_G(f)\colon \M_G(X',Y')\to \M_G(X,Y),\end{equation} 
depending only on the homotopy class of $f$  relative to $Y$. 
The map $\M_G(f)$ is equivariant with respect to the pullback map $G^{Y'}\to G^Y$. 

\begin{example}\label{ex:quotients}
Consider the case that $f$ is the identity map of $X'=X$, but with $Y$ a subset of $Y'$. Then the map 
 \eqref{eq:functor}	is just the quotient map for the action of $G^{Y'-Y}\subset G^{Y'}$:  
\begin{equation}\label{eq:quotient1}
	 \M_G(X,Y)=\M_G(X,Y')/G^{Y'-Y}.\end{equation} 
To see this choose, for each element of $Y'-Y$, a path in $X$ to an  element of $Y$. Then every homomorphism $\kappa'\colon \Pi(X,Y')\to G$ is uniquely determined by its restriction $\kappa\colon \Pi(X,Y)\to G$ together with its values on the chosen paths. That is, the choice of paths identifies
\[ \M_G(X,Y')\cong \M_G(X,Y)\times G^{Y'-Y},\]
with the $G^{Y'-Y}$-action given by right multiplication on the second factor. 
\end{example}

This example indicates how to define $\M_G(X,Y)$ if $Y$ does not meet all components of $X$:  
enlarge to a set $Y'$ that does meet all components, and use \eqref{eq:quotient1}. In particular, 
we define 
\[ \M_G(X)=\M_G(X,\emptyset).\] 

\begin{example}
	Suppose $\Gamma$ is a directed graph (quiver) with vertices $\V_\Gamma$ and oriented edges $\E_\Gamma$. 
	Then  $\Pi(\Gamma,\V_\Gamma)$ is the \emph{free groupoid} over the set of edges, and 
\begin{equation}\label{eq:grap}
	\M_G(\Gamma,\V_\Gamma)=G^{\E_\Gamma}.
\end{equation}	
Here the same notation is used for the abstract graph $\Gamma$ and its geometric realization, taking a quotient of 
$ \sqcup_{\ez\in\E_\Gamma} I$ (one copy for each edge) by the relation determined by the source and target maps $\sz,\tz\colon \E_\Gamma\to \V_\Gamma$.
\end{example}
 As a consequence, whenever  $X$ retracts onto a (geometric) graph $\Gamma$ with $\V=Y$ as its vertices, then 
 $\Pi(X,Y)$ is free, and so $\M_G(X,Y)$ is a manifold diffeomorphic to an $\#\E_\Gamma$-fold 
  product of $G$.

\subsection{Surfaces with boundary}
Let $\Sigma$ be a compact oriented surface  with boundary. 
 Let $\V\subset \Sigma$ a set of vertices.  
 
\begin{tcolorbox}
 \begin{proposition} \label{prop:free}
 Suppose every component of $\Sigma$ has non-empty boundary, and $\V\subset \Sigma$ is a set of vertices 
 meeting each component. Then $\Sigma$ deformation retracts onto a graph $\Gamma\subset \Sigma$, with vertex set $\V_\Gamma=\V$. 
 		The number of edges of the graph is 		
 		\[\#\E_\Gamma= \#\V-\chi(\Sigma),\]
 		where $\chi(\Sigma)$ is the Euler characteristic. 
 	\end{proposition}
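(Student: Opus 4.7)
The plan is to construct a graph $\Gamma \subset \Sigma$ with vertex set exactly $\V$ onto which $\Sigma$ deformation retracts, after which the edge count follows from homotopy invariance of Euler characteristic. Since the components of $\Sigma$ contribute additively to both $\#\V - \chi(\Sigma)$ and $\#\E_\Gamma$, it suffices to treat the case when $\Sigma$ is connected, with $\p\Sigma \neq \emptyset$ and $\V \neq \emptyset$.

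First I would build a 1-complex that is a deformation retract. Choose a PL triangulation of $\Sigma$ whose 0-skeleton contains $\V$ and such that $\p\Sigma$ is a subcomplex. Since $\p\Sigma$ is non-empty, at least one edge on the boundary is a free face of a unique 2-simplex; the elementary collapse of this free face removes both and preserves homotopy type. Iterating, the classical PL-topological fact that a compact surface with non-empty boundary collapses onto a 1-dimensional spine allows elimination of every 2-simplex, producing a 1-complex $\Gamma_0 \subset \Sigma$ whose 0-skeleton contains $\V$ and which is a deformation retract of $\Sigma$. Next I would trim $\Gamma_0$ to a graph with vertex set exactly $\V$: iteratively delete leaf edges whose free endpoint is not in $\V$ (each deletion being itself a deformation retract), and then merge the two edges incident at any non-$\V$ vertex of degree two into a single edge. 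A sufficiently fine initial triangulation ensures all non-$\V$ vertices that survive these leaf removals are of degree two, so that this procedure terminates with a graph $\Gamma$ satisfying $\V_\Gamma = \V$, still a deformation retract of $\Sigma$.

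Given $\Gamma$, the edge count is immediate. Homotopy invariance gives $\chi(\Gamma) = \chi(\Sigma)$, and for a graph $\chi(\Gamma) = \#\V_\Gamma - \#\E_\Gamma$, so $\#\E_\Gamma = \#\V - \chi(\Sigma)$. The hard part of the argument is the collapsibility claim in the second step: showing that iterated free-face collapses reduce the full triangulation to its 1-skeleton. This is a classical statement in PL topology, equivalent to the existence of a handle decomposition of $\Sigma$ with no 2-handles (guaranteed exactly because $\p\Sigma \neq \emptyset$), but its direct combinatorial verification requires keeping track of the configuration of the remaining subcomplex after each collapse so as to ensure that a new free face always presents itself while any 2-simplex persists. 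Once this is in place, the rest of the proof is routine bookkeeping.
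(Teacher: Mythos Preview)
The Euler-characteristic calculation $\chi(\Sigma)=\chi(\Gamma)=\#\V_\Gamma-\#\E_\Gamma$ is exactly what the paper does for the edge count. For the existence of $\Gamma$ the paper gives only a sketch (two pictures and a pointer to an exercise whose hint is to use gluing diagrams), so your PL-collapse construction of a spine is a genuinely different and more explicit route.

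There is, however, a real gap in your trimming step. After collapsing to a $1$-complex $\Gamma_0$ and iteratively deleting non-$\V$ leaves, you assert that ``a sufficiently fine initial triangulation ensures all non-$\V$ vertices that survive these leaf removals are of degree two.'' Refining the triangulation gives no such control: the branching structure of the collapsed spine depends on the order of the elementary collapses, not on the mesh size, and whenever $\chi(\Sigma)<0$ the leafless graph is \emph{forced} to carry total excess degree $\sum_w(\deg w-2)=-2\chi(\Sigma)>0$ somewhere. For a pair of pants with $\V=\{v\}$ a single point, any leafless spine must contain either one degree-$4$ vertex or two degree-$3$ vertices, and nothing in your argument pins these to $v$. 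The repair is not via finer triangulations but by an additional ambient isotopy of the embedded graph inside the surface, sliding each offending branch point along an incident edge until it merges into a point of $\V$; alternatively, the gluing-diagram approach the paper hints at builds the graph with all branching already at the chosen vertices, sidestepping the issue. You also misidentify the ``hard part'': collapsibility of a compact surface with boundary to a $1$-complex is classical and essentially routine, whereas arranging the vertex set of the spine to be exactly $\V$ is the step that actually needs care.
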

 \end{tcolorbox}
 
 \begin{proof} 
 The following pictures give examples of such graphs; we leave the general case as an exercise. (Exercise \ref{ex:2.3}.) 
  	\begin{center}
  			\includegraphics[width=0.22\textwidth]{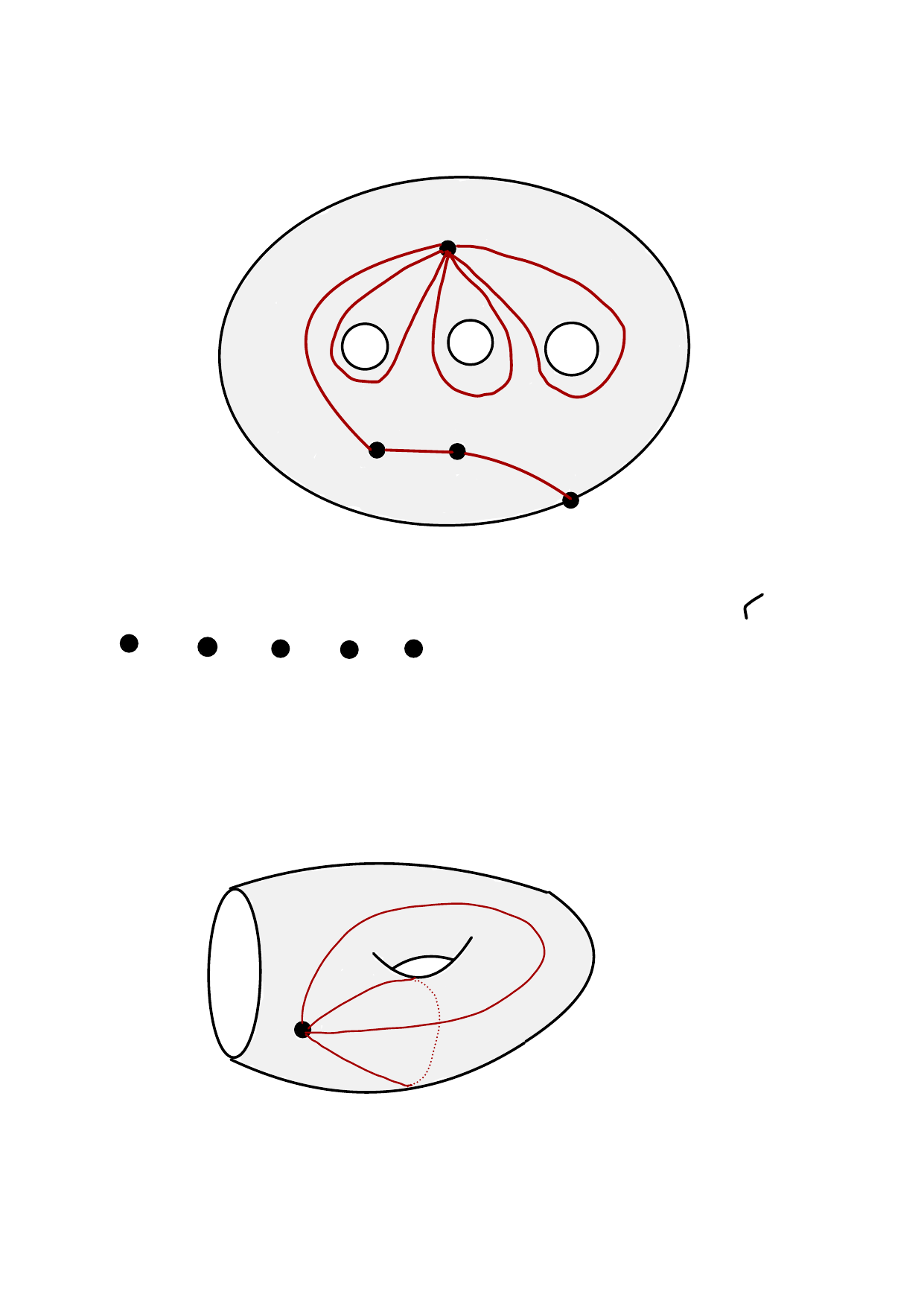}\ \ 
 	\includegraphics[width=0.25\textwidth]{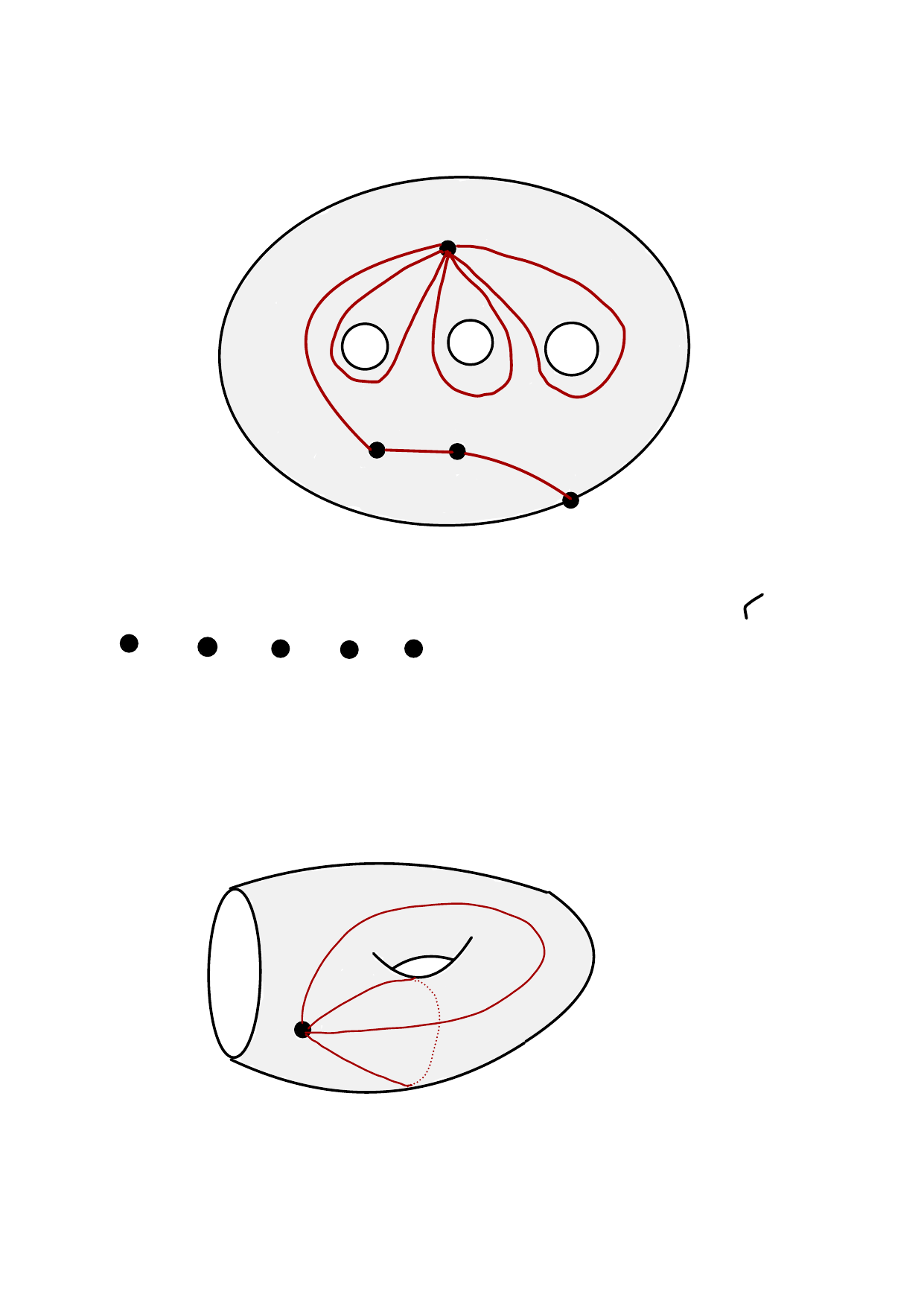}
 \end{center}
 The number of edges of $\Gamma$ is obtained from the Euler characteristic calculation $\chi(\Sigma)=\chi(\Gamma)=\#\V-\#\E_\Gamma$.
 \end{proof}
 
 Hence, under the assumptions of the proposition, the fundamental groupoid $\Pi(\Sigma,\V)$ is free on  
 $\#\V-\chi(\Sigma)$ generators. Of course, there are many choices of generators, in general.

 \begin{example}
 	Consider a 	cylinder (annulus) with three vertices on one boundary component, and one on the other.
 	The following pictures indicate choices of graphs $\Gamma$, each having four edges:
 	\begin{center}
 		\includegraphics[width=0.45\textwidth]{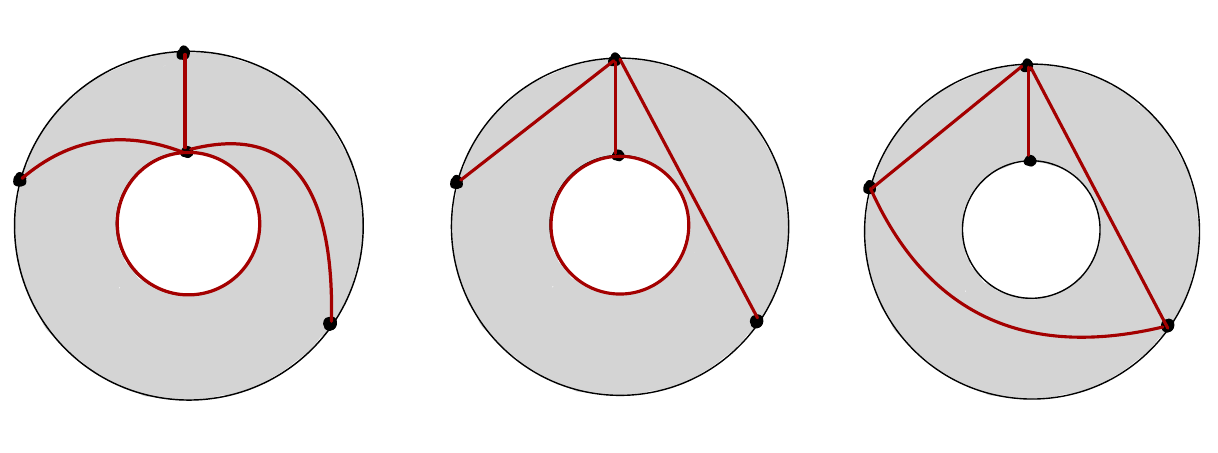}
 	\end{center}

 \end{example}

 Let us now turn our attention to the moduli space 
 	\[ \M_G(\Sigma,\V)=\Hom(\Pi(\Sigma,V),G)\]
with the action of $G^\V$.  By \eqref{eq:grap}, the choice of $\Gamma$, and of an orientation on $\Gamma$, 
gives a $G^\V$-equivariant identification, 
\begin{equation}\label{eq:ident}
\M_G(\Sigma,\V)\cong  G^{\E_\Gamma}\cong \underbrace{G\times \cdots\times G}_{\#\V-\chi(\Sigma)}.\end{equation}
 The resulting manifold structure on $\M_G(\Sigma,\V)$ does not depend on the choice of $\Gamma$. 
 (Exercise \ref{ex:2.3b}.) 
The space $\M_G(\Sigma,\V)$ may be identified with the moduli space of flat principal $G$-bundles $P\to \Sigma$, together with given trivialization (\emph{framing}) over the base points, $P|_\V\cong \V\times G$. 
	The equivalence relation is given by principal bundle isomorphisms intertwining the flat connection and the framing.   The group $G^\V$ acts by change of framing. 	(Exercise \ref{ex:2.1}.)
Any morphism of pairs $f\colon (\Sigma,\V)\to (\Sigma',\V')$ induces a smooth map of moduli space, 
$\M_G(f)\colon \M_G(\Sigma',\V')\to \M_G(\Sigma,\V)$, equivariant with respect to the group morphism 
$G^{\V'}\to G^\V$.

\subsection{Boundary holonomies, mapping class group}
From now on, we will usually make the following assumptions:
\begin{enumerate}
	\item[(A1)] \label{it:a1} Every component of $\Sigma$ has non-empty boundary. 
	\item[(A2)] \label{it:a2} The set $\V$ of vertices meets every component of $\p\Sigma$.
	\item[(A3)] \label{it:a3} The set $\V$   is contained in $\p\Sigma$.
\end{enumerate}
\begin{center}
	\includegraphics[width=0.40\textwidth]{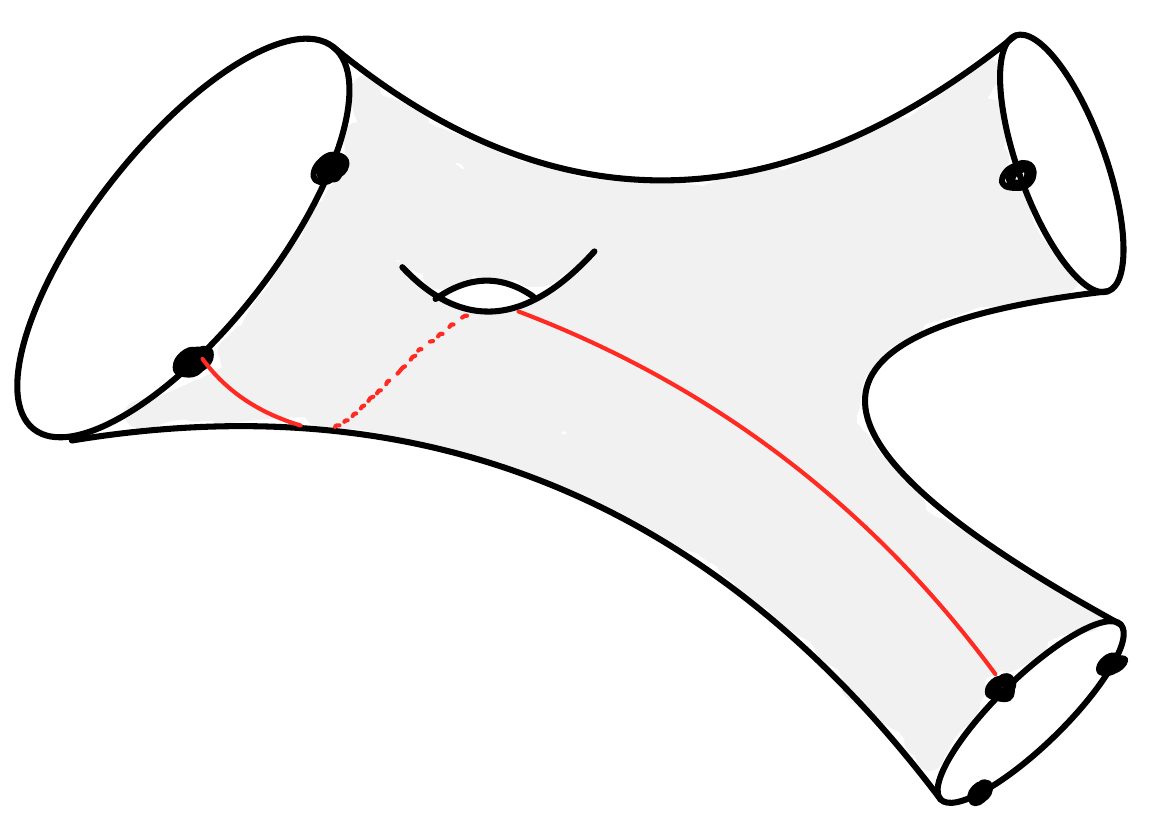}
\end{center}
Sometimes we will omit (A3), also allowing for a non-empty set $\V^{\on{int}}=\V\cap \on{int}(\Sigma)$ 
of `interior vertices'. 
By assumption (A2), the vertex set  divides the boundary into oriented boundary segments. 
Their homotopy classes a subset
\[ \E\subset  \Pi(\p\Sigma,\V\cap \p\Sigma),\]
called the \emph{boundary edges}. 
Hence, $\M_G(\p\Sigma,\V\cap \p\Sigma)\cong G^\E$, and the inclusion 
$(\p\Sigma,\V\cap \p\Sigma)\to (\Sigma,\V)$ determines  a $G^\V$-equivariant map 
\[ \Phi\colon \M_G(\Sigma,\V)\to G^\E, \] 
called the \emph{boundary holonomies}. 
In the theory to be developed below, the map $\Phi$ will play the role of a momentum map (analogous to the momentum maps in symplectic geometry). 

We define the \emph{mapping class group}
\begin{equation}
\label{eq:mappingclassgroup}
 \on{MCG}(\Sigma,\V)
\end{equation}
to be the group of isotopy classes of orientation preserving diffeomorphisms $f\colon \Sigma\to \Sigma$, preserving the 
boundary  $\p\Sigma$ and the set $\V$ of base points. Note that $f$ is not required to preserve the individual boundary components. 
By functoriality, the map of pairs $f\colon (\Sigma,\V)\to (\Sigma,\V)$ 
induces an 
action 
\[ \on{MCG}(\Sigma,\V)\circlearrowright \M_G(\Sigma,\V),\ [f]\mapsto \M_G(f^{-1})\]
on the moduli space. It combines with the $G^\V$-action into an action of the semidirect product 
\begin{equation}\label{eq:semidirect} G^\V\rtimes \on{MCG}(\Sigma,\V),\end{equation}
and the map $\Phi$ is equivariant for this action.

\subsection{Examples} \label{subsec:exa}
We shall denote by $\Sigma_g^r$ the connected, oriented surface of genus $g$ with $r$ boundary components. 

\begin{example}\label{ex:polygon}
	Let $\Sigma=\Sigma_0^1$ be the disk, and suppose 
	$\V$ consists of $n\ge 1$ points on the boundary. 
	\begin{center}
		\includegraphics[width=0.25\textwidth]{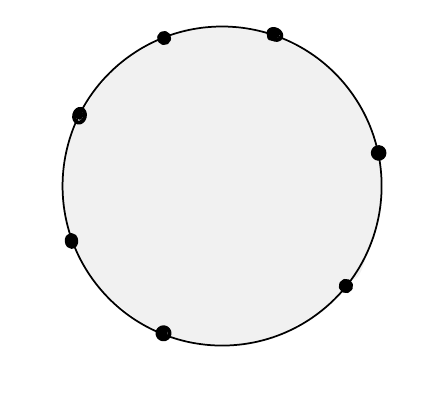}
	\end{center}
	Label the vertices clockwise by 
	$\vz_1,\ldots,\vz_n$, and let $\ez_i$ denote the edge from $\vz_{i+1}$ to  $\vz_{i}$ (with $\vz_{n+1}=\vz_1$). 
	The edges satisfy the relation $\ez_1\cdots \ez_n=1$ in $\Pi(\Sigma,\V)$. Omitting any edge from 
	 $\E=\{\ez_1,\ldots,\ez_n\}$ gives a set of generators of $\Pi(\Sigma,\V)$, and identifies 
	$\M_G(\Sigma,\V)\cong G^{n-1}$.
	The boundary holonomy map $\Phi\colon \M_G(\Sigma,\V)\to G^\E$ is injective, and  
	realizes the moduli space as the submanifold
	\[ \M_G(\Sigma,\V)=\{(a_1,\ldots,a_n)\in G^n|\ \prod a_i=e\},\]	 
	with $G^\V$ acting as 
	\begin{equation}\label{eq:actionngon}
	(h_1,\ldots,h_n).(a_1,\ldots,a_n)=(h_1a_1h_2^{-1},h_2a_2h_3^{-1},\ldots,h_na_nh_1^{-1}).\end{equation}
	The mapping class group $\on{MCG}(\Sigma,\V)=\Z_n$ acts by cyclic permutation on $G^n$.
\end{example}
\begin{example}
	Let $\Sigma=\Sigma_0^2$ be the cylinder (or annulus or 2-holed sphere), with one vertex on each boundary component. 
	\begin{center}
		\includegraphics[width=0.5\textwidth]{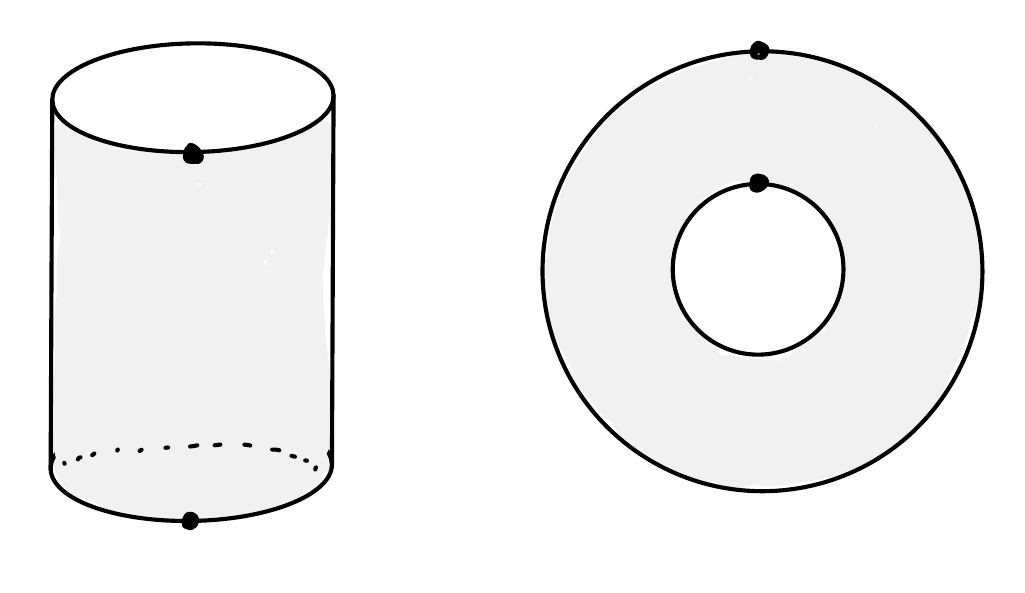}
	\end{center}
	Label the vertices by $\vz_1,\vz_2$. The fundamental groupoid $\Pi(\Sigma,\V)$ has a set of free generators 
	$\az,\bz$, where $\az$ is the class of the oriented boundary loop at $\vz_1$, and $\bz$ the class of a path from $\vz_1$ to $\vz_2$.
	Thus, 
	\[ \M_G(\Sigma,\V)\cong G^2\]
	where $(a,b)\in G^2$ are the holonomies along $\az,\bz$, respectively. 
	The action of $G^\V$ reads as 
	\[ (h_1,h_2).(a,b)=(h_1 a h_1^{-1},\ h_2 b a_1^{-1}).\]
	The holonomy around the second boundary component (based at $v_2$) is $b a^{-1} b^{-1}$, and so 
	\[ \Phi(a,b)=\big(a,b a^{-1} b^{-1}\big).\]
	For the mapping class group one finds 
	\[ \on{MCG}(\Sigma,\V)= \Z\rtimes \Z_2.\] 
	Here the $\Z_2$-generator interchanges the two boundary components; its action on the moduli space is 
	\[ (a,b)\mapsto (b a^{-1} b^{-1},b^{-1}).\]
	The generator of $\Z$ acts by a `Dehn twist' corresponding to a full turn of one of the 
	boundary components while leaving the other one fixed. The action on $\M_G(\V,\Sigma)$ is 
	\[ (a,b)\mapsto (a,ba)\]
	Note that this operation preserves $\Phi$.
\end{example}

\begin{example}
	Let $\Sigma=\Sigma_1^1$ be the 1-holed torus, and $\V$ a single base point on its boundary. 
	\begin{center}
		\includegraphics[width=0.4\textwidth]{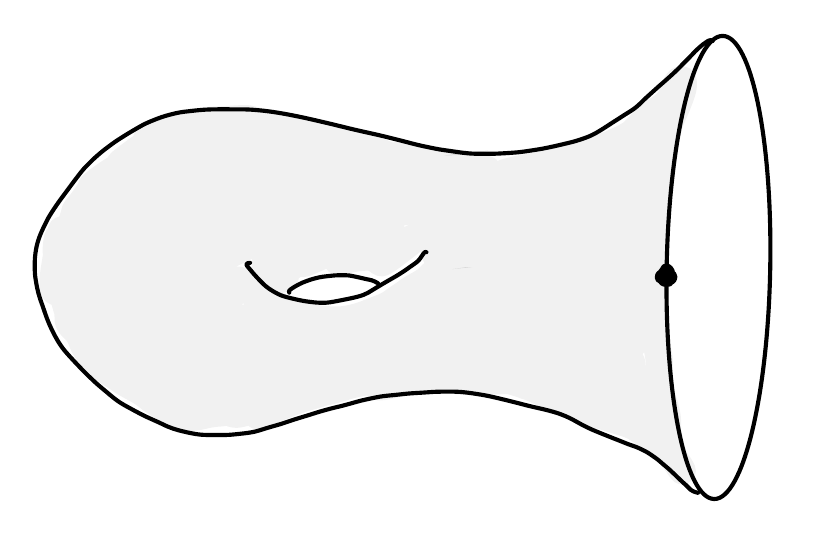}
	\end{center}
	As generators of the fundamental group(oid), we take lifts $\az,\bz\in \Pi(\Sigma,\V)$ of standard generators of the fundamental group of  the 2-torus $T^2=\Sigma_1^0$, chosen in such a way that the boundary loop becomes the group commutator,  $\az\bz\az^{-1}\bz^{-1}$. Let $a,b$ be the holonomies along $\az,\bz$, identifying  
	\[ \M_G(\Sigma,\V)\cong G^2,\]
	with $\Phi(a,b)=aba^{-1}b^{-1}$. The action of $G^\V=G$ is conjugation, $h.(a,b)=(\Ad_h a,\Ad_h b)$. 
	As shown in \cite[Section 3.6.4]{far:pri},  the mapping class group is 
	\[ \on{MCG}(\Sigma,\V)\cong \ca{B}_3,\] 
	the braid group on three strands: thus $\ca{B}_3$ has generators $S,T$ with a single relation 
	\[  STS=TST.\]
	The generators $S,T$ are represented as Dehn twists along the two generators $\az,\bz$; 
	the action on $\M_G(\Sigma,\V)$ is 
	\[ S\colon (a,b)\mapsto (a,ba),\ \ T\colon (a,b)\mapsto (ab^{-1},b).\]
	Note that these transformations preserve $\Phi$. 
	\item  \label{it:e}
	More generally, for $\Sigma=\Sigma_g^1$ with a single base point on its boundary, 
	take $\az_1,\bz_1,\ldots,\az_g,\bz_g$ to be standard generators of the fundamental group. 
	Then $\M_G(\Sigma,\V)\cong G^{2g}$, with 
	\[ \Phi(a_1,b_1\ldots,a_g,b_g)=\prod_{i=1}^g a_i b_i a_i^{-1} b_i^{-1}.\] 
	The group $\on{MCG}(\Sigma,\V)$ gets rather complicated for $g>1$. 
\end{example}

\subsection{Exercises}

\begin{exercise}\label{ex:2.0} 
Given a pair $(X,Y)$ as in Section \ref{subsec:general}, show that the moduli space of 
$(X\times I,Y\times \p I)$ (where $I$ is the unit interval) is naturally a groupoid 
\[ \M_G(X\times I,\,Y\times \p I)\rra \M_G(X,Y).\]
Show that this groupoid is isomorphic to the action groupoid $G^Y\times \M_G(X,Y)\rra \M_G(X,Y)$. 
\end{exercise}

\begin{exercise}[Relation with flat bundles] \label{ex:2.1}
Explain how $\M_G(\V,\Sigma)$ is the moduli space of flat $G$-bundles $P\to \Sigma$ with a trivialization (framing) over $\V$, up to principal bundle isomorphisms intertwining the flat connection and the framing. (Given $\kappa
\in \M_G(\Sigma,\V)$, construct $P$ as a quotient of $\wt{\Sigma}\times G$ where $\wt{\Sigma}$ is the space of  homotopy classes of paths with initial point in $\V$.) 
\end{exercise}

\begin{exercise}\label{ex:2.2}
Show that the Euler characteristic of surfaces with boundary  is uniquely characterized by the following three properties:
\begin{enumerate}
	\item[(E1)] If $\Sigma$ is obtained from a surface $\Sigma'$ by cutting along an embedded circle in the interior of $\Sigma'$, then $\chi(\Sigma)=\chi(\Sigma')$. 
	\item[(E2)] For disjoint unions, $\chi(\Sigma_1\sqcup \Sigma_2)=\chi(\Sigma_1)+\chi(\Sigma_2)$.
	\item[(E3)] For a disk $D$, $\chi(D)=1$.
\end{enumerate}
Use these properties to verify $\chi(\Sigma_g^r)=2-2g-r$. 	
\end{exercise}

\begin{exercise}\label{ex:2.3}
Show that a connected surface with non-empty boundary retracts onto a wedge of  $1-\chi(\Sigma)$
circles. (We suggest using gluing diagrams.) Use this to prove Proposition \ref{prop:free}. 
\end{exercise}

\begin{exercise}\label{ex:2.3b}
Let $G$ be a Lie group, and  $\Pi\rra \V$ a finitely generated free groupoid. 
Any choice $S\subset \Pi$ of generators defines an inclusion $\Hom(\Pi,G)\hra G^S$, which is a 
bijection when $S$ is a set of free generators. Show that the resulting manifold structure 
on $\Hom(\Pi,G)$ does not depend on the choice of $S$. (One possibility, given two sets of generators, is to consider their union.) 
\end{exercise}

\begin{exercise} Consider the 1-punctured torus, as in  Example \ref{subsec:exa}\eqref{it:c}. Show that the Dehn twist around the boundary is expressed in terms of generators of the braid group as $(STS)^{-4}$. Verify that its action on 
the moduli space is conjugation by 	$\Phi(a,b)$. 
\end{exercise}

\begin{exercise}
For every boundary component $C$ of $\Sigma$, the group $\on{MCG}(\Sigma,\V)$ contains the `partial Dehn twists' represented by a diffeomorphism which is trivial outside a collar neighborhood of $C$ and acts on $C\cap \V$ by 
cyclic permutation. Describe its action on $\Pi(\Sigma,\V)$. Use the result to describe the action on 
$\M_G(\Sigma,\V)$ in terms of the components $\Phi_\ez$, for $\ez$ an edge in $C$. (The answer will depend on the size and direction of the twist.)
\end{exercise}\bigskip

\section{Two-forms on $\M_G(\Sigma,\V)$}\label{sec:2form}
Throughout this section, we consider pairs $(\Sigma,\V)$ of a surface with a collection  of vertices, satisfying (A1), (A2). We will show that a given metric on the Lie algebra of $G$ determines a 2-form on $\M_G(\Sigma,\V)$, with nice properties. If (A3) is satisfied as well, this 2-form is `minimally degenerate'.

\subsection{The 2-form}
We shall assume from now that the Lie algebra 
\[ \g=\on{Lie}(G)\]
comes equipped with an $\Ad_G$-invariant nondegenerate symmetric bilinear form $\cdot$, referred to as a 
\emph{metric}. For example, if $G$ is semi-simple we may take  the Killing form of $\g$. The metric determines 
a \emph{Cartan 3-form}
\begin{equation}\label{eq:eta}
 \eta=\f{1}{12} \theta^L\cdot [\theta^L,\theta^L]\in \Omega^3(G)
\end{equation}
(where $\theta^L,\theta^R\in \Omega^1(G,\g)$ are the Maurer-Cartan forms). Using that $\theta^R=\Ad_g\theta^L$ 
we see that this 3-form  is invariant under both left and right translations, and consequently (Exercise \ref{ex:3.1}) is closed: 
\[ \d\eta=0.\]
For $\xi\in \g^\V=\on{Map}(\V,\g)$, let $\xi_{\M_G(\Sigma,\V)}$ 
denote the generating vector field for  the $G^\V$-action on $\M_G(\Sigma,\V)$. 
The holonomy along the edge $\ez$ transforms according to (cf.~ \eqref{eq:holtra})
\begin{equation}\label{eq:action1} g\mapsto h_{\tz(\ez)}\ g\  h_{\sz(\ez)}^{-1}.\end{equation}
This shows that for each component $\Phi_\ez\colon \M_G(\Sigma,\V)$ of the map $\Phi$, 
\[ \xi_{\M_G(\Sigma,\V)}\sim_{\Phi_\ez}
\xi_{\sz(\ez)}^L-\xi_{\tz(\ez)}^R
\]
for $\xi\in \g^\V$, where the superscripts indicate left/right invariant vector fields, 
and
where the tilde symbol signifies related vector fields. 
%
\begin{tcolorbox}
	\begin{theorem} \label{th:2form} \cite{al:mom,lib:mod}
		Suppose $(\Sigma,\V)$ satisfies conditions (A1),\,(A2). 
		There is a canonically defined 
		2-form 
		\[ \omega\in \Omega^2(\M_G(\Sigma,\V) ),\] 
		invariant under the action of 
		$G^\V$ and of the mapping class group $\on{MCG}(\Sigma,\V)$, 
		with the following properties:\medskip
		\begin{enumerate}
			\item\label{it:a} $ \d\omega=-\sum_{\mathsf{e}\in \ca{E}} \Phi_{\mathsf{e}}^*\eta.$ \smallskip
			\item\label{it:b}
			$\iota(\xi_{\M_G(\Sigma,\V)})\omega=-\hh \sum_{\ez\in \ca{E}} 
			\Phi_{\ez}^*(\theta^R\cdot\xi_{\tz(\ez)}+\theta^L\cdot\xi_{\sz(\ez)})
			$\ \ \ 
			for $\xi\in\g^\V$. \medskip
		\end{enumerate}
	   
	If condition (A3) is satisfied, then this 2-form also has the \emph{minimal degeneracy} property 
	\vskip.08in
	\begin{enumerate}\setcounter{enumi}{2}
		\item \label{it:c} $\ker(\omega)\cap \ker(T\Phi)=\{0\}.$
	\end{enumerate}
	\end{theorem}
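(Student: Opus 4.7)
The plan is to give a direct cut-and-paste construction of $\omega$ via gluing diagrams, verifying (a) and (b) by tracking how the 2-form transforms under gluing operations, and to defer (c) to the Dirac-geometric framework of Section \ref{sec:dirac}.

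\textbf{Step 1: Polygon model.} I would first treat the case where $\Sigma$ is an $n$-gon. By Example \ref{ex:polygon}, $\M_G(\Sigma,\V)$ embeds in $G^n$ as the level set $\{a_1\cdots a_n = e\}$. On $G^n$ I would write down a distinguished 2-form built as an iterated fusion of $n$ copies of $G$, using the primitive 2-form $\beta \in \Omega^2(G\times G)$ that trivialises the ``cocycle''
\[
\d\beta = -\on{Mult}^*\eta + \pi_1^*\eta + \pi_2^*\eta
\]
for the Cartan 3-form under multiplication. Well-definedness of the iterated fusion (independence of bracketing) follows from this cocycle identity. A direct Maurer--Cartan calculation on the ambient $G^n$ then produces the required formula for the differential and the contraction identity with generating vector fields, and restriction to $\M_G(\Sigma,\V)$ yields (a) and (b) in this model case.

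\textbf{Step 2: Gluing to general surfaces.} Next, for any $(\Sigma,\V)$ satisfying (A1)--(A2), I would choose a gluing diagram presenting $\Sigma$ as the quotient of a polygon (or a disjoint union of polygons) by an identification of certain edge pairs. Functoriality of $\M_G(-,-)$ realizes $\M_G(\Sigma,\V)$ as a submanifold of the polygon moduli space, cut out by the condition that paired edges carry inverse holonomies, and I define $\omega$ as the pullback of the polygon 2-form. In property (a) the $\eta$-contributions from paired boundary edges cancel because those edges appear with opposite orientation, leaving exactly the sum over the remaining boundary edges of $(\Sigma,\V)$; property (b) transports similarly, since the contraction terms corresponding to paired edges combine and then vanish on the constrained submanifold. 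Independence of the chosen diagram would be established by checking invariance under the elementary moves (subdivision of an edge, flipping a diagonal, inserting or removing a bigon) that connect any two gluing presentations of the same $(\Sigma,\V)$. Equivariance under $G^\V$ and $\on{MCG}(\Sigma,\V)$ is then automatic from naturality.

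\textbf{Step 3: Minimal degeneracy.} The genuine obstacle is part (c). A frontal attack from the explicit formula seems hopeless, so the plan is to reinterpret the pair $(\omega,\Phi)$ as a backward Dirac morphism from $\M_G(\Sigma,\V)$ to $G^\E$ endowed with the Cartan--Dirac structure, and then to invoke a cross-section theorem for Dirac structures (developed in Section \ref{sec:dirac}), which identifies $\ker(\omega)\cap\ker(T\Phi)$ with a linear preimage of the kernel of the Cartan--Dirac structure on $G^\E$---and the latter is trivial. Condition (A3) enters at exactly this point: it guarantees that every vertex lies on a boundary edge, so that every generator of the $G^\V$-action is ``seen'' by $\Phi$, which is what supplies the transversality needed for the cross-section theorem to apply.
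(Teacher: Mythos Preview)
Your Steps 1 and 2 track the paper's strategy closely: define $\omega$ on polygons via the iterated $\beta$-primitive (this is exactly \v{S}evera's product $(g_1,0)\bullet\cdots\bullet(g_n,0)$ of Proposition~\ref{prop:severa}), then pull back along a gluing pattern, with the paired-edge cancellations giving (a) and (b). One correction: your list of elementary moves (edge subdivision, diagonal flip, bigon insertion) is not the right set for comparing gluing patterns with \emph{fixed} vertex set $\V$---edge subdivision changes the vertex set, and bigon moves are not standard here. The paper instead observes that any two gluing patterns for $(\Sigma,\V)$ are connected by repeatedly cutting a single polygon along a diagonal (and the inverse gluing), and then \v{S}evera's associative product makes invariance a one-line computation: inserting $(c,0)\bullet(c^{-1},0)=(e,0)$ into the product does nothing.

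Step 3 has a genuine logical gap. Saying that $(\omega,\Phi)$ \emph{is} a (strong) Dirac morphism to $(G^\E,A)$ is precisely the conjunction of (a), (b), and (c); you cannot ``reinterpret'' the data as such a morphism and then read off (c)---that is circular. Nor is the mechanism ``the kernel of the Cartan--Dirac structure is trivial''; $A$ is a Lagrangian subbundle, not a map, and no such statement does the work. The paper's actual argument runs as follows. For the disjoint union of polygons $(\wh\Sigma,\wh\V)$, property (c) is automatic because $\wh\Phi$ is an embedding, so $\T_{\wh\omega}\wh\Phi$ \emph{is} a Dirac morphism into $((\T_\eta G)^{\wh\E},\wh A)$. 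One then restricts to the submanifold $Q\subset G^{\wh\E}$ cut out by the gluing constraints (inverse holonomies on paired edges); transversality of $Q$ to the anchor of $\wh A$ lets the Cross-Section Theorem (Proposition~\ref{prop:cross}) produce a Dirac morphism from $\M_G(\Sigma,\V)$ into $(\T_{\eta_Q}Q,\,j^!\wh A)$. A separate lemma (Lemma~\ref{lem:remains}) then shows that the projection $\pi\colon Q\to G^\E$ is itself a Dirac morphism $(\T_{\eta_Q}Q,\,j^!\wh A)\dasharrow((\T_\eta G)^\E,A)$; composing gives the Dirac morphism $\T_\omega\Phi$ for $(\Sigma,\V)$, and (c) is now a consequence rather than an input. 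Condition (A3) is needed because an interior vertex $\vz$ contributes a generating vector field lying in $\ker\omega\cap\ker T\Phi$ (it appears in no boundary edge, so both sides of (b) vanish while the vector field itself does not), which would violate (c).
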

\end{tcolorbox}
By \emph{canonical}, we mean that the 2-form does not depend on additional choices; it is naturally associated with the 
pair $(\Sigma,\V)$.  The proof of Theorem \ref{th:2form} will take up the remainder of this section. We start out by 
providing some basic properties of the 3-form $\eta$ (Subsection \ref{subsec:eta}). We then give an explicit description 
of the 2-form $\omega$  for the special case of an $n$-gon  (Subsection \ref{subsec:polygon}). General surfaces are 
reduced to the case of $n$-gons via  gluing patterns (Subsection \ref{subsec:gluingpattern}).

\begin{tcolorbox}
\begin{proposition}\label{prop:modulirank}
If conditions (A1),(A2),(A3) are satisfied, then $\ker(\omega)$ has the explicit description 
\begin{equation}\label{eq:explicitkernel} 
	\ker(\omega)|_\kappa=\{\xi_{\M_G(\Sigma,\V)}|_\kappa|\  \forall\ez\in\E\colon
\xi_{\tz(\ez)}+\Ad_{\Phi(\kappa)_{\sz(\ez)}}\xi_{\sz(\ez)}=0
\}.\end{equation}
Furthermore, there is a canonical isomorphism, for all $\kappa\in \M_G(\Sigma,\V)$,  
\begin{equation}\label{eq:rankfact}
\on{ran}(T\Phi|_\kappa)\cong \on{ann}((\g^\V)_\kappa)
\end{equation}
where $(\g^\V)_\kappa$ is the stabilizer algebra for the $\g^\V$-action at $\kappa$. 	
\end{proposition}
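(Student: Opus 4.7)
My plan is to derive both parts of the proposition in tandem from the moment-map identity Theorem~\ref{th:2form}(b) together with the minimal-degeneracy property (c). First, using the Cartan identity $\theta^L|_g=\Ad_{g^{-1}}\theta^R|_g$, I would repackage (b) as $\iota(\xi_\M)\omega|_\kappa = T\Phi^*\alpha(\xi)$, where
\[ \alpha(\xi)_\ez:=-\tfrac12(\theta^R\cdot\beta_\ez(\xi))|_{g_\ez}\in T^*_{g_\ez}G,\qquad \beta_\ez(\xi):=\xi_{\tz(\ez)}+\Ad_{g_\ez}\xi_{\sz(\ez)}, \]
with $g_\ez=\Phi_\ez(\kappa)$. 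Writing $\rho\colon\g^\V\to T_\kappa\M$, $\rho(\xi)=\xi_\M(\kappa)$, this identity reads $\omega(\rho(\xi),v)=\langle\alpha(\xi),T\Phi(v)\rangle$ for all $v\in T_\kappa\M$. Setting $V:=\ker\alpha=\{\xi:\beta_\ez(\xi)=0\ \forall \ez\}$, the ``$\supseteq$'' inclusion of~\eqref{eq:explicitkernel} is then immediate: for $\xi\in V$ the right-hand side vanishes, so $\rho(V)\subseteq\ker(\omega)$.

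For~\eqref{eq:rankfact}, I would introduce the map
\[ \Psi\colon\on{ran}(T\Phi)\to(\g^\V)^*,\qquad \Psi(T\Phi(v))(\xi):=\omega(\rho(\xi),v), \]
well-defined by the identity above (the right side depends on $v$ only through $T\Phi(v)$). For $\xi\in(\g^\V)_\kappa$ we have $\rho(\xi)=0$ and the pairing vanishes, so $\on{im}(\Psi)\subseteq\on{ann}((\g^\V)_\kappa)$. The proposition then reduces to proving that $\Psi$ is a bijection onto $\on{ann}((\g^\V)_\kappa)$ and simultaneously that $\rho\colon V\to\ker(\omega)$ is surjective.

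I would close the loop via minimal degeneracy and a dimension count. By (c), $T\Phi|_{\ker\omega}$ is injective, yielding $\dim\ker(\omega)\leq\dim\on{ran}(T\Phi)$. Next, $\rho\colon V\to\ker(\omega)$ is injective: a $\xi\in V\cap(\g^\V)_\kappa$ must satisfy both $\xi_{\tz(\ez)}+\Ad_{g_\ez}\xi_{\sz(\ez)}=0$ and $\xi_{\tz(\ez)}-\Ad_{g_\ez}\xi_{\sz(\ez)}=0$, forcing $\xi_\vz=0$ for every vertex $\vz$ at either end of a boundary edge; assumptions (A2)--(A3) guarantee that this is every vertex of $\V$. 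Combining with the identities $\dim V=\dim\g^\V-\on{rank}(\alpha)$ and $\dim\on{ann}((\g^\V)_\kappa)=\dim\g^\V-\dim(\g^\V)_\kappa$, the chain
\[ \dim V\leq\dim\ker(\omega)\leq\dim\on{ran}(T\Phi)\leq\dim\on{ann}((\g^\V)_\kappa) \]
would force equality throughout once the endpoints agree, which in turn pins down both $\rho(V)=\ker(\omega)$ and the bijectivity of $\Psi$.

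The hard step is the missing dimension identity $\on{rank}(\alpha)=\dim\on{im}(\rho)$, equivalently non-degeneracy of the pairing $\on{im}(\alpha)\times\on{ran}(T\Phi)\to\R$ extracted from the moment map. This is not formal: the ``$+$''-condition cutting out $V$ and the ``$-$''-condition cutting out $(\g^\V)_\kappa$ (which involves holonomies along \emph{all} generators of $\Pi(\Sigma,\V)$, not only boundary edges) are not linearly complementary. My strategy would be to verify the proposition directly in the base case of an $n$-gon (Example~\ref{ex:polygon}), where $\Phi$ is an embedding and all four subspaces can be computed by hand in the $\theta^R$-trivialization, and then propagate both~\eqref{eq:explicitkernel} and~\eqref{eq:rankfact} through the polygon-gluing construction of $\omega$ developed in the rest of Section~\ref{sec:2form}, checking at each edge-identification that the kernel description and the rank identity survive.
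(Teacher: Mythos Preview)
Your chain argument has a fatal flaw: the ``endpoints agree'' identity $\dim V=\dim(\g^\V)_\kappa$ (equivalently $\on{rank}(\alpha)=\dim\on{im}(\rho)$) that you identify as the hard step is simply \emph{false} in general. Take the triangle $(\Sigma_0^1,\#\V=3)$ and any point $\kappa=(a_1,a_2,a_3)$ with $a_1a_2a_3=e$. The stabilizer of the $G^3$-action is always a diagonal copy of $G$, so $\dim(\g^\V)_\kappa=\dim\g$. But the condition $\xi_i+\Ad_{a_i}\xi_{i+1}=0$ (indices mod $3$) cycles to $\xi_1=-\xi_1$, forcing $V=0$. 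Thus the endpoints of your chain are $0$ and $2\dim\g$, and no dimension count can close the gap. (For even $n$ the identity happens to hold, which may have misled you; the point is that \eqref{eq:explicitkernel} and \eqref{eq:rankfact} are \emph{independent} statements, not two ends of a single chain.) Separately, your third inequality $\dim\on{ran}(T\Phi)\le\dim\on{ann}((\g^\V)_\kappa)$ is never justified: it would need $\Psi$ injective, but you only propose to deduce bijectivity of $\Psi$ \emph{from} the chain, which is circular.

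The missing structural ingredient is the Lagrangian property $A=A^\perp$ of the Dirac structure $A=G^\E\times\g^\V\subset(\T_\eta G)^\E$. The paper's argument (Propositions~\ref{prop:kernel} and~\ref{prop:range}) uses it as follows. For \eqref{eq:explicitkernel}: given $v\in\ker(\omega)|_\kappa$ with image $w=T\Phi(v)$, the moment-map identity you wrote gives $\l (w,0),\sigma(\xi)\r=0$ for all $\xi\in\g^\V$, i.e.\ $(w,0)\in A^\perp=A$. Hence $(w,0)=\sigma(\xi_0)|_{\Phi(\kappa)}$ for some $\xi_0\in\g^\V$ with vanishing $1$-form part (that is, $\xi_0\in V$), and then $v-\rho(\xi_0)\in\ker\omega\cap\ker T\Phi=0$ by minimal degeneracy. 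For \eqref{eq:rankfact}: the Lagrangian condition gives $A\cap T^*G^\E=\on{ann}(\a_A(A))$, and since $\a_A(A)|_{\Phi(\kappa)}$ is the tangent to the $G^\V$-orbit, equivariance of $\Phi$ yields $\a_A(A)\subset\on{ran}(T\Phi)$; hence every $\nu\in\on{ann}(\on{ran}(T\Phi))$ satisfies $(0,\nu)\in A$ and is $\T_\omega\Phi$-related to some $v\in\ker T\Phi\cap\ker\omega=0$, placing it in the stabilizer $\k_\kappa=(\g^\V)_\kappa$. In short, the Lagrangian self-duality of $A$ is what replaces your attempted dimension identity; propagation through polygon gluings cannot supply it.
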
	
\end{tcolorbox}
\begin{proof}
The inclusion $\supseteq$ in \eqref{eq:explicitkernel} follows from \eqref{it:b}; the opposite inclusion is
best proved using Dirac geometry; see Proposition \ref{prop:kernel} below. The isomorphism 
\eqref{eq:rankfact} is equivalent to the claim that $\on{ann}(\on{ran}(T\Phi|_\kappa))\cong (\g^\V)_\kappa$. In fact, we will see 
that this space of covectors is spanned by all 
$\sum_{\ez\in \ca{E}} g_\ez^*(\theta^R\cdot\xi_{\tz(\ez)}+\theta^L\cdot\xi_{\sz(\ez)})
$ with $\xi\in (\g^\V)_\kappa$. Once again, the proof becomes more transparent  using Dirac geometry, 
where it follows from a general result, Proposition \ref{prop:range} below. 
\end{proof}
Equation \eqref{eq:rankfact} shows, in particular, that the rank of $\Phi$ at any given point $\kappa$ equals 
the codimension of the stabilizer of the $G^\V$-action at that point. As a special case, we recover the following result of Goldman 
\cite[Section 3.7]{gol:sym}:

\begin{tcolorbox}
\begin{corollary}\label{cor:goldman}
Let $G$ be a Lie group whose Lie algebra $\g$ admits an invariant metric. For all $\gz\ge 1$,  the rank of the products-of-commutator-map 
\[ \Phi\colon G^{2\gz}\to G,\ (a_1,b_1,\ldots,a_\gz,b_\gz)\to \prod_{i=1}^{\gz} a_ib_ia_i^{-1}b_i^{-1}\]
at a point $(a_1,b_1\ldots,a_\gz,b_\gz)$ equals the codimension of its stabilizer  of  
 under conjugation. 
	\end{corollary}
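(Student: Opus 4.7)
The plan is to deduce this corollary as an immediate specialization of Proposition \ref{prop:modulirank} to the 1-holed surface of genus $\gz$. First I would take $\Sigma=\Sigma_\gz^1$, the compact oriented surface of genus $\gz$ with a single boundary component, and $\V=\{v\}$ a single vertex on $\p\Sigma$. All three hypotheses (A1), (A2), (A3) are then satisfied. Choosing standard generators $\az_1,\bz_1,\ldots,\az_\gz,\bz_\gz$ of the fundamental groupoid $\Pi(\Sigma,\V)$, lifted so that the unique oriented boundary loop based at $v$ is expressed as the product of commutators $\az_1\bz_1\az_1^{-1}\bz_1^{-1}\cdots \az_\gz\bz_\gz\az_\gz^{-1}\bz_\gz^{-1}$, identifies $\M_G(\Sigma,\V)\cong G^{2\gz}$. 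Under this identification, the single boundary edge yields precisely the boundary holonomy map
\[ \Phi(a_1,b_1,\ldots,a_\gz,b_\gz)=\prod_{i=1}^{\gz} a_ib_ia_i^{-1}b_i^{-1}, \]
matching the map in the statement of the corollary.

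Next I would invoke the rank formula \eqref{eq:rankfact} from Proposition \ref{prop:modulirank}, which supplies a canonical isomorphism $\on{ran}(T\Phi|_\kappa)\cong \on{ann}((\g^\V)_\kappa)$. Since $\V$ consists of a single point, $G^\V=G$ acts by simultaneous conjugation on $G^{2\gz}$, and the stabilizer algebra $(\g^\V)_\kappa=\g_\kappa$ is exactly the infinitesimal stabilizer of $\kappa=(a_1,b_1,\ldots,a_\gz,b_\gz)$ under this conjugation action. Passing to dimensions and using that $\on{ann}((\g^\V)_\kappa)$ has dimension $\dim\g-\dim\g_\kappa$, we obtain
\[ \on{rank}(T\Phi|_\kappa)=\dim\g-\dim\g_\kappa, \]
which is precisely the codimension of the conjugation stabilizer of the tuple, as claimed.

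In summary, once Proposition \ref{prop:modulirank} is available, the corollary is just a matter of unwinding the definitions for $(\Sigma,\V)=(\Sigma_\gz^1,\{v\})$. The real substance lies in the canonical isomorphism \eqref{eq:rankfact}, whose proof is deferred to Proposition \ref{prop:range} and relies on the Dirac-geometric framework developed later in the paper; that identification—showing that the annihilator of $\on{ran}(T\Phi|_\kappa)$ is spanned by the covectors $\sum_{\ez\in \E} \Phi_\ez^*(\theta^R\cdot\xi_{\tz(\ez)}+\theta^L\cdot\xi_{\sz(\ez)})$ with $\xi$ in the stabilizer algebra—is where I would expect the main technical obstacle to lie if one tried to prove Goldman's statement by a direct calculation of the differential of the products-of-commutators map without the general machinery.
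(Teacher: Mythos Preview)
Your proposal is correct and follows exactly the paper's own argument: specialize to $(\Sigma,\V)=(\Sigma_\gz^1,\{\vz\})$, identify $\Phi$ with the product-of-commutators map, and read off the rank from the isomorphism \eqref{eq:rankfact} in Proposition \ref{prop:modulirank}. The paper's proof is more terse but structurally identical, and your commentary on where the substance lies (the Dirac-geometric Proposition \ref{prop:range}) is accurate.
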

\end{tcolorbox}
\begin{proof}
Let $\Sigma=\Sigma_\gz^1$ be the surfaces of genus $\gz$ with one boundary component, and let $\V=\{\vz\}$ 
consist of a single point on $\p\Sigma$. Then $\Phi$ is just the momentum map for $\M_G(\Sigma,\V)$, and 
the result follows from \eqref{eq:rankfact}.  	
\end{proof}

Note that the group unit is never a regular value of the product-of-commutators map; for example, 
at $(e,\ldots,e)\in \Phi^{-1}(e)$ the rank is $0$.

\begin{remark}
If $G$ is non-simply connected, $\Phinv(e)$ can have several connected components, and it may happen that $\Phi$ has maximal rank on some of those components. For example, if $G=\on{PSL}(2,\R)$ the map 
$\Phi$ has maximal rank on all connected components except for the one containing $(e,\ldots,e)$. 
\end{remark}

\subsection{Properties of the  Cartan 3-form}\label{subsec:eta}
We shall consider the Cartan 3-form $\eta$ alongside the following 2-form on $G\times G$, 
\begin{equation}\label{eq:beta}
 \beta=\hh \pr_1^*\theta^L\cdot \pr_2^*\theta^R.\end{equation}
Here $\pr_1,\pr_2$ are the two projections. 

\begin{tcolorbox}
	\begin{proposition}\label{prop:mult}
		\begin{enumerate}
			\item The pullback of $\eta$ under group inversion $\on{Inv}\colon G\to G$ satisfies 
			\[ \on{Inv}^*\eta=-\eta.\]
			\item 	The pullback of $\eta$ under group multiplication $\on{Mult}\colon G\times G\to G$ satisfies 
			\[ \Mult^*\eta=\pr_1^*\eta+\pr_2^*\eta-\d\beta.\]
			
		\end{enumerate}
	\end{proposition}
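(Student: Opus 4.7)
For part (a), the plan is to reduce everything to the single identity $\on{Inv}^*\theta^L = -\theta^R$, which is a routine computation from $\theta^L = g^{-1}\d g$ and $\d(g^{-1}) = -g^{-1}(\d g)g^{-1}$. Together with the observation that $\eta$ admits an equivalent description in terms of the right Maurer-Cartan form, namely $\eta = \tfrac{1}{12}\theta^R\cdot[\theta^R,\theta^R]$ (obtained from $\theta^R = \Ad_g\theta^L$ and the $\Ad$-invariance of the metric), the pullback under inversion becomes an immediate sign count: $\on{Inv}^*\eta = \tfrac{1}{12}(-\theta^L)\cdot[-\theta^L,-\theta^L] = -\eta$.

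For part (b), I would begin from the standard pullback formula
\[
 \Mult^*\theta^L \,=\, \Ad_{g_2^{-1}}\pr_1^*\theta^L + \pr_2^*\theta^L,
\]
obtained by differentiating $(g_1,g_2)\mapsto g_1 g_2$. Writing the right-hand side as $A + B$ with $A = \Ad_{g_2^{-1}}\pr_1^*\theta^L$ and $B = \pr_2^*\theta^L$, I would substitute into $\eta = \tfrac{1}{12}\theta^L\cdot[\theta^L,\theta^L]$ and expand, separating the purely-$A$ and purely-$B$ \emph{diagonal} terms from the four \emph{cross} terms. The diagonal terms reduce, by $\Ad$-equivariance of the bracket and $\Ad$-invariance of the metric, to exactly $\pr_1^*\eta + \pr_2^*\eta$.

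The cross terms need more care. The key algebraic inputs are: (i) for $\g$-valued $1$-forms $X,Y$, one has $[X,Y] = [Y,X]$ (the antisymmetries of $\wedge$ and of the Lie bracket cancel); (ii) the cyclic identity $\alpha\cdot[\beta,\gamma] = [\alpha,\beta]\cdot\gamma$ for $\g$-valued $1$-forms, a formal consequence of $\ad$-invariance of the metric. Using (i) and (ii), the four cross terms collapse into two pairs of equal entries, and the $\Ad_{g_2^{-1}}$-factor inside $A$ can be absorbed by conjugating the entire pairing by $\Ad_{g_2}$, converting the surviving factors of $B$ into $\rho_2 := \pr_2^*\theta^R$ while exposing $\pr_1^*\theta^L$.

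The final step of the plan is to recognize the resulting expression as $-\d\beta$. Applying the Maurer-Cartan equations $\d\theta^L = -\tfrac12[\theta^L,\theta^L]$ and $\d\theta^R = \tfrac12[\theta^R,\theta^R]$ to $\beta = \tfrac12\,\pr_1^*\theta^L\cdot\pr_2^*\theta^R$ produces a sum of the same two terms, matched after one last use of the cyclic identity $\alpha\cdot[\alpha,\rho] = [\alpha,\alpha]\cdot\rho$. The main obstacle is purely combinatorial: juggling the signs from the exterior algebra of $\g$-valued forms, the $\Ad$-factors introduced by the multiplication formula, and the precise placement of brackets, so that the six cross terms reorganize into exactly $-\d\beta$ rather than a spurious closed remainder. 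Conceptually the argument is transparent; the work lies in careful bookkeeping.
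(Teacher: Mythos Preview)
Your proposal is correct and follows exactly the approach the paper sketches: the paper's proof is a one-line remark pointing to $\on{Inv}^*\theta^L=-\theta^R$, the pullback formula $\Mult^*\theta^L=\Ad_{g_2}^{-1}\pr_1^*\theta^L+\pr_2^*\theta^L$, and the Maurer-Cartan equation, and you have simply written out the bookkeeping that this sketch entails. (The minor slip between ``four'' and ``six'' cross terms is harmless; after using $[A,B]=[B,A]$ there are four distinct cross terms, six with multiplicity.)
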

\end{tcolorbox}

\begin{proof}
This is a straightforward verification, using 
\[\on{Inv}^*\theta^L=-\theta^R,\ \ \ \ \Mult^*\theta^L=\Ad_{g_2}^{-1}\pr_1^*\theta^L+\pr_2^*\theta^L\]
(where we write elements of $G\times G$ as $(g_1,g_2)$) and the Maurer-Cartan structure equation
$\d\theta^L+\hh[\theta^L,\theta^L]=0$. 
\end{proof}

The 3-form $\eta\in \Omega^3(G)$ and 2-form $\beta\in \Omega^2(G\times G)$ appear in the 
context of moduli spaces of $G$-bundles in Chern-Simons theory and the Polyakov-Wiegman formula 
\cite{pol:gol}; their significance for constructing the symplectic structure on moduli spaces of flat $G$-bundles 
was recognized by Weinstein in \cite{we:symod}. These forms may be interpreted in terms of the Bott-Shulman double complex 
\cite{bot:che,je:gr} for the simplicial realization of the classifying space $BG$. One has 
\[ \Omega^{i,j}(BG)=\bigoplus_{i+j=k}\Omega^i(G^j)\]
with total differential $\d+(-1)^i\delta$, where $\d$ is the de Rham differential (raising index $i$) and $\delta$ is the 
simplicial differential (raising index $j$), given on elements of bidegree $(i,j)$ by $\delta=\sum_{i=0}^j (-1)^i \partial_i^*$
with 
\[ \partial_i(g_1,g_2,\ldots,g_j)=\begin{cases}
(g_2,\ldots,g_j)& i=0,\\
(g_1,\ldots,g_ig_{i+1},\ldots,g_j) &0<i<j,\\
(g_1,\ldots,g_{j-1})&i=j.
\end{cases}
\]
The sum $\eta+\beta$ is a cocycle of total degree $4$ for the total complex. 
This amounts to the equations  
\[ \d\eta=0,\ \ \d\beta=\delta\eta,\ \ \delta\beta=0.\]
%
The cohomology class of $(\eta,\beta)$ for the total differential is the 1st Pontrjagin class of the classifying bundle $EG\to BG$.

\begin{remark}
	In shifted geometry parlance, $\eta+\beta$ is a 2-shifted 2-form on the stack $G\rra \pt$. For discussion, see e.g. Alvarez
	\cite{alv:tra}. 
\end{remark}

Using Proposition \ref{prop:mult}, one may check the following result, which will be useful in our construction of 
the 2-form on the moduli space. 

\begin{tcolorbox}
	\begin{proposition}[\v{S}evera \cite{sev:mod}] \label{prop:severa}
		Given any manifold $M$, the space 
		\[ C^\infty(M,G)\times \Omega^2(M)\] has a group structure with product 
		\[ (\Phi_1,\omega_1)\bullet (\Phi_2,\omega_2)=
		\left(\Phi_1\Phi_2,\omega_1+\omega_2-(\Phi_1,\Phi_2)^*\beta\right)\]
		and inverse  $(\Phi,\omega)^{-1}=(\Phi^{-1},-\omega)$.  
		The map 
		\[ C^\infty(M,G)\times \Omega^2(M)\to \Omega^3(M),\ (\Phi,\omega)\mapsto \d\omega-\Phi^*\eta\] 
		is a group homomorphism for this group structure. 
	\end{proposition}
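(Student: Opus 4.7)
The plan is to verify the four group axioms for $(C^\infty(M,G) \times \Omega^2(M),\bullet)$ one by one, and then to deduce the homomorphism property directly from Proposition~\ref{prop:mult}(b). Every axiom will reduce to a single identity for $\eta$ or $\beta$ already recorded in Subsection~\ref{subsec:eta}: closedness of $\eta$, the multiplicativity formula $\Mult^*\eta=\pr_1^*\eta+\pr_2^*\eta-\d\beta$, and the simplicial cocycle identity $\delta\beta=0$. No new analytic input is needed.

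First, I would identify the unit as $(1_G,0)$, with $1_G$ the constant map to $e\in G$. On the constant map $\theta^L$ pulls back to zero, so $(1_G,\Phi)^*\beta=0$ and $(\Phi,1_G)^*\beta=0$; thus $(1_G,0)\bullet(\Phi,\omega)=(\Phi,\omega)$ and likewise on the right. Next, for the inverse, expanding gives $(\Phi,\omega)\bullet(\Phi^{-1},-\omega)=(1_G,-(\Phi,\Phi^{-1})^*\beta)$. Using $\on{Inv}^*\theta^R=-\theta^L$ (proved in the same way as Proposition~\ref{prop:mult}(a)), the remaining 2-form becomes $-\tfrac12\Phi^*\theta^L\cdot \Phi^*\theta^L$, which vanishes because the pairing is symmetric while $\alpha\wedge\alpha=0$ for an ordinary 1-form: more concretely, $\alpha\cdot\alpha=\sum_{a,b}(e_a\cdot e_b)\,\alpha^a\wedge\alpha^b$ is symmetrized against an antisymmetric wedge and dies.

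Third, associativity. Expanding both sides of $((\Phi_1,\omega_1)\bullet(\Phi_2,\omega_2))\bullet(\Phi_3,\omega_3)$ and its mirror, the $\omega_i$-terms agree and the assertion reduces to
\[
(\Phi_1,\Phi_2)^*\beta+(\Phi_1\Phi_2,\Phi_3)^*\beta=(\Phi_2,\Phi_3)^*\beta+(\Phi_1,\Phi_2\Phi_3)^*\beta.
\]
This is exactly the pullback along $(\Phi_1,\Phi_2,\Phi_3)\colon M\to G^3$ of the cocycle identity $\delta\beta=0$ (spelled out, $\partial_0^*\beta-\partial_1^*\beta+\partial_2^*\beta-\partial_3^*\beta=0$ on $G^3$), which was noted in Subsection~\ref{subsec:eta}.

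Finally, the homomorphism property. For the product $(\Phi_1,\omega_1)\bullet(\Phi_2,\omega_2)$ the image under $(\Phi,\omega)\mapsto \d\omega-\Phi^*\eta$ is
\[
\d\omega_1+\d\omega_2-\d(\Phi_1,\Phi_2)^*\beta-(\Phi_1\Phi_2)^*\eta,
\]
and this equals $(\d\omega_1-\Phi_1^*\eta)+(\d\omega_2-\Phi_2^*\eta)$ precisely when $(\Phi_1,\Phi_2)^*\Mult^*\eta=(\Phi_1,\Phi_2)^*(\pr_1^*\eta+\pr_2^*\eta)-(\Phi_1,\Phi_2)^*\d\beta$, i.e.\ Proposition~\ref{prop:mult}(b) pulled back via $(\Phi_1,\Phi_2)$. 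The main bookkeeping point, and arguably the only non-trivial one, is the sign-tracking in associativity; once recognized as $\delta\beta=0$ the argument is immediate, so no genuine obstacle is expected.
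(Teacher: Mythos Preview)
Your proof is correct and follows exactly the approach the paper intends: the paper leaves the proposition as Exercise~\ref{ex:3.2}, only remarking that ``the associativity of the product involves the property $\delta\beta=0$,'' which is precisely the key step you identify and carry out. The verification of the unit, inverse, and homomorphism property are routine, and your reduction of associativity to the pullback of $\delta\beta=0$ along $(\Phi_1,\Phi_2,\Phi_3)$ is exactly right.
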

\end{tcolorbox}
We leave the proof as an exercise (see 
	Exercise \ref{ex:3.2}). We remark that the associativity of the product involves the property $\delta\beta=0$.\medskip

For $\xi\in\g$, let $\xi^L$ denote the left-invariant vector field generated by $\xi$, and $\xi^R$ the 
right-invariant vector field. The generating vector fields for the $G\times G$-action on $G$, 
given by $(g',g).a=g' a g^{-1}$, are 
\begin{equation}\label{eq:gtimesg}
  (\xi',\xi)_G=\xi^L-(\xi')^R.
\end{equation}
As already mentioned, the 3-form $\eta$ is bi-invariant, hence its Lie derivative with respect to the vector field
\eqref{eq:gtimesg} vanishes. Since $\eta$ is closed, it follows that the contraction  with this vector field is closed. 
In fact, it is exact: 

\begin{tcolorbox}
	\begin{proposition} 
		The contractions of the 3-form $\eta\in \Omega^3(G)$ with the generating vector fields of the $G\times G$-action are given by 
		\[ 	\iota((\xi',\xi)_G)\eta=-\d\Big(\hh  (\theta^R\cdot\xi'+\theta^L\cdot\xi)\Big)\]
	\end{proposition}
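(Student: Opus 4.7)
The plan is to split the contraction using $(\xi',\xi)_G=\xi^L-(\xi')^R$, handle the left- and right-invariant pieces separately, and then assemble. Since $\eta$ is bi-invariant we have the two expressions
\[ \eta=\tfrac{1}{12}\theta^L\cdot[\theta^L,\theta^L]=\tfrac{1}{12}\theta^R\cdot[\theta^R,\theta^R], \]
the second identity coming from $\theta^R=\Ad_g\theta^L$ together with $\Ad$-invariance of the metric. I will use the first form to compute $\iota(\xi^L)\eta$ and the second to compute $\iota((\xi')^R)\eta$, since in each case the relevant Maurer-Cartan form is dual to the vector field.

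First I would contract with $\xi^L$. Using that $\iota(\xi^L)\theta^L=\xi$ (constant) and applying $\iota(\xi^L)$ as an odd derivation to the product of a $\g$-valued $1$-form and a $\g$-valued $2$-form,
\[ \iota(\xi^L)\bigl(\theta^L\cdot[\theta^L,\theta^L]\bigr)=\xi\cdot[\theta^L,\theta^L]-2\,\theta^L\cdot[\xi,\theta^L]. \]
The key algebraic identity is that $\theta^L\cdot[\xi,\theta^L]=-\xi\cdot[\theta^L,\theta^L]$, which follows from the cyclic symmetry $\langle X,[Y,Z]\rangle=\langle[X,Y],Z\rangle$ of an invariant metric (unpacking both sides in a basis $\theta^L=\sum e^a\otimes T_a$ reduces them to $\sum e^a\wedge e^b\langle\xi,[T_a,T_b]\rangle$ up to a sign). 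The two terms therefore combine to give $3\,\xi\cdot[\theta^L,\theta^L]$, so
\[ \iota(\xi^L)\eta=\tfrac{1}{4}\,\xi\cdot[\theta^L,\theta^L]=-\tfrac{1}{2}\,\xi\cdot\d\theta^L=-\tfrac{1}{2}\d(\theta^L\cdot\xi), \]
using $\d\theta^L=-\tfrac{1}{2}[\theta^L,\theta^L]$ and the fact that $\xi$ is constant.

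Next I would run the same computation for $\iota((\xi')^R)\eta$, now starting from $\eta=\tfrac{1}{12}\theta^R\cdot[\theta^R,\theta^R]$ and using $\iota((\xi')^R)\theta^R=\xi'$. The algebra is identical, but the Maurer-Cartan equation for the right-invariant form carries the opposite sign, $\d\theta^R=+\tfrac{1}{2}[\theta^R,\theta^R]$, and so the analogue of the previous line reads
\[ \iota((\xi')^R)\eta=\tfrac{1}{4}\,\xi'\cdot[\theta^R,\theta^R]=+\tfrac{1}{2}\d(\theta^R\cdot\xi'). \]
Subtracting and pulling out $\d$ yields
\[ \iota\bigl(\xi^L-(\xi')^R\bigr)\eta=-\tfrac{1}{2}\d(\theta^L\cdot\xi)-\tfrac{1}{2}\d(\theta^R\cdot\xi')=-\d\Bigl(\tfrac{1}{2}\bigl(\theta^R\cdot\xi'+\theta^L\cdot\xi\bigr)\Bigr), \]
which is the claimed formula.

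The only real obstacle is bookkeeping: getting the sign in the Maurer-Cartan equation correct for left versus right forms, tracking the odd-derivation sign of $\iota$ on a product of forms, and carefully invoking cyclic symmetry to collapse $\xi\cdot[\theta,\theta]$ and $\theta\cdot[\xi,\theta]$ into each other. Once those pieces are pinned down, the computation is a one-line derivation twice, and the split into a $\xi^L$-part and a $(\xi')^R$-part neatly matches the two summands in the claimed primitive.
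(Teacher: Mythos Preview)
Your proof is correct and follows essentially the same approach as the paper's: compute $\iota(\xi^L)\eta=\tfrac{1}{4}\xi\cdot[\theta^L,\theta^L]$ and the analogous expression for $(\xi')^R$, then invoke the Maurer--Cartan equations. The paper's argument is a two-line sketch of exactly this computation, and you have simply filled in the intermediate steps (the derivation property of $\iota$, the cyclic-symmetry identity $\theta^L\cdot[\xi,\theta^L]=-\xi\cdot[\theta^L,\theta^L]$, and the sign difference between the left and right Maurer--Cartan equations).
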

\end{tcolorbox}
\begin{proof}
	From $\iota(\xi^L)\theta^L=\xi=\iota(\xi^R)\theta^R$ one obtains  $\iota(\xi^L)\eta=\f{1}{4}\xi\cdot [\theta^L,\theta^L]$, and similarly 
	for $(\xi')^R$.  Now use the Maurer-Cartan equations.
\end{proof}

\subsection{Polygons}\label{subsec:polygon}
Consider the closed disk $\Sigma=\Sigma_0^1$
with $\#\V=n$ vertices. (See Example \ref{ex:polygon}.) 
For the purposes of gluing diagrams, it is convenient to regard it as an $n$-gon.
	\begin{center}
	\includegraphics[width=0.23\textwidth]{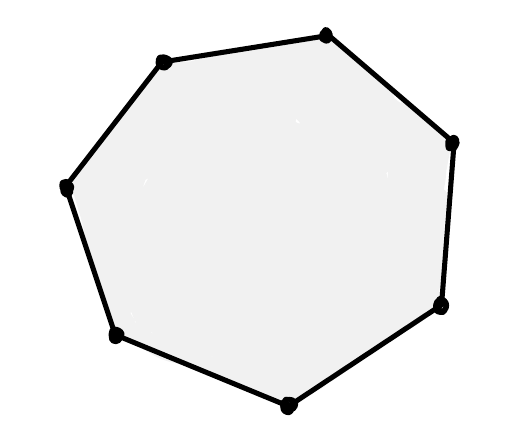}
\end{center}
Let $\V=\{\vz_1,\ldots,\vz_n\}$ be the set of vertices, labeled clockwise. This also determines a clockwise labeling of the 
edges, $\E=\{\ez_1,\ldots,\ez_n\}$, where $\ez_i$ is the oriented edge from $\vz_{i+1}$ to $\vz_{i}$ (using the convention
$\vz_{n+1}=\vz_1$). We denote the components $\Phi_{\ez_i}$ of 
\[ \Phi\colon \M_G(\Sigma,\V)\to G^\E\] 
by $g_i\colon \M_G(\Sigma,\V)\to G$. Clearly, $\Phi=(g_1,\ldots,g_n)$
is an embedding into $G^\E\cong G^n$ as the submanifold defined by $g_1\cdots g_n=e$. 
\begin{tcolorbox}
	\begin{proposition}\label{prop:diskcase}
There is a 	unique 2-form $\omega$ on $\M_G(\Sigma_0^1,\V)$ satisfying the properties of Theorem \ref{th:2form}. It is  described by the formula
\begin{equation}
\label{eq:severa} (e,\omega)=(g_1,0)\bullet \cdots \bullet (g_n,0)\end{equation}
(using \v{S}evera's formalism (Proposition \ref{prop:severa})). 
	\end{proposition}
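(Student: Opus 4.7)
The plan is to verify the three claims of Theorem \ref{th:2form} in sequence: first write out Ševera's product to obtain a concrete formula, then deduce (\ref{it:a}) cheaply from the homomorphism in Proposition \ref{prop:severa}, then derive (\ref{it:b}) by a more involved contraction computation (the main obstacle), and finally handle uniqueness via an orbit argument. Property (\ref{it:c}) on the polygon is slated for later, via Proposition \ref{prop:modulirank}. Iterating the Ševera multiplication $(\Phi_1,\omega_1)\bullet (\Phi_2,\omega_2)=(\Phi_1\Phi_2,\,\omega_1+\omega_2-\tf{1}{2}\Phi_1^*\theta^L\cdot \Phi_2^*\theta^R)$ produces the explicit formula
\[\omega=-\tf{1}{2}\sum_{i=1}^{n-1}(g_1\cdots g_i)^*\theta^L\cdot g_{i+1}^*\theta^R,\]
which is an honest 2-form on $\M_G(\Sigma_0^1,\V)$ because $g_1\cdots g_n=e$ on this submanifold of $G^n$. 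For property (\ref{it:a}), I would apply the group homomorphism $(\Phi,\omega)\mapsto \d\omega-\Phi^*\eta$ from Proposition \ref{prop:severa} to both sides of (\ref{eq:severa}). Each factor $(g_i,0)$ maps to $-g_i^*\eta$, the images add in $(\Omega^3,+)$, and the left-hand side $(e,\omega)$ maps to $\d\omega-0$, giving $\d\omega=-\sum_{\ez\in \E}\Phi_\ez^*\eta$.

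The main obstacle is property (\ref{it:b}). I would reduce to the case of a single vertex $\vz_j$ with an element $\xi\in \g^\V$ whose only nonzero component is $\xi_j$; by linearity this suffices. Under the relation $\xi_{\M_G(\Sigma,\V)}\sim_{\Phi_\ez} \xi_{\sz(\ez)}^L-\xi_{\tz(\ez)}^R$ recorded before Theorem \ref{th:2form}, the generating vector field is supported on the two edges $\ez_{j-1}$ and $\ez_j$ meeting $\vz_j$, acting as $\xi_j^L$ on $g_{j-1}$ and $-\xi_j^R$ on $g_j$; correspondingly, the right-hand side of (\ref{it:b}) collapses to $-\tf{1}{2}\big(g_{j-1}^*\theta^L\cdot \xi_j+g_j^*\theta^R\cdot \xi_j\big)$. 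The remaining task is to plug this vector field into the explicit $\omega$, apply the Leibniz rule $\iota(V)(\alpha\cdot\beta)=\iota(V)\alpha\cdot \beta-\alpha\cdot \iota(V)\beta$ for pairings of $\g$-valued $1$-forms, expand each $(g_1\cdots g_i)^*\theta^L$ via the iterated identities $\Mult^*\theta^L=\Ad_{h_2^{-1}}\pr_1^*\theta^L+\pr_2^*\theta^L$ and the analogue for $\theta^R$, and watch the resulting sum telescope. The telescoping will be powered by $\Ad$-invariance of the metric, with any leftover boundary term proportional to $(g_1\cdots g_n)^*\theta^{R/L}$ vanishing on $\M_G(\Sigma_0^1,\V)$. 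A cleaner alternative is induction on $n$: peel off the factor $(g_n,0)$ and compare with the $(n-1)$-gon formula, using the defining property of $\beta$ to account for the extra correction.

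For uniqueness, the difference $\omega'-\omega$ of any two $2$-forms satisfying (\ref{it:a}) and (\ref{it:b}) is a closed form annihilating every generating vector field of the $G^\V$-action. The stabilizer of the basepoint $(e,\ldots,e)\in \M_G(\Sigma_0^1,\V)$ under the action (\ref{eq:actionngon}) is the diagonal $G\hookrightarrow G^\V$, so its orbit has dimension $n\dim G-\dim G=(n-1)\dim G=\dim\M_G(\Sigma_0^1,\V)$ and is therefore open. For $G$ connected, $\M_G(\Sigma_0^1,\V)\cong G^{n-1}$ is connected, so the generating vector fields span the tangent bundle on a dense open set, forcing $\omega'-\omega=0$ by continuity; the general case reduces to this one componentwise.
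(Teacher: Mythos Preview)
Your approach is correct in outline, but the paper's argument is considerably lighter at several points, and you miss two simplifications.

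For property (\ref{it:b}), the paper avoids your telescoping/induction computation entirely. It first observes that conjugating \eqref{eq:severa} by $(g_n,0)$ gives the same product with indices cyclically shifted, establishing $\on{MCG}(\Sigma_0^1,\V)=\Z_n$-invariance directly (a claim in Theorem \ref{th:2form} that you do not address). This reduces (\ref{it:b}) to a single vertex, say $\vz_2$. Then associativity of $\bullet$ gives the splitting $\omega=-(g_1,g_2)^*\beta+\omega_1$ where $(e,\omega_1)=(g_1g_2,0)\bullet(g_3,0)\bullet\cdots\bullet(g_n,0)$; since $g_1g_2,g_3,\ldots,g_n$ are all invariant under the $\vz_2$-action, the contraction of $\omega_1$ vanishes and one is left with a two-line computation on $\beta$. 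Your direct expansion would work, but this trick is what the Ševera formalism is designed for.

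For property (\ref{it:c}) you defer to Proposition \ref{prop:modulirank}, but on the polygon $\Phi$ is an \emph{embedding}, so $\ker(T\Phi)=0$ and (\ref{it:c}) is vacuous. No Dirac geometry needed here.

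For uniqueness, your open-orbit plus density argument is correct but weaker than necessary: the $G^\V$-action \eqref{eq:actionngon} is actually \emph{transitive} on $\M_G(\Sigma_0^1,\V)$ (take $h_1=e$, $h_{i+1}=a_1\cdots a_i$ to send any point to $(e,\ldots,e)$), so the generating vector fields span the tangent space everywhere and (\ref{it:b}) determines $\omega$ outright. This also removes your connected-components caveat.
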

\end{tcolorbox}
\begin{proof}
For existence, define $\omega$ by Equation \eqref{eq:severa}. Multiplying from the right by $(g_n,0)^{-1}=(g_n^{-1},0)$, and from the left by $(g_n,0)$, we obtain the alternative expression 
\[ (e,\omega)=(g_n,0)\bullet (g_1,0)\bullet \cdots \bullet (g_{n-1},0).\] 
This shows that $\omega$ is invariant under 
cyclic permutations of the vertices. That is, it is invariant under 
the action of $\on{MCG}(\Sigma_0^1,\V)$. We next verify the properties \eqref{it:a}, \eqref{it:b} and \eqref{it:c} from Theorem \ref{th:2form}. For  \eqref{it:a}, apply the group homomorphism $(\Phi,\omega)\mapsto \d\omega-\Phi^*\eta$ (cf.~ Proposition \ref{prop:severa}) to both sides of \eqref{eq:severa}. The 
left hand side gives $\d\omega$, the right hand side gives $-\sum_{i=1}^n g_i^*\eta$.  For  \eqref{it:b}, using  
cyclic symmetry, it suffices to check for any one of the $G$-factors in $G^\V=G^n$. 
Let us consider  the action of the second factor:  
\[ h\cdot (g_1,\ldots,g_n)=(g_1h^{-1},hg_2,g_3,\ldots,g_n).\]
Its generating vector field is $(\xi^L,-\xi^R,0,\ldots,0)$. 
By formula \eqref{eq:severa}, we have 
\[ \omega=-(g_1,g_2)^*\beta+\omega_1\]
where  $(e,\omega_1)=(g_1g_2,0)\bullet (g_2,0)\bullet \cdots \bullet (g_n,0)$. Since the product $g_1g_2$ and the 
maps $g_3,\ldots,g_n$ are  all \emph{invariant} under the action, the contraction of 
$\omega_1$ with the generating vector fields is zero.  On the other hand, 
\begin{align*}
\iota(\xi^L,-\xi^R,0,\ldots,0) \big((g_1,g_2)^*\beta\big)&=
\hh \xi\cdot g_2^*\theta^R+\hh \xi\cdot g_1^*\theta^L
\end{align*}
This proves \eqref{it:b}. Property \eqref{it:c} is automatic since $\Phi$ is an embedding, so $\ker(T\Phi)=0$.  

It remains to show that $\omega$ satisfying the properties of Theorem \ref{th:2form} is unique. But this just follows since 
the action of $G^\V$ on $\M_G(\Sigma_0^1,\V)$, given by \eqref{eq:actionngon}, is \emph{transitive}.  Hence, there is at most one 2-form $\omega$ satisfying Property \eqref{it:b}. 
\end{proof}

\begin{remark}\label{rem:n-1}
The 2-form $\omega$ may also be computed from the formula 
\[ (g_1\cdots g_{n-1},\omega)=(g_1,0)\bullet \cdots \bullet (g_{n-1},0),\]
obtained by right multiplication of Equation \eqref{eq:severa} by $(g_n^{-1},0)$ and using 
$g_n^{-1}=g_1\cdots g_{n-1}$. 	
So, to compute $\omega$ we only need to compute $n-2$ products. 
\end{remark}

\begin{example}
	Some special cases: For $n=1$ there is nothing to see since $\M_G(\Sigma,\V)=\pt$. 
	For $n=2$ (the 2-gon) we obtain $\M_G(\Sigma,\V)=\{(g_1,g_2)|g_1g_2=e\}\cong G$
	with $\omega=0$. For  $n=3$ (the triangle)
	\begin{center}
	\includegraphics[width=0.15\textwidth]{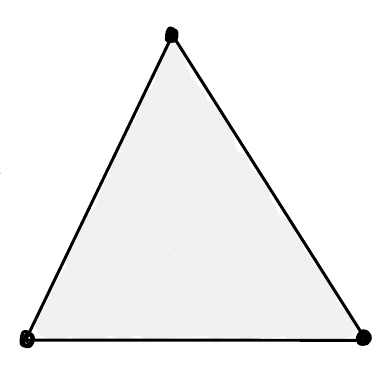}
\end{center}
	we find $\M_G(\Sigma,\V)=\{(g_1,g_2,g_3)|g_1g_2g_3=e\}\cong G^2$, with 
	\begin{equation}\label{eq:3dots}
	 \omega=-\hh g_1^*\theta^L\cdot g_2^*\theta^R,\end{equation}
	or similar expressions obtained by cyclic permutation of $g_1,g_2,g_3$. (One may  verify directly that the expression is invariant under such permutations, using $g_1g_2g_3=e$.) 
\end{example}

\subsection{Gluing patterns}\label{subsec:gluingpattern} 
A convenient reference for presentations of surfaces via gluing patterns is Thurston \cite[Chapter 1.3]{thu:3dim}. 
Let 
\begin{equation}\label{eq:polgons}
\wh{\Sigma}=\wh\Sigma^{(1)}\sqcup \ldots\sqcup \wh\Sigma^{(r)}\end{equation}
be a disjoint union of oriented polygons,  with set of vertices $\wh{\V}=\sqcup_k \wh{\V}^{(k)}$ where 
$\wh{\V}^{(k)}\subset \wh{\Sigma}^{(k)}$.   Let $\wh{\E}=\sqcup_k \wh{\E}^{(k)}$ be the set of boundary edges. A \emph{gluing pattern} is given by a subset 
$\wh{\E}'$ of `free' edges, and a fixed point free involution on the complement $\wh{\E}''=\wh{\E}-\wh{\E}'$ of `paired' edges. 
One obtains a compact oriented surface with boundary 
\[ \Sigma=\wh{\Sigma}/\sim,\]
with set of vertices $\V=\wh{\V}/\sim$, by gluing boundary segments of $\wh{\Sigma}$ as prescribed by the gluing pattern. 
Geometrically, the boundary segments corresponding to paired edges are glued by orientation-reversing diffeomorphisms. Note  that the 
quotient map induces a bijection between the set $\wh{\E}'$ of free edges with the set $\E$ of boundary edges of $(\Sigma,\V)$. 

Gluing patterns are conveniently described by `words' (one word for each $k=1,\ldots,r$), describing the relation arising from the
quotient map. 

\begin{example}
Each of the following gluing patterns describes a 2-torus $\Sigma_1^0$. 
\begin{enumerate}
	\item Identifying opposite sides of a square,  corresponding to the word
	 $ \az \bz \az^{-1} \bz^{-1}$. 
	 	\begin{center}
	 	\includegraphics[width=0.25\textwidth]{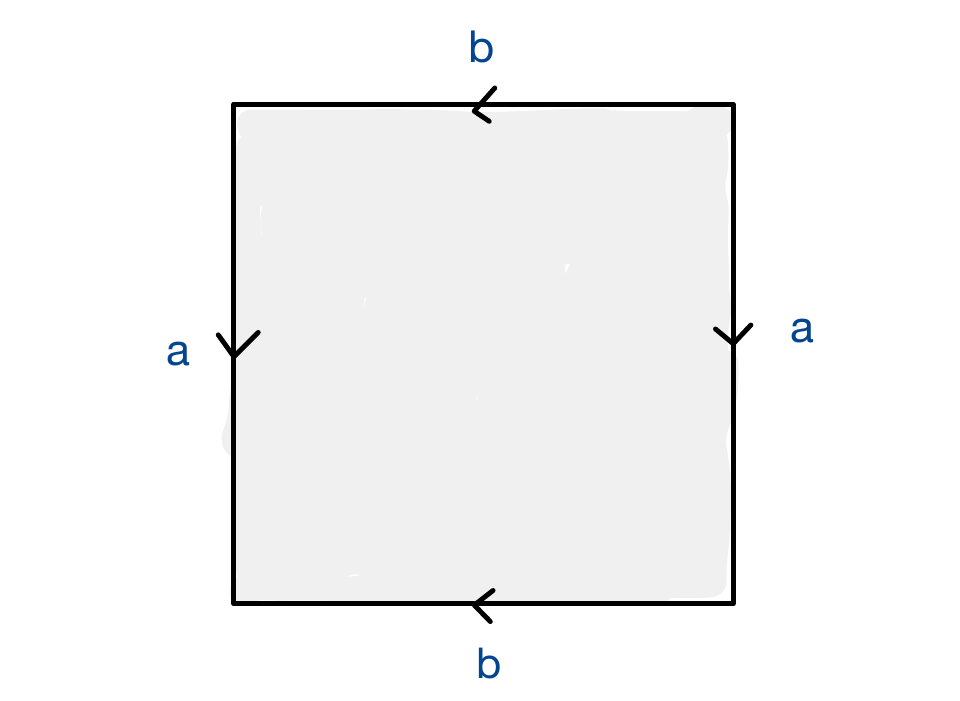}
	 \end{center}
	 \item Gluing two triangles,  corresponding to the two words 
	 $\az\bz\cz,\ \az^{-1}\bz^{-1}\cz^{-1}$.
	 	\begin{center}
	 	\includegraphics[width=0.30\textwidth]{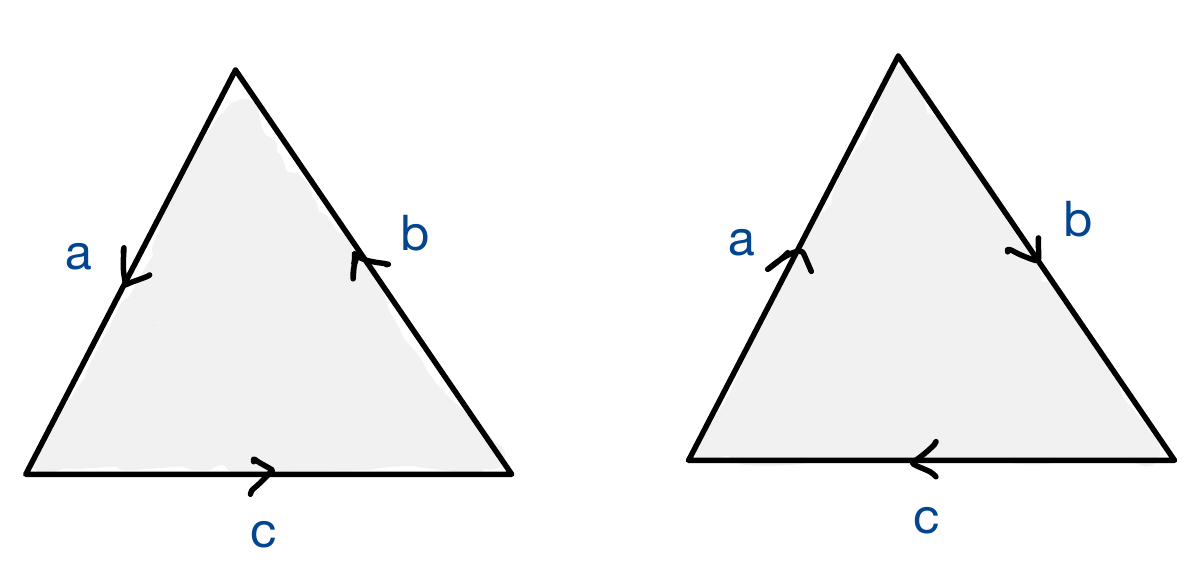}
	 \end{center}
	 \item Gluing opposite sides of a hexagon, with corresponding word
	  $\az\bz\cz\az^{-1}\bz^{-1}\cz^{-1}$.
	 \begin{center}
	 \includegraphics[width=0.25\textwidth]{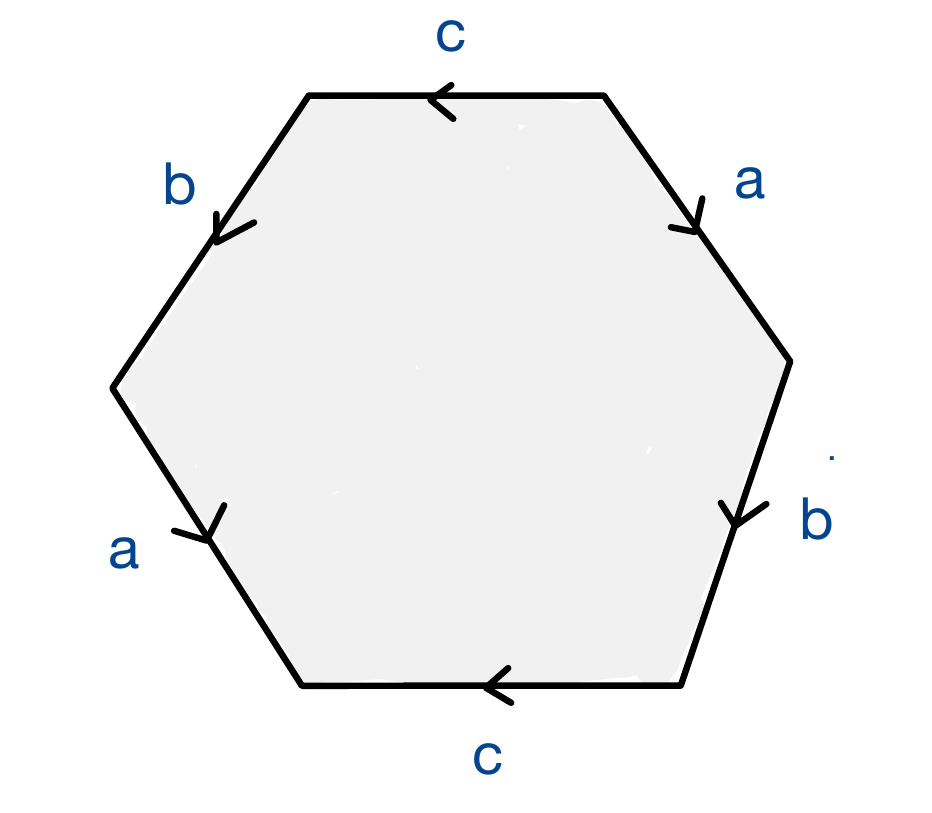}
 \end{center}
\end{enumerate}	
Observe that the set $\V=\wh{\V}/\sim$ of vertices for the glued surfaces $\Sigma$ has cardinality $\#\V=1$ for the first two gluing patterns, $\#\V=2$ for the third gluing pattern.
\end{example}	
\begin{example}
The word	\[ \az_1\bz_1 \az_1^{-1} \bz_1^{-1}
\cdots \az_g\bz_g \az_g^{-1} \bz_g^{-1}\cz_1\cdots\cz_n
\]
describes $(\Sigma,\V)$ where $\Sigma=\Sigma_g^1$ and $\#\V=n$. 
\end{example}

The pair $(\Sigma,\V)$ defined by a gluing pattern will always satisfy condition (A2) (each boundary component of $\Sigma$ meets $\V$). We will impose condition (A1) (that every component of $\Sigma$ has non-empty boundary) by hand; note that this condition is automatic if every component of $\wh\Sigma^{(k)}$ contains at least one free edge. 
Conversely, given  $(\Sigma,\V)$ satisfying (A1),(A2), one may obtain  a gluing pattern $(\wh{\Sigma},\wh{\V})$ through iterated cuts along paths between vertices. 
The quotient map 
\[(\wh{\Sigma},\wh{\V})\to (\Sigma,\V)
\]
induces an embedding  
\begin{equation}\label{eq:embedding}
\iota\colon \M_G(\Sigma,\V)\hra \M_G(\wh{\Sigma},\wh{\V})=\prod_{k=1}^r \M_G(\wh{\Sigma}^{(k)},\wh{\V}^{(k)}).
\end{equation}
%
The space $\M_G(\wh{\Sigma},\wh{\V})$ carries a 2-form $\wh{\omega}=\sum_{k=1}^r \wh{\omega}^{(k)}$
as a sum of 2-forms $\wh{\omega}^{(k)}$ on $\M_G(\wh{\Sigma}^{(k)},\wh{\V}^{(k)})$ 
from Subsection \ref{subsec:polygon}. Let 
\begin{equation}\label{eq:omegaaspullback}
 \omega=\iota^*\wh{\omega}.\end{equation}

\begin{tcolorbox}
	\begin{proposition}\label{prop:2form}
		The 2-form $\omega\in \Omega^2(\M_G(\Sigma,\V))$ given by \eqref{eq:omegaaspullback} does not depend on the choice of gluing pattern, and satisfies the properties of Theorem 
		\ref{th:2form}. 
		
		The interior vertices do not contribute to the 2-form, in the sense that the form on $\M_G(\Sigma,\V)$ is the pullback of the form on $\M_G(\Sigma,\V-\V^{\on{int}})$.		
	\end{proposition}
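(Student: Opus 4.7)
The proof has three parts: verifying properties (a) and (b) of Theorem \ref{th:2form} for $\omega = \iota^*\hat\omega$ given one fixed gluing pattern; establishing independence from the pattern choice; and the interior-vertex reduction. Minimal degeneracy (c) is postponed to Section \ref{sec:dirac} as indicated in the statement.

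For (a) and (b), I would exploit that $\iota$ is equivariant for the diagonal homomorphism $G^\V \to G^{\hat\V}$ induced by the quotient $\hat\V \to \V$; hence for $\xi \in \g^\V$ the generating vector field on $\M_G(\Sigma,\V)$ is $\iota$-related to the generating vector field of its pullback $\hat\xi \in \g^{\hat\V}$. Summing the analogous identities from Proposition \ref{prop:diskcase} over the polygon factors and applying $\iota^*$ produces sums indexed by $\hat\E = \hat\E' \sqcup \hat\E''$. For each paired pair $\{\hat\ez, \hat\ez''\} \subset \hat\E''$, the orientation-reversing gluing forces $\iota^* g_{\hat\ez''} = (\iota^* g_{\hat\ez})^{-1}$ with sources and targets swapped; Proposition \ref{prop:mult}(a) together with the identities $\on{Inv}^*\theta^L = -\theta^R$ and $\on{Inv}^*\theta^R = -\theta^L$ then produces exact pairwise cancellation of the two corresponding terms in each formula. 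The surviving contributions from the free edges $\hat\E'$ are in bijection with $\E$ with matching holonomies, yielding the target identities.

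For independence, the plan is to connect any two gluing patterns by a sequence of elementary \emph{subdivision moves} (and their inverses): cut one polygon $\hat\Sigma^{(k)}$ along a chord between two of its existing vertices to produce two smaller polygons joined along a new paired edge of holonomy $h$. Associativity of \v{S}evera's product (Proposition \ref{prop:severa}) combined with the identity $(h,0) \bullet (h^{-1},0) = (e, -(h,h^{-1})^*\beta)$ shows that the refined 2-form equals the original plus the correction $-(h,h^{-1})^*\beta$; this correction vanishes because $\on{Inv}^*\theta^R = -\theta^L$ reduces $(h,h^{-1})^*\beta$ to a scalar multiple of $h^*(\theta^L \cdot \theta^L)$, which is zero since $\theta^L \cdot \theta^L = \sum_a \theta^L_a \wedge \theta^L_a$ is a sum of wedges of scalar 1-forms with themselves. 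Any two gluing patterns of $(\Sigma, \V)$ admit a common refinement built from their overlaid cut-arc systems, from which both are reached by iterated subdivisions, so $\omega$ is well defined. The interior-vertex claim follows by the same bookkeeping: for $\vz \in \V^{\on{int}}$ every edge of $\hat\E$ incident to a preimage of $\vz$ is paired, so the $G$-factor of $G^\V$ at $\vz$ acts trivially on the free-edge holonomies and its contribution to formula (b) cancels pair-by-pair as above, showing $\omega$ descends along the principal-$G$ quotient $\M_G(\Sigma, \V) \to \M_G(\Sigma, \V - \{\vz\})$ of Example \ref{ex:quotients}.

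The hardest piece is the common-refinement construction used for independence: one must arrange the overlaid cut-arc systems of two gluing patterns so that their intersections lie only in $\V$. This typically requires a general-position perturbation, or else a temporary passage through auxiliary interior vertices which is then undone using the interior-vertex reduction; either way, the underlying algebraic cancellation inside a single subdivision move is elementary, flowing directly from the two identities $(h,h^{-1})^*\beta = 0$ and $\delta\beta = 0$ assembled in Subsection \ref{subsec:eta}.
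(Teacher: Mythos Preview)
Your argument is largely correct and tracks the paper's proof closely, with a few differences worth noting.

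For properties \eqref{it:a} and \eqref{it:b} your cancellation over paired edges is exactly the paper's argument. For the subdivision move, your explicit computation $(h,h^{-1})^*\beta=0$ is correct but redundant: Proposition~\ref{prop:severa} already asserts that $(h,0)^{-1}=(h^{-1},0)$, so $(h,0)\bullet(h^{-1},0)=(e,0)$ is immediate from the group axioms. The paper simply inserts $(c,0)\bullet(c^{-1},0)$ into the product for the polygon and regroups.

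The paper handles independence more cleanly than your common-refinement route: rather than overlaying two arc systems (and wrestling with their intersections), it invokes the combinatorial fact \cite[Problem 1.3.12]{thu:3dim} that any two polygon presentations of $(\Sigma,\V)$ are connected by a finite sequence of diagonal cuts and their inverse glue moves. This sidesteps the general-position issue you flag in your last paragraph.

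Your interior-vertex argument has a small gap. You correctly show that $\omega$ on $\M_G(\Sigma,\V)$ is basic for the $G$-action at $\vz\in\V^{\on{int}}$, hence descends to \emph{some} 2-form on $\M_G(\Sigma,\V-\{\vz\})$. But the proposition asserts that this descended form coincides with the form defined intrinsically on $\M_G(\Sigma,\V-\{\vz\})$ via its own gluing patterns; your argument does not establish that identification. The paper closes this by an explicit comparison: starting from a gluing pattern $(\wh\Sigma,\wh\V)$ for $(\Sigma,\V-\{\vz\})$, one obtains a pattern for $(\Sigma,\V)$ by slitting $\wh\Sigma$ from an existing vertex to the preimage of $\vz$, and the effect on \v{S}evera's formula is precisely the insertion of $(g,0)\bullet(g^{-1},0)=(e,0)$. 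This directly exhibits the form on $\M_G(\Sigma,\V)$ as the pullback of the form on $\M_G(\Sigma,\V-\{\vz\})$.
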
 
\end{tcolorbox}

\begin{proof}
As shown in Section \ref{subsec:polygon}, the 2-form  $\wh{\omega}$ satisfies Property (a) of Theorem 
\ref{th:2form}.  Hence
\[ \d\omega=\iota^*\d\wh{\omega}=-\iota^*\sum_{\ez\in \wh{\E}} \wh{\Phi}_{\ez}^*\eta.\]
For any pair of edges $\ez_i,\ez_j\in \wh{\E}''$ that are identified under the quotient map, we have that 
$\iota^*\wh{\Phi}_{\ez_i}=\iota^*\wh{\Phi}_{\ez_j}^{-1}$. Since 
\[ \on{Inv}^*\eta=-\eta,\] these terms will cancel.
It follows that the sum over edges in $\wh{\E}''$ is zero, leaving only the sum over $\wh{\E}'\cong \E$. 
This gives Property (a) for $\omega$.  Property (b) is verified similarly, using that 
\[ \on{Inv}^*(\theta^R\cdot\xi'+\theta^L\cdot\xi)=- \theta^R\cdot\xi-\theta^L\cdot\xi'.\]

We next show independence of the choice of gluing pattern for $(\Sigma,\V)$. Suppose we are given one such 
gluing pattern, presenting $\Sigma$ as a quotient of a disjoint union \eqref{eq:polgons} 
of polygons. Given paired edges $\ez,\fz\in \wh{\E}''$ from 
distinct polygons, we obtain a new gluing pattern with $r-1$ polygons by gluing 
along those edges. In the opposite direction, we obtain a new gluing pattern with $r+1$ polygons by 
cutting some $\Sigma^{(k)}$ along a diagonal. 
	\begin{center}
	\includegraphics[width=0.5\textwidth]{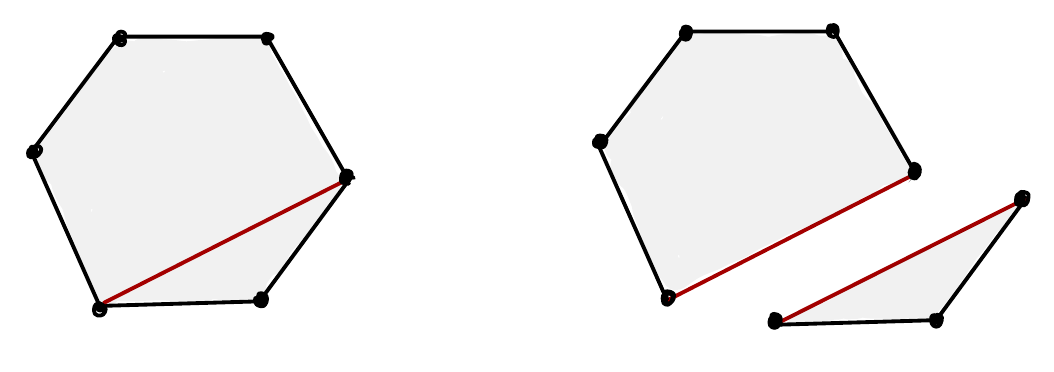}
\end{center}
Any two gluing patterns for $(\Sigma,\V)$ are related by iterated 
cuttings and gluings of this type. (Cf.~ \cite[Problem 1.3.12]{thu:3dim}.) Hence it suffices to see that the 2-form does not 
change under a simple cut of a polygon.

For any given polygon $\wh{\Sigma}^{(k)}\subset \wh{\Sigma}$, the contribution to the 2-form is described by \v{S}evera's formula 
$(e,\wh\omega^{(k)})=(g_1,0)\bullet \cdots \bullet (g_n,0)$ where $g_i$ are the holonomies along the sides of the polygon; 
with $g_1\cdots g_n=e$. 
Cutting $\wh\Sigma^{(k)}$ along a diagonal between non-adjacent vertices $\vz_i,\vz_j$ (thus $i\le j$ with $j\neq i,i+1$) 
produces two new polygons $\wh\Sigma^{(k)}_1,\wh\Sigma^{(k)}_2$, with corresponding 2-forms 
$\wh\omega^{(k)}_1,\  \wh\omega^{(k)}_2$. 
The cutting amounts to introducing a new variable 
\[ c=(g_i\cdots g_{j-1})^{-1}=g_j\cdots g_n g_1\cdots g_{i-1}.
\]
We have 
\begin{align*}
(e,\wh\omega^{(k)})&=
(g_1,0)\bullet \cdots \bullet (g_n,0)\\
&=(g_i,0)\bullet \cdots \bullet (g_n,0)\bullet (g_1,0)\bullet \cdots \bullet  (g_{i-1},0)\\
&=\Big((g_i,0)\bullet \cdots \bullet (g_{j-1},0)\bullet (c,0)\Big)\bullet 
\Big(c^{-1},0)\bullet  (g_{j},0)\cdots \bullet \cdots \bullet (g_{i-1},0)\Big)\\
&=(e,\wh\omega^{(k)}_1)\bullet (e,\wh\omega^{(k)}_2)\\
&=(e,\wh\omega^{(k)}_1+\wh\omega^{(k)}_2).
\end{align*}
Hence $\wh\omega^{(k)}=\wh\omega^{(k)}_1+\wh\omega^{(k)}_2$ as required. \smallskip

From the construction of $\omega$,  it follows that for any orientation preserving diffeomorphism 
$(\Sigma_1,\V_1)\to (\Sigma_2,\V_2)$, the resulting map on moduli spaces intertwines the 2-forms. In particular, the 2-form on $\M_G(\Sigma,\V)$ is invariant under the action of the mapping class group. \smallskip

We next show that interior vertices do not contribute to the 2-form. Conversely, we show that for any given 
$(\Sigma,\V)$, and any new interior vertex $\vz\in \on{int}(\Sigma)$, the 2-form $\omega'$ 
on $\M_G(\Sigma,\V\cup\{\vz\})$ descends to the 2-form $\omega$ on $\M_G(\Sigma,\V)$. Indeed, let $(\wh{\Sigma},\wh{\V})$ 
be a given gluing pattern for $(\Sigma,\V)$. The pre-image of $\vz$ is a single point in the interior of $\wh{\Sigma}$, which we again denote by $\vz$. 
We obtain a gluing pattern $(\wh{\Sigma}',\wh{\V}')$ 
for $(\Sigma,\V\cup\{\vz\})$ by cutting $\wh{\Sigma}$ along a segment from 
a vertex in $\wh{\V}$ to the new vertex $\vz$. Thus, $\wh{\Sigma}'$ has two additional edges $\ez_1,\ez_2$, 
meeting in a vertex $\vz$, which are identified under the map to $\wh{\Sigma}$. 
The formula for $(e,\omega')$ is obtained from that for $(e,\omega)$ by inserting 
$(g,0)\bullet (g^{-1},0)=(e,0)$; where $g$ is the holonomy along $\ez$. This shows that $\omega'$ decends to $\omega$.

It remains to show that if $(\Sigma,\V)$ 
satisfies (A3), then the minimal degeneracy condition 
(Property \eqref{it:c}) is satisfied. This proof uses Dirac geometry and will be presented in Section \ref{subsec:mindeg} below. 
\end{proof}

The last part of the proposition motivates Condition (A3), demanding that all vertices are contained in the boundary. Nevertheless, in some applications, for example for triangulations of a surface, or in the context of reduction (Section \ref{subsec:gluingreduction})
interior vertices arise in a natural way. 

\begin{example}
	The gluing pattern 	
	\begin{center}
		\includegraphics[width=0.2\textwidth]{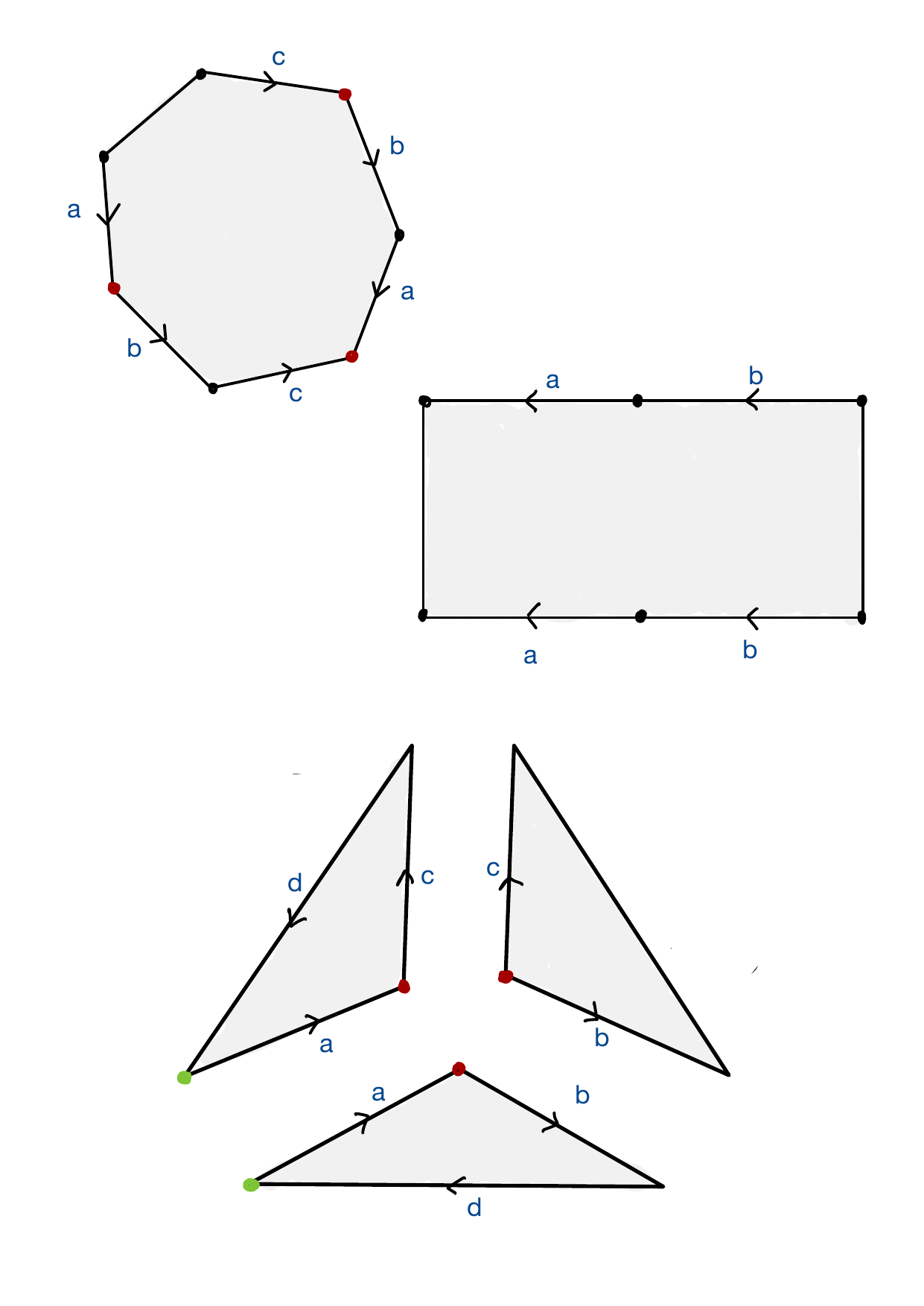}
	\end{center}
	describes a 2-torus with one boundary component. Here $\#\V=2$, with one vertex (indicated as red) in the interior. 
\end{example}


\begin{example}
	The disconnected gluing pattern 	
	\begin{center}
		\includegraphics[width=0.3\textwidth]{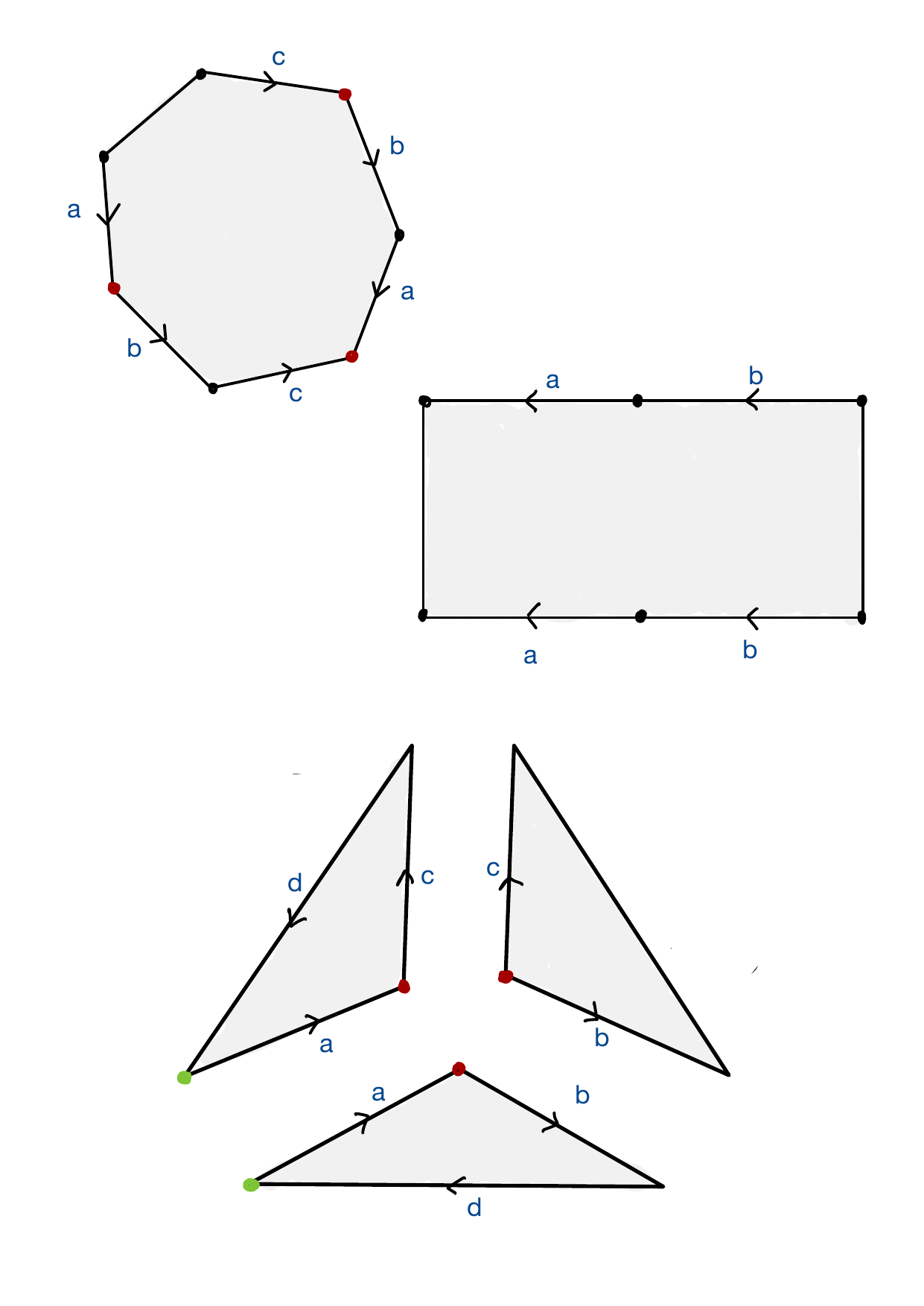}
	\end{center}
	corresponds to a triangulation of a disk. Here $\#\V=3$, with two vertices (indicated as red, green) in the interior of $\Sigma$. 
\end{example}

\subsection{Example}\label{subsec:cylinderexample}
Let us compute the 2-form $\omega$ in the following simple but important example: the moduli space 
$\M_G(\Sigma,\V)$ 
of a cylinder $\Sigma=\Sigma_0^2$, with $\V$ given by a single vertex on each boundary edge. We shall see later that this moduli space provides a basic example of a quasi-symplectic groupoid. 

\begin{center}
	\includegraphics[width=0.5\textwidth]{cylinder.pdf}
\end{center}
As generators for the fundamental groupoid, we take the class of the boundary edge $\az$ at vertex $\vz_1$ 
and the class $\cz$ of a path from $\vz_1$ to $\vz_2$. Let $\az'$ be the class of the boundary edge at $\vz_2$. 
The gluing pattern is thus 
	\begin{center}
	\includegraphics[width=0.3\textwidth]{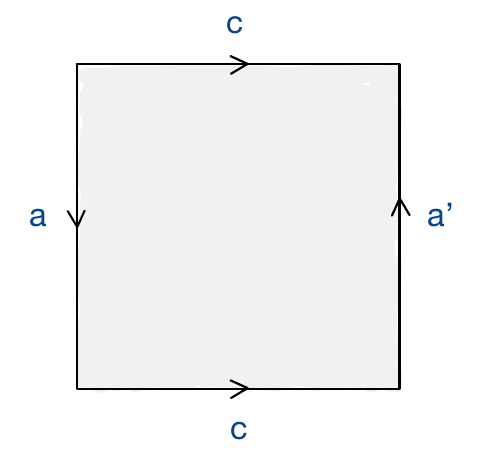}
\end{center}
corresponding to the word $\az\cz^{-1}\az'\cz$. 
Let $a,c\colon \M_G(\Sigma,\V)\to G$ denote the holonomies along $\az,\cz$. The momentum map components are
\[ \Phi_1(a,c)=a,\ \ \Phi_2(a,c)=ca^{-1}c^{-1}.\]
Using \v{S}evera's formula (Proposition \ref{prop:diskcase}), we obtain 
\[ (e,\omega)=(c,0)\bullet 
(a,0)\bullet (c^{-1},0)\bullet (a',0)\]
with $a'=ca^{-1}c^{-1}$. By Remark \ref{rem:n-1}, it suffices to work out the product of the first three terms:
$(c,0)\bullet 
(a,0)\bullet (c^{-1},0)=(cac^{-1},\omega)$. 
The result of this calculation is (Exercise \ref{ex:3.4}):
\begin{tcolorbox}
	\begin{proposition}[Moduli space of cylinder] \label{prop:cylinder}
		The 2-form on the moduli space for the cylinder $\M_G(\Sigma_0^2,\V)$ is given by 
		\[ \omega=-\hh c^*\theta^L\cdot (a^*\theta^L+a^*\theta^R)+\hh c^*\theta^L\cdot \Ad_{a}  (c^*\theta^L) .\]
	\end{proposition}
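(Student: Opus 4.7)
The plan is to apply Ševera's formalism (Proposition \ref{prop:severa}) directly to the word $\az\cz^{-1}\az'\cz$ associated with the gluing pattern described above. By the definition of the 2-form on the polygonal piece (Proposition \ref{prop:diskcase}), together with Remark \ref{rem:n-1}, which allows us to drop the last factor, the 2-form $\omega$ arises as the second component of
\[ (c,0)\bullet (a,0)\bullet (c^{-1},0) = (cac^{-1},\omega). \]
So the task reduces to a two-step evaluation of the group operation $\bullet$ with the explicit 2-form $\beta=\tfrac12 \pr_1^*\theta^L\cdot \pr_2^*\theta^R$.

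First I would compute $(c,0)\bullet(a,0)$, which immediately yields $(ca,-\tfrac12 c^*\theta^L\cdot a^*\theta^R)$. Then I would right-multiply by $(c^{-1},0)$. For this second step, I need pullback identities: from $\Mult^*\theta^L=\Ad_{g_2}^{-1}\pr_1^*\theta^L+\pr_2^*\theta^L$ (established in Proposition \ref{prop:mult}) I get $(ca)^*\theta^L=\Ad_a^{-1}(c^*\theta^L)+a^*\theta^L$, and from $\on{Inv}^*\theta^L=-\theta^R$ I get $(c^{-1})^*\theta^R=-c^*\theta^L$. Substituting these into the defining expression for $\bullet$ produces a sum of three dot-products of $\g$-valued 1-forms.

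To bring the answer into the stated form, I would simplify using two purely algebraic facts for $\g$-valued 1-forms: (i) antisymmetry under swap, $\mu\cdot\nu=-\nu\cdot\mu$, and (ii) the consequence of $\Ad$-invariance of the metric that $\Ad_a^{-1}\mu\cdot\nu=\mu\cdot\Ad_a\nu$. Applied to the term $-\tfrac12(ca)^*\theta^L\cdot(c^{-1})^*\theta^R$, fact (ii) converts the $\Ad_a^{-1}$ into $\Ad_a$ on $c^*\theta^L$, while fact (i) gathers the $a^*\theta^L$ and $a^*\theta^R$ contributions into a single $c^*\theta^L\cdot(a^*\theta^L+a^*\theta^R)$ term with the correct sign. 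The result matches the claimed formula exactly.

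The main obstacle is really just bookkeeping: one has to apply the multiplication and inversion pullback identities in the right order, and then be careful with the symmetry manipulations, since signs are easy to misplace given that $\theta^L\cdot\theta^L=0$ while $\theta^L\cdot\theta^R$ is nontrivial. A convenient sanity check, which I would perform at the end, is to verify Property (b) of Theorem \ref{th:2form} directly against the closed-form answer: contracting with the generators of the $G^\V$-action (acting as $(h_1,h_2)\cdot(a,c)=(h_1 a h_1^{-1},h_2 c h_1^{-1})$) should reproduce $-\tfrac12\Phi^*(\theta^R\cdot\xi_{\tz(\ez)}+\theta^L\cdot\xi_{\sz(\ez)})$ summed over the two boundary edges; this check is independent of the Ševera calculation and confirms the signs.
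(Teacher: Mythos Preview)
Your proposal is correct and follows exactly the approach indicated in the paper: the text preceding the proposition explicitly sets up $(e,\omega)=(c,0)\bullet(a,0)\bullet(c^{-1},0)\bullet(a',0)$, invokes Remark \ref{rem:n-1} to reduce to $(c,0)\bullet(a,0)\bullet(c^{-1},0)=(cac^{-1},\omega)$, and then leaves the remaining computation as Exercise \ref{ex:3.4}. Your step-by-step evaluation, pullback identities, and symmetry manipulations are all sound (one tiny remark: for $(c^{-1})^*\theta^R=-c^*\theta^L$ you actually use $\on{Inv}^*\theta^R=-\theta^L$, which is of course equivalent to the identity you cite).
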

\end{tcolorbox}
One may verify directly that this 2-form (which was first described in \cite[Section 3.2]{al:mom}) 
 is invariant under the transformation $(a,c)\mapsto (a,ca)$, corresponding to the generator of 
$\on{MCG}(\Sigma,\V)\cong \Z$.

\subsection{Exercises}
\begin{exercise}\label{ex:3.1}
	Prove that a bi-invariant differential form $\alpha\in \Omega^k(G)$ on a Lie group $G$ satisfies \[ \on{Inv}^*\alpha=(-1)^k\alpha\]
	where $\on{Inv}\colon G\to G$ is the group inversion.  
	Use this fact to conclude that every bi-invariant differential form 
	on $G$ is closed. 
\end{exercise}

\begin{exercise}\label{ex:3.6}
	Let $G=\on{PSL}(2,\R)$, and $\wt{G}$ its universal cover. 
	Show that the product-of-commutators map $\Phi\colon G^{2g}\to G$ (cf.~ Corollary \ref{cor:goldman}) 
	lifts to a map $\wt{\Phi}\colon G^{2g}\to \wt{G}$ taking $(e,\ldots,e)$ to the group unit in $\wt{G}$.  
	Show that elements 
	$(a_1,b_1,\ldots,a_\gz,b_{\gz})\in \Phi^{-1}(e)-\wt{\Phi}^{-1}(e)$ have trivial stabilizer under conjugation; hence 
	Proposition \ref{prop:modulirank} shows that 
	$\Phi$ has maximal rank at such points. 
\end{exercise}

\begin{exercise}\label{ex:3.2}
	Prove \v{S}evera's  Proposition \ref{prop:severa}.
\end{exercise}

\begin{exercise}\label{ex:3.3} Verify directly that the expression 
	\eqref{eq:3dots} is unchanged under cyclic permutations of $g_1,g_2,g_3$ with $g_1g_2g_3=e$. 
\end{exercise}

\begin{exercise}\label{ex:3.4} 
	Prove Proposition \ref{prop:cylinder}, by calculating $(c,0)\bullet (a,0)\bullet (c^{-1},0)$. 
\end{exercise}

\begin{exercise}\label{ex:3.5}
	Every $(\Sigma,\V)$ satisfying (A1),(A2) can be cut into a disjoint union of triangles through a finite number of cuts.
	How many cuts are needed? What is the resulting number of triangles?
\end{exercise}


\section{More cutting and gluing }\label{sec:cutglue}
In the previous section, we constructed the 2-form on moduli spaces by cutting the surface into a disjoint union of polygons. It is also interesting to consider any finite number of  cuts, without  cutting all the way to polygons.

\subsection{Cutting along paths between vertices}\label{subsec:cut}
Let us first describe single cuts. Suppose $(\Sigma,\V)$ satisfies (A1),(A2), and that $(\Sigma',\V')$ is obtained by cutting  along an embedded path $\gamma$, with end points in $\V$ and the rest of the path in the interior of  $\Sigma$. The cut creates two new boundary edges: $\#\E'=\#\E+2$. 

There are various cases, depending on whether the end points of $\gamma$ are interior vertices or boundary vertices, and whether or not $\gamma$ is a loop: 
\begin{enumerate}
	\item $\gamma$ is a loop based at a boundary vertex
	\item $\gamma$ is a loop based at an interior vertex
	\item $\gamma$ is a path between distinct boundary vertices
	\item $\gamma$ is a path between a boundary vertex and an interior vertex
	\item $\gamma$ is a path between distinct interior vertices
\end{enumerate}
In cases (a) and (c) the cut created two new vertices, while $\chi(\Sigma)$ increases by $1$. 
See the picture below for a cut of type (a). 
\begin{center}
	\includegraphics[width=0.5\textwidth]{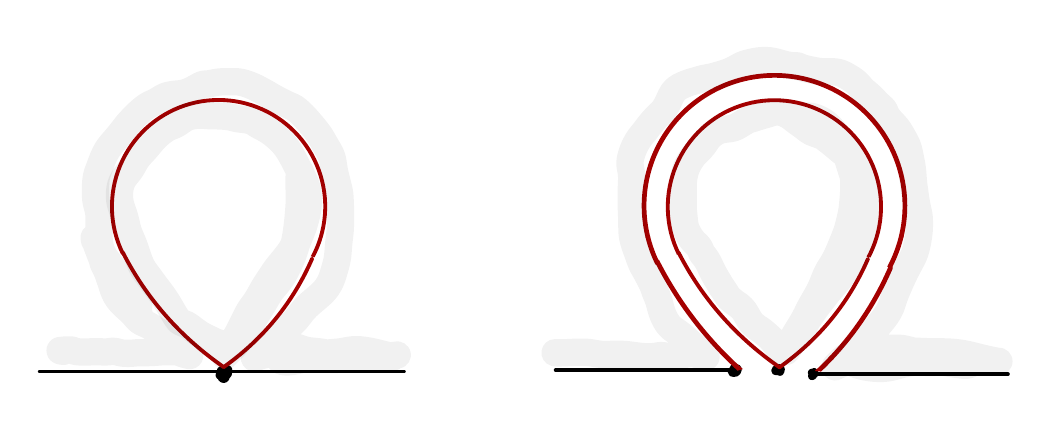}
\end{center}
(The region inside the loop may contain handles and boundary components.) In cases (b) and (d) the cut creates one new vertex, while the Euler characteristic is preserved. See the picture below for a cut of type (b). 
\begin{center}
	\includegraphics[width=0.4\textwidth]{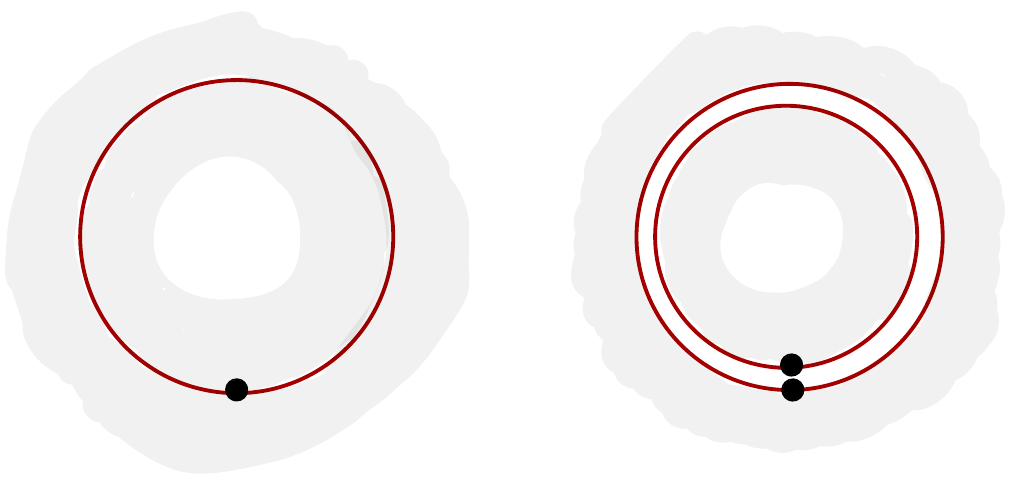}
\end{center}
Finally, a cut of type  (e) creates no new vertices, and the Euler characteristic decreases by $1$. 
Observe that in all of the cases, 
\[ \#\V'-\chi(\Sigma')=\#\V-\chi(\Sigma)+1,\]
corresponding to one new generator of the fundamental groupoid created by the cut.  Accordingly, 
the  quotient map 
defines an embedding 
\[ \M_G(\Sigma,\V)\hra \M_G(\Sigma',\V')\]
as a submanifold of codimension equal to $\dim G$.

\begin{example}
	Let $\Sigma=\Sigma^1_1$ be the one-holed torus, with $\V$ consisting of one vertex on its boundary.  
	Consider a cut (type (a)) along an embedded  loop $\gamma$ based at the vertex, winding once around the handle: 
	\begin{center}
		\includegraphics[width=0.3\textwidth]{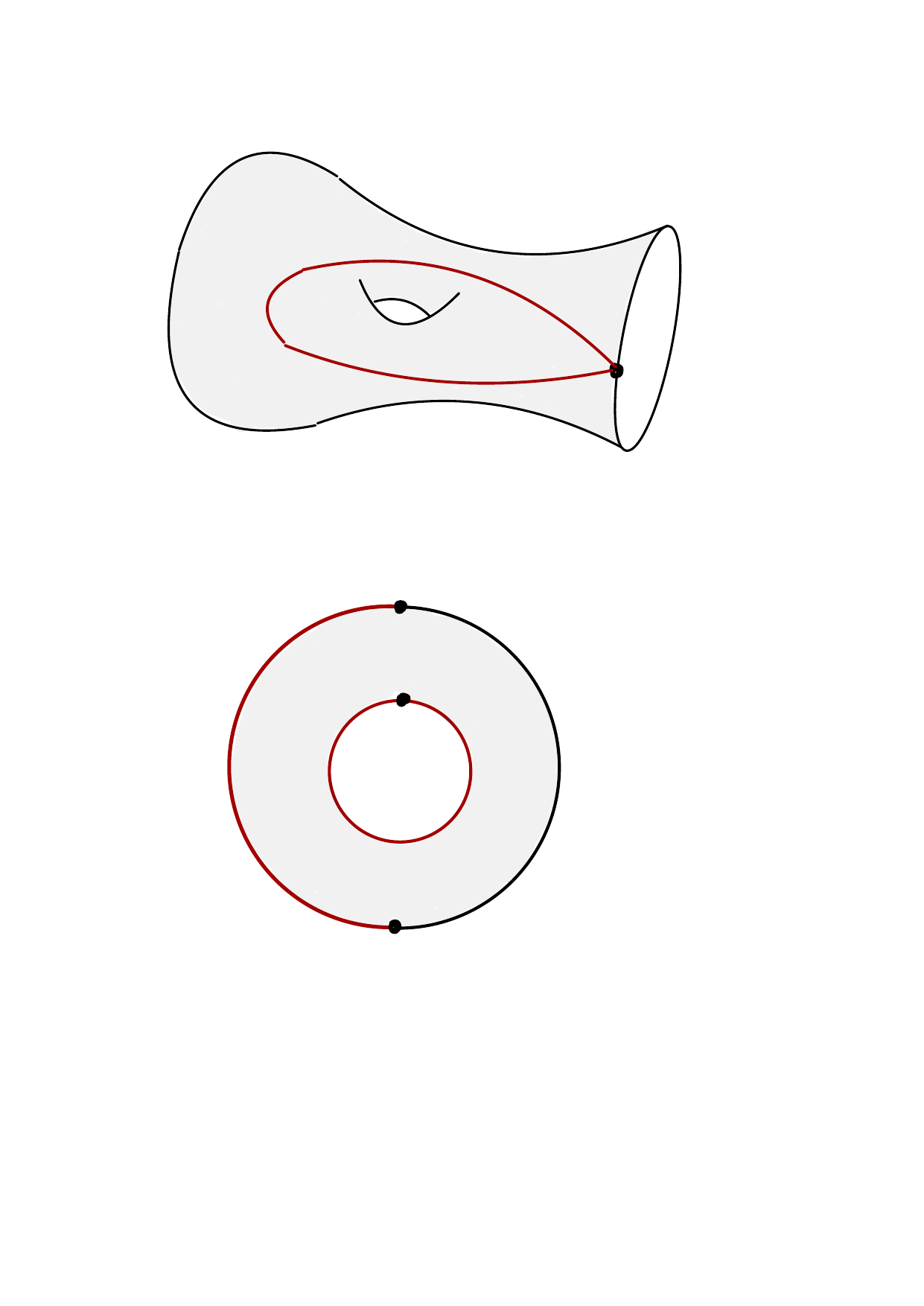}
	\end{center}
	The cut surface is connected, with two boundary components, and has Euler characteristic $\chi(\Sigma')=\chi(\Sigma)+1=0$. It is 
	thus an annulus $\Sigma'=\Sigma_0^2$. 
	\begin{center}
		\includegraphics[width=0.2\textwidth]{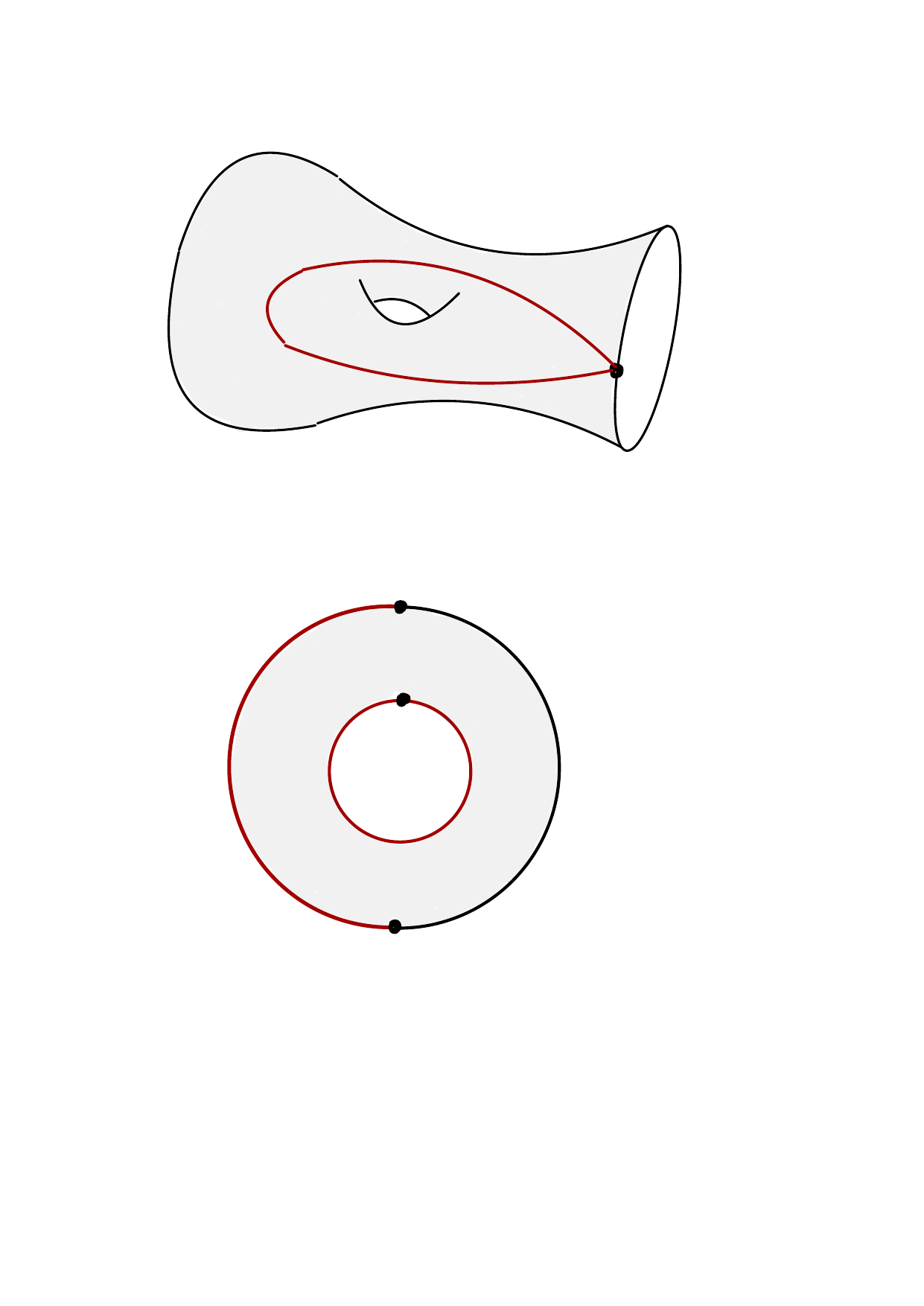}
	\end{center}
	The set 
	$\V'$ given by one vertex on one of the boundaries and two vertices on the other.  
\end{example}
\begin{example}
Consider a similar example of 	$\Sigma=\Sigma^1_1$, with $\#\V=2$ where one vertex is in the interior and one vertex on the boundary. Cut along a loop based at the interior vertex and winding around the handle. The result of this type (b) cut is a 3-holed sphere (pair of pants) with one vertex on each boundary component. 
\end{example}

Suppose more generally that $(\Sigma',\V')$ is obtained from 
$(\Sigma,\V)$ by iterated cuts of the type considered above. The quotient map 
induces a map  of fundamental groupoids, and hence a map of moduli spaces.

\begin{tcolorbox}
	\begin{proposition}\label{prop:partialcut}
		Suppose that $(\Sigma,\V)$ satisfies (A1),(A2), and that $(\Sigma',\V')$ is obtained by $N$ of cuts of the type considered above. Then the induced map 
		\[\M_G(\Sigma,\V)\to \M_G(\Sigma',\V')\]
		is an embedding as a submanifold of codimension $N\dim G$, 
		equivariant for the inclusion $G^\V\hra G^{\V'}$. The 2-form on $\M_G(\Sigma,\V)$ is the pullback of that on $\M_G(\Sigma',\V')$ under the embedding, and the boundary holonomy map $\Phi$ is given by restriction of 
		$\Phi'$ followed by the projection $G^{\E'}\to G^\E$. 
	\end{proposition}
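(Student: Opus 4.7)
The plan is to induct on $N$, reducing to a single cut, and then deploy the gluing-pattern machinery of Proposition \ref{prop:2form} to transport everything from $(\Sigma',\V')$ back to $(\Sigma,\V)$.

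For the case $N=1$, let $q\colon (\Sigma',\V')\to (\Sigma,\V)$ be the quotient morphism of pairs that reglues the cut. By the functoriality set up in Section \ref{subsec:general}, $\M_G(q)$ is a map $\M_G(\Sigma,\V)\to \M_G(\Sigma',\V')$, equivariant for the pullback group homomorphism $G^\V\hra G^{\V'}$ along the surjection $q|_{\V'}\colon \V'\to \V$. It is injective because the induced groupoid morphism $q_*\colon \Pi(\Sigma',\V')\to \Pi(\Sigma,\V)$ is surjective: any $\kappa'$ in the image is determined on every arrow of $\Pi(\Sigma,\V)$ by evaluating on a chosen lift. Combining the count $\#\V'-\chi(\Sigma')=\#\V-\chi(\Sigma)+1$ recorded in Section \ref{subsec:cut} with the dimension formula from Proposition \ref{prop:free}, the image is an embedded submanifold of codimension $\dim G$. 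Iterating $N$ times gives codimension $N\dim G$ and proves the first two claims.

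For the pullback of the 2-form, I would use compatible gluing patterns. Given any gluing pattern $(\wh\Sigma,\wh\V)\twoheadrightarrow (\Sigma',\V')$, postcomposition with the iterated quotient $(\Sigma',\V')\to (\Sigma,\V)$ is again a gluing pattern, differing only by promoting the $2N$ cut edges to paired edges. Writing $J\colon \M_G(\Sigma,\V)\hra \M_G(\Sigma',\V')$ for the map in question, and $\iota,\iota'$ for the gluing embeddings of the two moduli spaces into $\M_G(\wh\Sigma,\wh\V)$, this compatibility yields the factorization $\iota=\iota'\circ J$. Since $\omega=\iota^*\wh\omega$ and $\omega'=(\iota')^*\wh\omega$ by construction, we deduce $\omega=J^*\omega'$. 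For the momentum map, the set $\E$ is precisely the subset of $\E'$ consisting of boundary edges not created by cuts; functoriality of holonomy along each such edge immediately gives $\Phi=\pr\circ\Phi'\circ J$, where $\pr\colon G^{\E'}\to G^\E$ is the forgetful projection.

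The main obstacle is really bookkeeping. One must keep orientation conventions straight so that, for a single cut along $\gamma$, the two resulting edges $\ez_1,\ez_2\in \E'$ satisfy $q_*([\ez_1])=q_*([\ez_2])^{-1}$ in $\Pi(\Sigma,\V)$, reflecting the fact that they inherit opposite boundary orientations as the two sides of $\gamma$; and one must accommodate the various ways cuts of types (a)--(e) modify the vertex set. Both points are subsumed by the fact that $q$ is a well-defined morphism of pairs and $\M_G$ is a functor, so the argument above applies uniformly, and any new interior vertices produced by cuts are harmless thanks to the final assertion of Proposition \ref{prop:2form}.
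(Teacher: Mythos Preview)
Your proposal is correct and follows essentially the same approach as the paper: both arguments hinge on the observation that a single gluing pattern $(\wh\Sigma,\wh\V)$ serves simultaneously for $(\Sigma,\V)$ and $(\Sigma',\V')$ (differing only in which edges are paired), so that the embeddings factor as $\M_G(\Sigma,\V)\hra \M_G(\Sigma',\V')\hra \M_G(\wh\Sigma,\wh\V)$ and both $\omega,\omega'$ arise as pullbacks of $\wh\omega$. The paper states this in two sentences without the inductive reduction to $N=1$; your version adds the explicit codimension count and the equivariance and momentum-map bookkeeping, but the substance is the same.
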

\end{tcolorbox}

\begin{proof}
	Cutting further, we arrive at a gluing diagram $(\wh{\Sigma},\wh{\V})$ for $(\Sigma,\V)$, which also serves as a gluing diagram  for $(\Sigma',\V')$ by omitting the edge identifications for the quotient map $\Sigma'\to \Sigma$. 
	The various claims are evident from this description. In particular, 
	the forms $\omega,\omega'$ on the two moduli spaces are obtained by pullback of $\wh{\omega}$ 
	via embeddings 
	\[ \M_G(\Sigma,\V)\hra \M_G(\Sigma',\V')\hra \M_G(\wh{\Sigma},\wh{\V}).\qedhere\]	
\end{proof}

\subsection{Gluing equals reduction}\label{subsec:gluingreduction}
Sometimes, it is desirable to cut surfaces along loops not containing any vertices. 
To do so, simply add an  interior vertex $\vz$ on the loop, and use the same method as before. 

Suppose that $(\Sigma,\V)$ satisfies the assumptions (A1), (A2), and let $\Sigma'$ be the surface obtained by cutting $\Sigma$ along an embedded circle $C\subset \on{int}(\Sigma)$ with $C\cap \V=\emptyset$. 
	\begin{center}
	\includegraphics[width=0.5\textwidth]{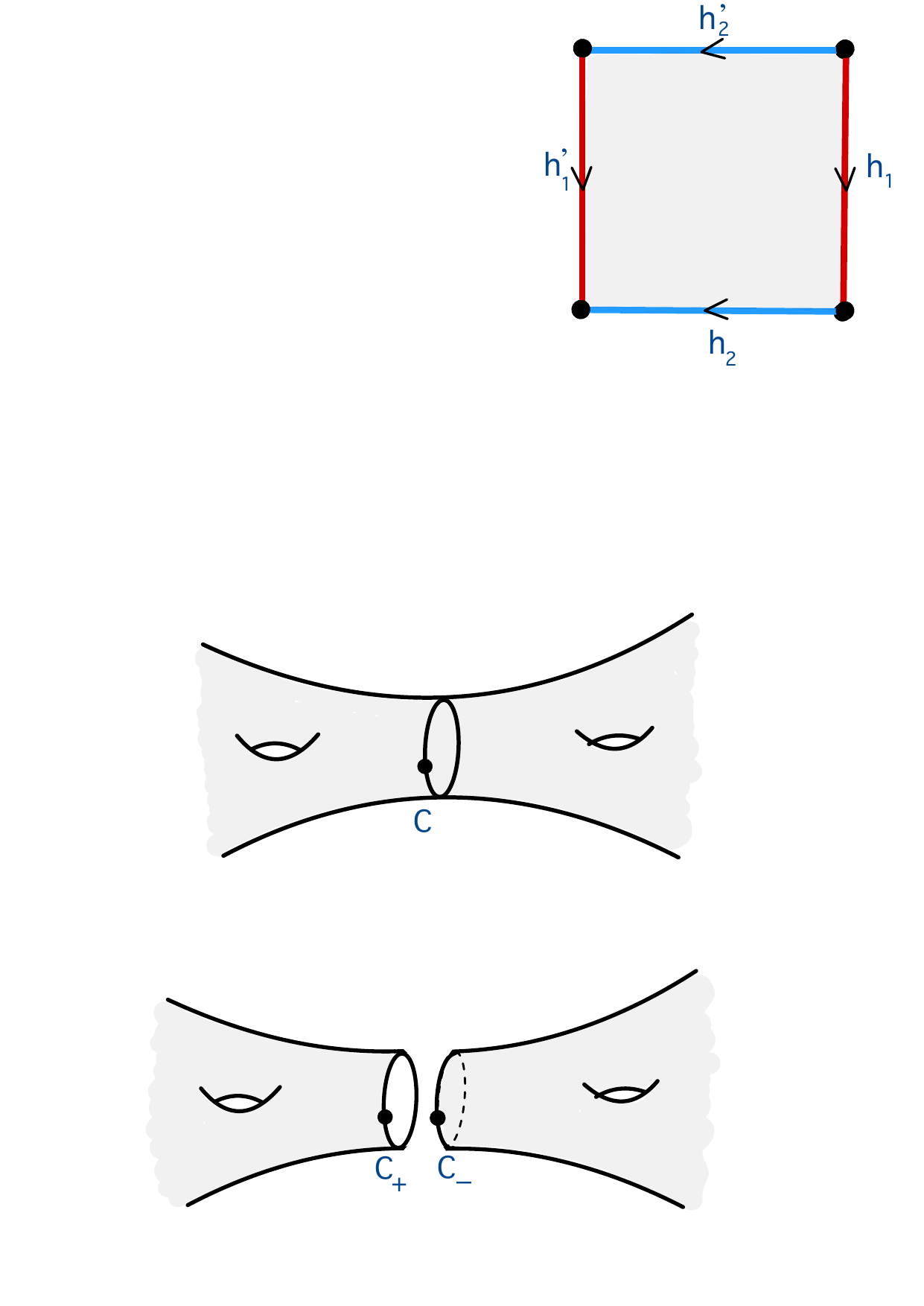}
\end{center}
Conversely, letting $C_\pm\subset \Sigma'$ be resulting new boundary components, the surface $\Sigma$ is recovered 
as a quotient $\Sigma=\Sigma'/\sim$ 
 by gluing along $C_\pm$. 
	\begin{center}
	\includegraphics[width=0.5\textwidth]{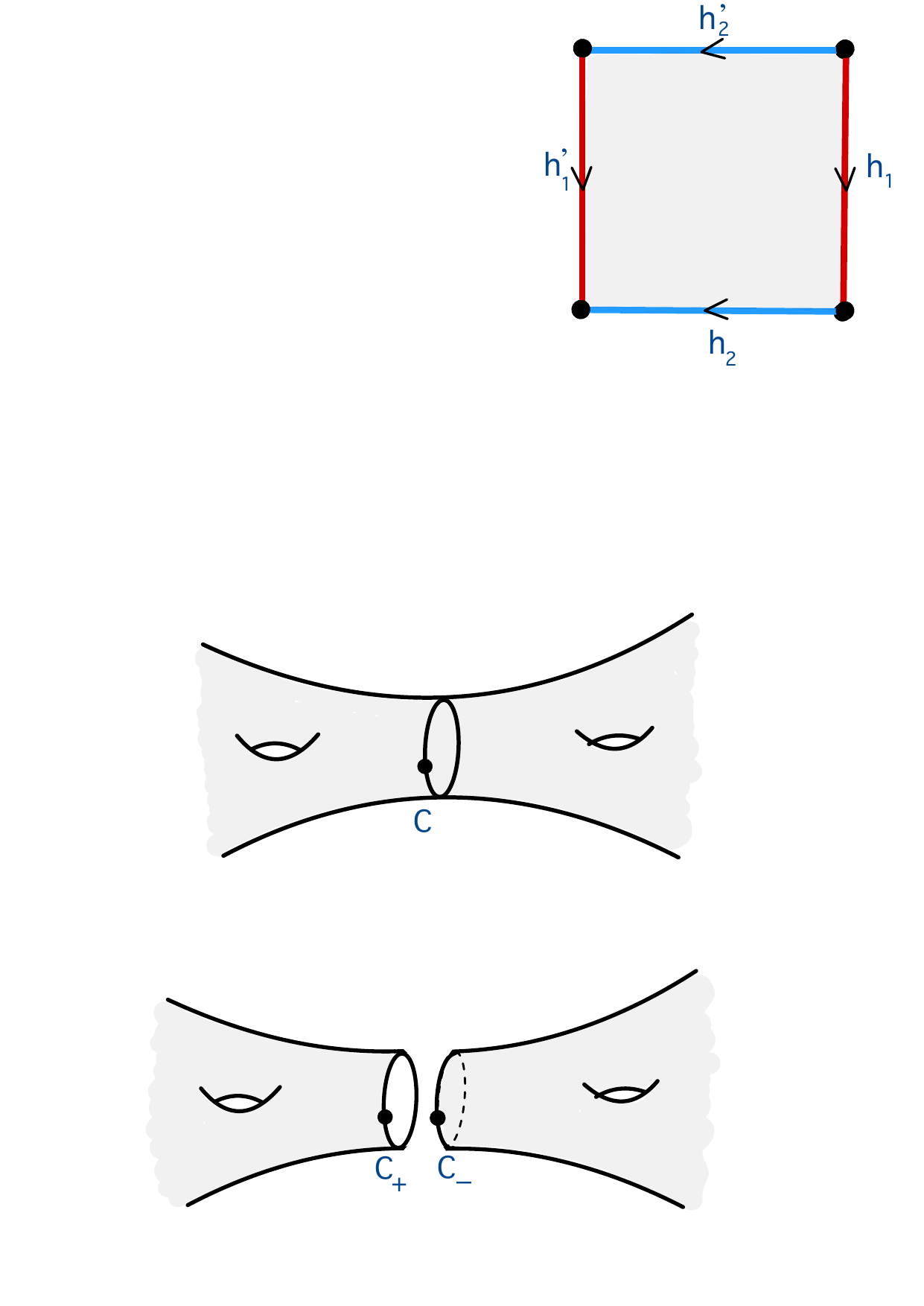}
\end{center}
We may regard $\V$ as a subset of $\Sigma'$ by taking the pre-image under the quotient map. The pair $(\Sigma',\V)$ does not satisfy (A2) since $\V$ does not meet all boundary components.  To fix this issue, choose a point $\vz\in C$, 
with pre-images $\vz_\pm \in C_\pm$ and let 
\[ \V'=\V\sqcup \{\vz_+,\vz_-\}.\]
Then $(\Sigma',\V')$ satisfies (A2). Note also that if $(\Sigma,\V)$ satisfies (A3) then so does $(\Sigma',\V')$. 

The set $\E'$ of boundary edges of $\Sigma'$ is $\E'=\E\sqcup \{\ez_+,\ez_-\}$ where $\ez_\pm$ are defined by the 
newly created boundaries $C_\pm$. 
Let $\Phi_\pm=\Phi_{\ez_\pm}\colon \M_G(\Sigma',\V')\to G$ denote the holonomies along these new boundary edges. 
The two maps 
\[ (\Sigma,\V)\lra  (\Sigma,\V\cup\{\vz\}) \longleftarrow (\Sigma',\V')\] 
induce  an  inclusion and projection 
\begin{equation}\label{eq:iotapi}
 \M_G(\Sigma,\V) \stackrel{\pi}{\longleftarrow} 
 \M_G(\Sigma,\V\cup\{\vz\}) 
 \stackrel{\iota}{\longrightarrow} \M_G(\Sigma',\V'),
\end{equation}
and the 2-forms are related by 
\[ \iota^*\omega'=\pi^*\omega.\]
We may think of this construction as a (quasi-)symplectic reduction: 
\[ \M_G(\Sigma,\V)=\M_G(\Sigma',\V')\qu G,\]
analogous to the `gluing equals reduction' result for Hamiltonian loop group spaces 
\cite{me:lo}.  
To make it more explicit, note that 
the image of the inclusion $\iota$ is the submanifold 
\[ Z=\{x|\ \Phi_+(x)=\Phi_-(x)^{-1}\}\subset \M_G(\Sigma',\V').\]
The diagonal $G\subset G\times G$-action (where the two $G$'s correspond to 
the two vertices $\vz_\pm$) is identified with the $G$-action 
on $\M_G((\Sigma,\V\cup\{\vz\})$ corresponding to $\vz$. As explained in Example \ref{ex:quotients} (see also Proposition \ref{prop:2form}) this $G$-action is free and  proper, and $\pi$ is the corresponding quotient map. 

\begin{remark}
A bit more generally, one can choose several points $\vz_1,\ldots,\vz_l\in C$, with corresponding pre-images 
$\vz_{i,\pm}$. Letting $\V'=\V\sqcup\{\vz_{i,\pm},i=1,\ldots,l\}$, one then has 
\[ \M_G(\Sigma,\V)=\M_G(\Sigma',\V')\qu G^l
\] 
where the right hand side is defined as the quotient of $\M_G(\Sigma,\V\sqcup\{\vz_i|\ i=1,\ldots,l\})$. In the opposite direction, this corresponds to the gluing of two boundary circles with the same number of base points. 
\end{remark}

We illustrate the `gluing equals reduction' principle with several examples.

\begin{example}
By cutting each of the handles of $\Sigma=\Sigma_g^r$, 
\begin{center}
	\includegraphics[width=0.40\textwidth]{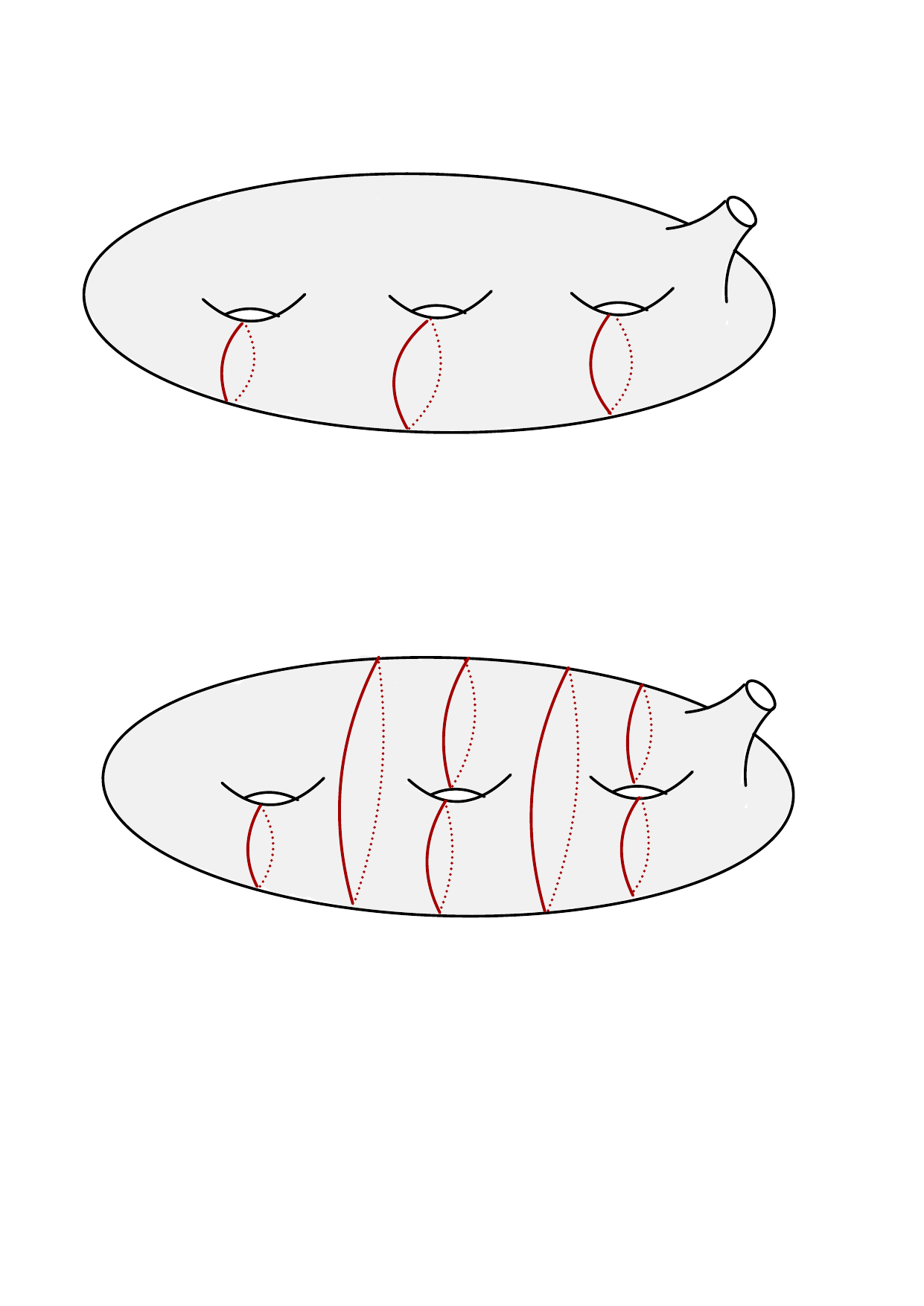}
\end{center}
one obtains
$\Sigma'=\Sigma_0^{r+2g}$ (a sphere with $r+2g$ holes). 
Given $\V\subset\p\Sigma$, let $\V'\subset \Sigma'$ be obtained by adding one pair of vertices for each pair of gluing circles $C_\pm$. 
Then 
\[ \M_G(\Sigma_g^r,\V)=\M_G(\Sigma_0^{r+2g},  \V')\qu G^g,\]
a $g$-fold reduction (with one copy of $G$ for each gluing circle). (The order of the reduction does not matter.) 
\end{example}

\begin{example}
Suppose $\Sigma=\Sigma_g^r$ with $\chi(\Sigma)=2-2g-r<0$. Then we may choose a 
\emph{pants decomposition}, given by a system of embedded circles cutting the surface into copies of 
$\Sigma_0^3$:
\begin{center}
	\includegraphics[width=0.40\textwidth]{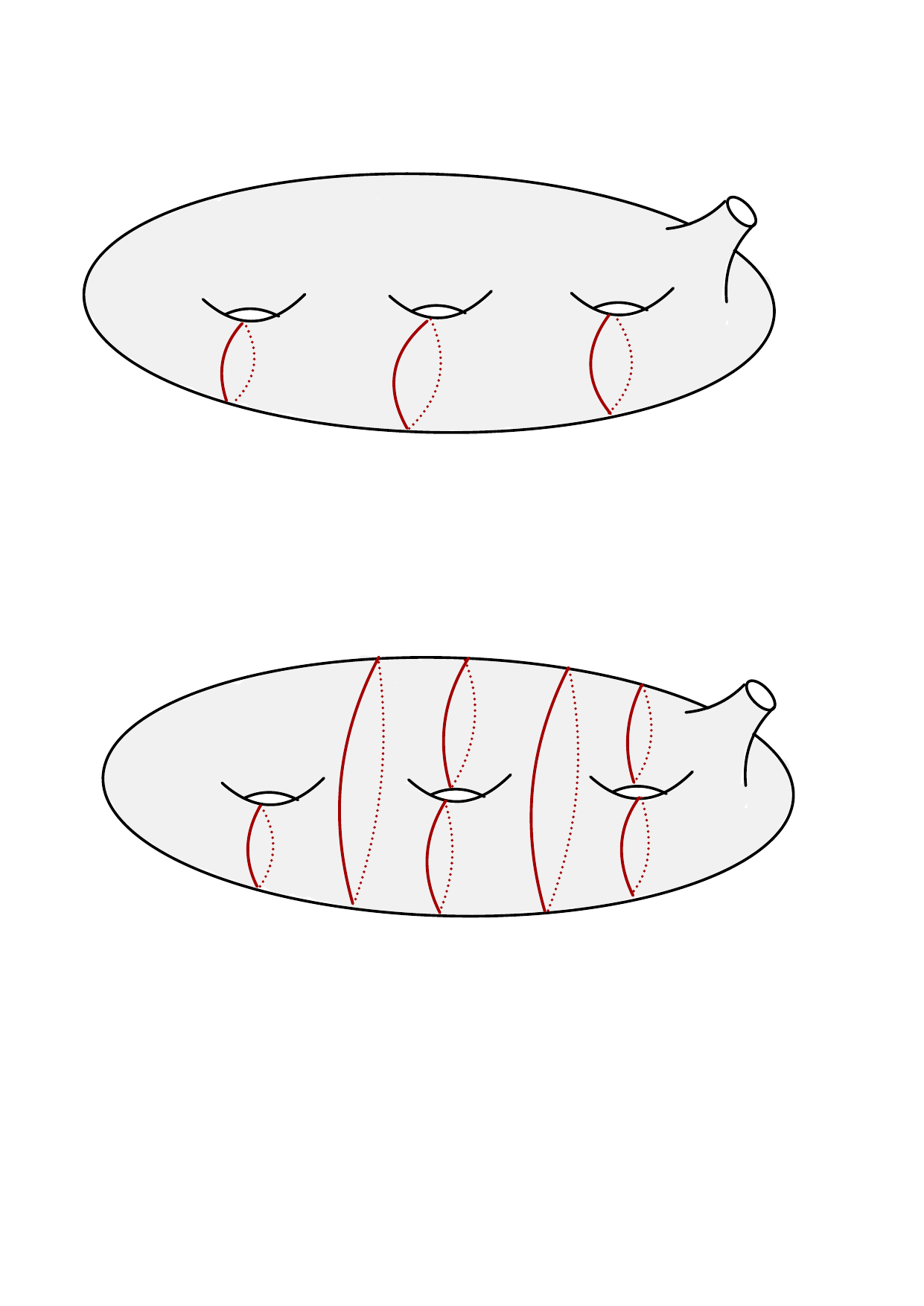}
\end{center}
Since $\chi(\Sigma_0^3)=-1$, the number of pants in such a decomposition is $-\chi(\Sigma)=2g+r-2$; the number of gluing circles is $3g+r-3$. After cutting, we obtain 
a disjoint union 
\[ \Sigma'=\Sigma_0^3\sqcup \cdots \sqcup \Sigma_0^3.\]
  Given $\V\subset \p\Sigma$, let $\V'\subset \p\Sigma'$ be obtained by adding a pair of vertices for each cut. Then 
\[ \M_G(\Sigma_g^r,\V)=\M_G(\Sigma_0^3\sqcup \cdots \sqcup \Sigma_0^3,\V')\qu G^{3g+r-3}.\]
\end{example}

\begin{example}\label{ex:cylindergluing}
Suppose $C\subset \on{int}(\Sigma)$ is homotopic to a boundary circle. 
\begin{center}
	\includegraphics[width=0.40\textwidth]{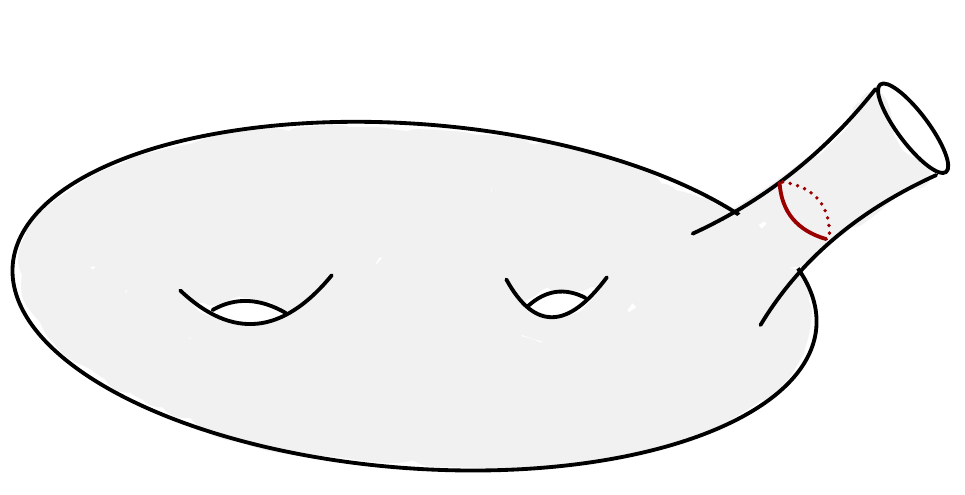}
\end{center}
Cutting along this circle produces a disjoint union of a surface diffeomorphic to $\Sigma$, and a cylinder.  
\[ \Sigma'=\Sigma\sqcup \Sigma_0^2.\]
Suppose for simplicity that $\V\subset\p\Sigma$ consists of one vertex on each boundary component. 
Then $\Sigma'$ has the same property, with $\V'=\V\cup \{\vz_1,\vz_2\}$, 
and 
\[ \M_G(\Sigma,\V)=\big(\M_G(\Sigma,\V)\times \M_G(\Sigma_0^2,\{\vz_1,\vz_2\})\big)\qu G\]
That is, the moduli space of the cylinder serves as the identity under reduction. As we shall explain in Section \ref{sec:quasisymplecticgroupoid} below, this property turns the moduli space of a cylinder into a \emph{quasi-symplectic groupoid}.  
\end{example}

\subsection{The symplectic structure for surfaces without boundary}
As another example for the `gluing reduction principle', consider the case that 
the loop $C\subset \on{int}(\Sigma)$ is contractible.  	
\begin{center}
		\includegraphics[width=0.40\textwidth]{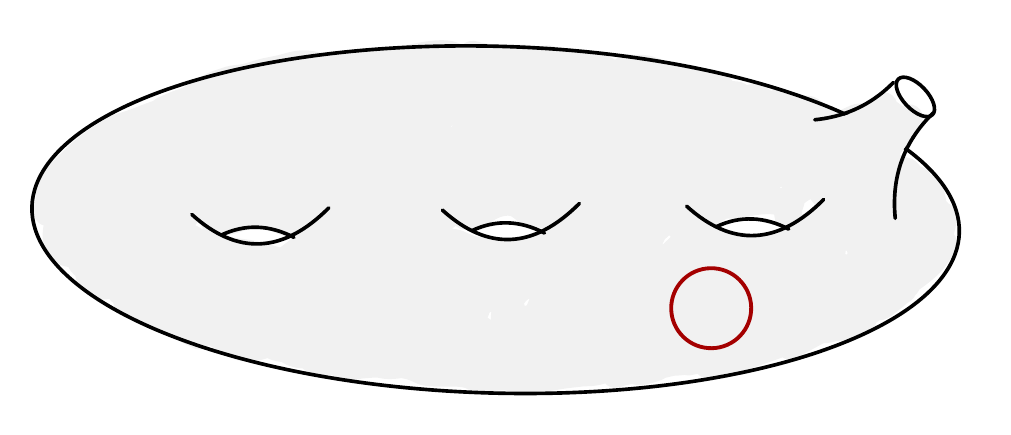}
\end{center}
Cutting along such a circle gives a disjoint union 
\[ \Sigma'=\Sigma''\sqcup \Sigma_0^1\]
where $\Sigma''$ is obtained from $\Sigma$ by removing  the open disk bounded by $C$. Conversely, we may think of 
$\Sigma$ as being obtained from $\Sigma''$ by ``capping off" a boundary component. Since 
$\M_G(\Sigma_0^1,\{\vz\})$ is just a point, we obtain  
\[ \M_G(\Sigma,\V)=\M_G(\Sigma'',\V'')\qu G=\Phi_C^{-1}(e)/G\]
where $\Phi_C\colon \M_G(\Sigma'',\V'')\to G$ is the holonomy around $C$. 
Note that the pair $(\Sigma'',\V'')$ satisfies the three assumptions (A1),(A2),(A3) if and only if $(\Sigma,\V)$ does. 
Consequently, in this case the 2-forms on the respective moduli spaces satisfy the minimal degeneracy property 
\eqref{it:c}.

This suggests a construction of the symplectic structure on the moduli space 
\[ \M_G(\Sigma)=\M_G(\Sigma,\emptyset)\] 
for a compact, connected surface $\Sigma$ \emph{without} boundary: Choose a  loop $C\subset \Sigma$ 
bounding a disk, as well as a base point $\vz\in C$, and let $\Sigma''$ be the surface with boundary $C$ obtained by removing the interior of the disk.  Thus, $\Sigma''\cong \Sigma_\gz^1$ where $\gz$ is the genus of the surface. 
Let $\omega''$ be the 2-form on $\M_G(\Sigma'',\{\vz\})$, and let 
$\Phi\colon \M_G(\Sigma'',\{\vz\})\to G$ be the holonomy around the boundary. (Using standard generators 
of the fundamental group, this is the product-of-commutators map 
$\Phi\colon G^{2\gz}\to G$.) 
Then 
\[ Z=\Phi^{-1}(e)=\M_G(\Sigma,\{\vz\})=\Hom(\pi_1(\Sigma,\vz),G),\]
and so 
\[ \M_G(\Sigma)=Z/G.\]
At this stage, we are faced with the problem that $e$ is \emph{not} a regular value of $\Phi$. 
By Corollary \ref{cor:goldman},  $\Phi$ has maximal rank exactly at those points $\kappa$ for which the stabilizer under the $G$-action is discrete. The set of elements in $Z$ having this property is a submanifold 
\[\iota\colon  Z_{\on{reg}}\hra \M_G(\Sigma'',\{\vz\}).\]
The property \eqref{it:a} from Theorem \ref{th:2form} shows that the pullback $\iota^*\omega''$ is 
closed. By \eqref{eq:explicitkernel}, the 2-form $\omega''$ on $\M_G(\Sigma'',\{\vz\})$ is non-degenerate at all points 
of $Z$, and by the momentum map property \eqref{it:b} the null foliation of 
 $\iota^*\omega''$ consists exactly of the orbit directions. Hence, if the $G$-action on $Z_{\on{reg}}$ is proper
 (which is automatic if $G$ is compact) then 
 \[ \M_G(\Sigma)_{\on{reg}}=Z_{\on{reg}}/G\] 
 has at worst orbifold singularities, and  $\iota^*\omega''$ descends to a symplectic 2-form $\omega$ on this space. 

\begin{remark}
If $G$ is compact, one gets better control over the singularities. 
In fact, $\Phinv(e)/G$ can be realized as a
singular symplectic quotient in this case, and $\M_G(\Sigma)$ has the structure of a stratified symplectic space in the sense of Sjamaar-Lerman \cite{sj:si}. 
\end{remark}

\subsection{Lagrangian boundary conditions}
In the previous section, we used reduction to arrive at 2-forms on moduli spaces that are actually symplectic, rather than just 
quasi-symplectic. A different method, producing many important examples, was introduced by \v{S}evera in \cite{sev:mod}. 

Suppose $(\Sigma,\V)$ satisfies assumptions (A1),(A2),(A3). Suppose furthermore that we are given a collection $\ul{H}=\{H_\ez\}$
of closed Lagrangian Lie subgroups
\[ H_\ez\subset G,\]
one for each edge. (A subgroup $H\subset G$ is called Lagrangian if its Lie algebra 
$\h\subset \g$ is Lagrangian, that is, $\h^\perp=\h$.)  
\begin{tcolorbox}
\begin{proposition}[\v{S}evera \cite{sev:mod}]	\label{prop:severa2}
	Suppose that $\h_{\ez}\cap \h_{\ez'}=0$ whenever $\sz(\ez')=\tz(\ez)$. 
Then 
\[ \M_G(\Sigma,\V;\ul{H})=\Phi^{-1}(\prod_{\ez\in \E} H_\ez)\] 
is a smooth submanifold, and the pullback of the 2-form $\omega$ to this submanifold is symplectic. 		
	\end{proposition}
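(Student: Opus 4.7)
The proposition asserts three things — transversality of $\Phi$ to $\prod_{\ez}H_\ez$, closedness of the pullback of $\omega$, and nondegeneracy of the pullback — and I would address them in that order. Closedness is the quickest: by Theorem~\ref{th:2form}\eqref{it:a}, $\d\omega = -\sum_{\ez}\Phi_\ez^*\eta$, and on the preimage each $\Phi_\ez$ factors through $H_\ez$, so it suffices to check $\eta|_{H_\ez}=0$. A Lagrangian subalgebra $\h_\ez$ is in particular isotropic, so the metric restricts to zero on $\h_\ez$, and since $\theta^L|_{H_\ez}$ is $\h_\ez$-valued, the Cartan $3$-form $\eta = \tfrac{1}{12}\theta^L\cdot[\theta^L,\theta^L]$ vanishes identically on $H_\ez$.

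For transversality at a point $\kappa$ with $g=\Phi(\kappa)$, I need $\on{ran}(T\Phi|_\kappa) + T_g(\prod_{\ez}H_\ez) = T_g G^{\E}$, equivalently that the intersection of annihilators in $T^*_gG^\E$ is zero. A covector annihilating $T_g\prod H_\ez$ is edge-by-edge $\h_\ez^\perp = \h_\ez$-valued (using that $\h_\ez$ is Lagrangian), while by Proposition~\ref{prop:modulirank} one annihilating $\on{ran}(T\Phi|_\kappa)$ pushes forward from a momentum-map covector $\sum_{\ez} g_\ez^*(\theta^R\cdot\xi_{\tz(\ez)} + \theta^L\cdot\xi_{\sz(\ez)})$ with $\xi\in (\g^\V)_\kappa$. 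Equating the two descriptions edge by edge, the vertex contribution at any $\vz$ — where there is a unique incoming boundary edge $\ez$ and a unique outgoing one $\ez'$ — forces $\xi_\vz\in \h_\ez\cap\h_{\ez'}$, which is zero by hypothesis. Hence $\xi=0$ and the annihilator intersection is trivial.

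For nondegeneracy, set $L = T_\kappa\M_G(\Sigma,\V;\ul H) = (T\Phi)^{-1}(T_g\prod H_\ez)$ and suppose $v\in L$ satisfies $\omega(v,w)=0$ for all $w\in L$. Since $\ker T\Phi\subset L$, the covector $\omega^\flat(v)$ vanishes on $\ker T\Phi$, hence equals a momentum-map covector $\sum_\ez g_\ez^*(\theta^R\cdot\xi_{\tz(\ez)}+\theta^L\cdot\xi_{\sz(\ez)})$ for some $\xi\in\g^\V$. Testing $\omega(v,\cdot)$ against those generating vector fields $\zeta_{\M_G}$ that lie in $L$ — a subspace cut out by the compatibility $\zeta_{\tz(\ez)} - \Ad_{g_\ez}\zeta_{\sz(\ez)} \in \h_\ez$ — and using the isotropy of each $\h_\ez$ together with the same consecutive-Lagrangians argument as in the transversality step, I would force $\xi_\vz=0$ at every vertex, hence $v\in\ker\omega$. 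The explicit description \eqref{eq:explicitkernel} of $\ker\omega$ combined with the constraint $T\Phi(v)\in\prod TH_\ez$ and a final application of the hypothesis $\h_\ez\cap\h_{\ez'}=0$ then yields $v=0$, using also the minimal degeneracy statement \eqref{it:c}.

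\textbf{Main obstacle.} The principal difficulty is the linear-algebraic bookkeeping in the nondegeneracy step, where contributions of $v$ at different vertices are coupled through the boundary edges that meet at them. The paper's later Dirac-geometric framework should make the whole argument transparent: $(\omega,\Phi)$ defines a morphism into $G^\E$ with its twisted Cartan-Dirac structure, and $\prod_\ez H_\ez$ equipped with the zero $2$-form is a Lagrangian subbundle whose backward image is automatically a Lagrangian (hence symplectic) Dirac structure on the preimage. I would likely defer the hands-on verification sketched above until the machinery of Section~\ref{sec:dirac} is in place.
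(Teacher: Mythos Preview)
Your closedness argument matches the paper's exactly, and your decision to ultimately defer nondegeneracy to the Dirac cross-section machinery of Section~\ref{sec:dirac} is precisely what the paper does (see Section~\ref{subsec:lagrangian}). The transversality step is where you and the paper genuinely diverge. The paper gives a self-contained group-theoretic argument: for each boundary circle $C$, the hypothesis $\h_\ez\cap\h_{\ez'}=0$ at consecutive edges makes the iterated product map $\prod_{\ez\in\E_C}H_\ez\to G$ a submersion, so $Q=\prod_\ez H_\ez$ is transverse to the $G^\V$-orbits in $G^\E$, and equivariance of $\Phi$ then gives transversality of $\Phi$ to $Q$. Your route via Proposition~\ref{prop:modulirank} is also correct---combining the stabilizer relation $\xi_{\sz(\ez)}=\Ad_{g_\ez^{-1}}\xi_{\tz(\ez)}$ with the Lagrangian annihilator condition does force $\xi_\vz\in\h_\ez\cap\h_{\ez'}=0$---but Proposition~\ref{prop:modulirank} is itself proved only through the Dirac framework (Propositions~\ref{prop:kernel} and~\ref{prop:range}). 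So while the paper confines its dependence on Section~\ref{sec:dirac} to the nondegeneracy clause, your argument leans on that section already for transversality. The paper's approach buys an elementary, forward-reference-free proof of smoothness; yours is shorter once Proposition~\ref{prop:modulirank} is available.

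Your direct nondegeneracy sketch contains a genuine gap. From $\omega^\flat(v)\in\on{ann}(\ker T\Phi)=(T\Phi)^*T_g^*G^\E$ you conclude that $\omega^\flat(v)$ has the momentum-map form $\tfrac12\sum_\ez\Phi_\ez^*(\theta^R\cdot\xi_{\tz(\ez)}+\theta^L\cdot\xi_{\sz(\ez)})$ for some $\xi\in\g^\V$. But those covectors span only $\omega^\flat(S_\kappa)$, where $S_\kappa$ is the tangent space to the $G^\V$-orbit, and this is a proper subspace of $\on{ann}(\ker T\Phi)$ whenever $\ker\omega|_\kappa\neq 0$; equivalently, the linear map $\g^\V\to T_g^*G^\E$, $\xi\mapsto\nu_\xi$, fails to be surjective at points $g$ where (on some boundary circle with $n$ edges) $(-1)^n\Ad$ of the total holonomy has $1$ as an eigenvalue. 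So vanishing on $\ker T\Phi$ does not by itself put $\omega^\flat(v)$ into the $\g^\V$-parametrized family, and the subsequent vertex-by-vertex argument cannot start. You correctly identify this bookkeeping as the main obstacle and fall back on the Dirac cross-section theorem; that is the paper's solution as well, so the deferral is sound, but the hands-on sketch as written does not go through.
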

\end{tcolorbox}
\begin{proof}
We shall use the following fact: If $\phi_1\colon K_1\to K$ and $\phi_2\colon K_2\to K$ are two morphisms of Lie groups 
such that $(T_e\phi_1)(\k_1)+(T_e\phi_2)(\k_2)=\k$, then the maps $\phi_1,\phi_2$ are transverse, and the product map 
\[ \Mult_K\circ (\phi_1\times \phi_2)\colon K_1\times K_2\to K\] 
is a submersion.

The submanifold
\begin{equation}\label{eq:q}
 Q=\prod_{\ez\in \E} H_\ez\subset G^\E.\end{equation}
is a product  $Q=\prod_C Q_C$ over the set of boundary components $C\subset \p\Sigma$, where 
 $Q_C=\prod_{\ez\in \E_C} H_\ez\subset G^\E_C$ is the product over the set $\E_C\subset \E$ of edges 
 that are contained in $C$. Let $\V_C=\V\cap C$. The quotient map for the $G^{\V_C}$-action on 
 $G^{\E_C}$ may be identified with the product map $G^{\E_C}\to G$ (defined after choice of an initial vertex in $\V_C$). 
 By the fact above, this quotient map remains a submersion when restricted to $Q_C$. It follows that $Q_C$ is transverse to the $G^{\V_C}$-orbits, and hence $Q$ is transverse to the $G^\V$-orbits. Since $\Phi$ is $G^\V$-equivariant, it follows that  
 $Q$ is transverse to the map $\Phi$, and hence $\Phi^{-1}(Q)$ is a submanifold. 
 
Let  $\iota\colon \Phi^{-1}(Q)\to \M_G(\Sigma,\V)$ be the inclusion map.  
Since the subgroups $H_\ez\subset G$ are Lagrangian, the pullback of $\eta\in \Omega^3(G)$ to these subgroups vanishes. 
Hence, applying $\iota^*$ to the identity $\d\omega=-\sum_{\ez}\Phi_\ez^*\eta$, we obtain 
\[ \d \iota^*\omega=0.\] 
The fact that $\iota^*\omega$ is symplectic may be proved using the cross-section theorem 
from Dirac geometry. We will give this argument in Section \ref{subsec:lagrangian}.
\end{proof}

The argument that \eqref{eq:q} is transverse to $G^\V$-orbits, and hence is transverse to $\Phi$, 
works in the same way provided that every boundary component contains at least one vertex $\vz$ such that 
$\h_\ez+\h_{\ez'}=\g$ for $\sz(\ez')=\vz=\tz(\ez)$. 

\begin{remark}
The assumption that the Lagrangian Lie subalgebras $\h_\ez\subset\g$ correspond to \emph{closed} subgroups is quite restrictive. 
The proposition extends to the more general setting of just having integration to \emph{immersed subgroups}, by defining 
$\M_G(\Sigma,\V;\ul{H})$ as a fiber product of $\prod_{\ez\in \E} H_\ez$ with $\M_G(\Sigma,\V)$ over $G^\E$. 
\end{remark}

\begin{example}
A pair of transverse Lagrangian Lie subalgebras $\h_1,\h_2\subset \g$ with transverse intersection defines a \emph{Manin triple}. 
Suppose that the inclusions maps $\h_i\subset \g_i$ integrate to Lie group morphisms $\phi_i\colon H_i\to G$. By Drinfeld's theory \cite{dri:quas}, the Manin triple determines Poisson structures on $H_1,H_2$, making them into dual Poisson Lie groups. 
Lu-Weinstein \cite{lu:gr} obtained an integration of these Poisson Lie groups  to a symplectic double Lie groupoid
\[ 
\begin{minipage}{4cm}
{
	\xymatrix{ S\ar[r]<2pt>\ar@<-2pt>[r]  \ar[d]<2pt>\ar@<-2pt>[d]  & H_2 \ar[d]<2pt>\ar@<-2pt>[d]\\
		H_1 \ar[r]<2pt>\ar@<-2pt>[r]  & \pt}
}\end{minipage}
\]
As a manifold, 
\[ S=\{(h_1,h_2,h_1',h_2')\in H_1\times H_2\times H_1\times H_2|\ 
\phi_1(h_1)\phi_2(h_2)=\phi_2(h_2')\phi_1(h_1')\}.\]
As shown in \cite{sev:mod}, the symplectic structure 
is conveniently described by picturing the elements of $S$ 
as `commuting squares',
	\begin{center}	\includegraphics[width=0.25\textwidth]{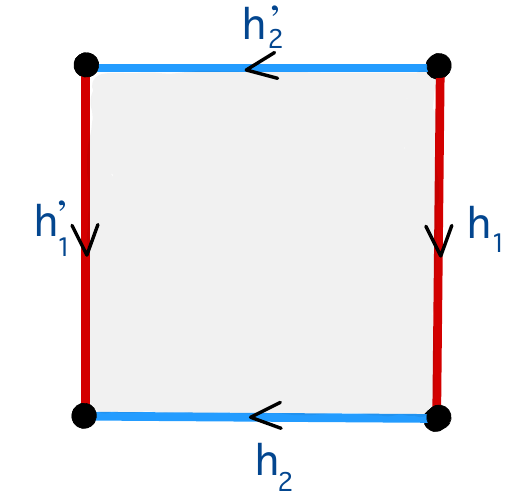}\end{center}
In fact, $S=\M_G(\Sigma,\V,\ul{H})$ where $\Sigma\cong \Sigma_0^1,\ \#\V=4$ is the 4-gon, with edges labeled 
by the groups $H_i$ as indicated. The two groupoid multiplications are understood as horizontal  and vertical concatenation
of squares.  
\end{example}

For other examples along these lines, see \cite{sev:mod}. The idea of `colouring' edges with Lagrangian subgroups can be carried much further, see \cite{alv:poi,boa:qu,boa:fis,lib:mod,lib:sypo}.

\section{Poisson structures and Goldman flows}\label{sec:goldman}
For a symplectic manifold $(M,\omega)$, any function $f\in C^\infty(M)$ defines a \emph{Hamiltonian vector field} $X_f$ by the equation 
 \begin{equation}\label{eq:hamvf} \iota(X_f)\omega=-\d f.\end{equation} 
This vector field satisfies $\L_{X_F}\omega=0$; hence its flow preserves the symplectic structure. 
The 2-form on our moduli spaces  $\M_G(\Sigma,\V)$ is usually degenerate, hence vector fields satisfying 
\eqref{eq:hamvf} may not exist, and are not unique in case they do.  As we shall explain, these issues can be resolved 
provided that $f$ is $G^\V$-invariant. For certain choices of $f$, the resulting Hamiltonian flows are the \emph{Goldman flows}.

\subsection{Hamiltonian vector fields}
Throughout,  we shall make  the assumptions (A1),(A2),(A3): All components of $\Sigma$ have non-empty boundary, and the set 
$\V$ is contained in the boundary and meets all components of the boundary. Hence $\M_G(\Sigma,\V)$ carries a $G^\V$-invariant 2-form  $\omega$ and a $G^\V$-equivariant map (boundary holonomies) 
\[ \Phi\colon \M_G(\Sigma,\V)\to G^\E,\] 
satisfying the properties listed in Theorem \ref{th:2form}. 

\begin{tcolorbox}
	\begin{theorem}
		If $f\in C^\infty(\M_G(\Sigma,\V))$ is $G^\V$-invariant, there is a unique \emph{Hamiltoian vector field} $X_f$ on $\M_G(\Sigma,\V)$ satisfying 
		\[ \iota(X_f)\omega=-\d f\]
		and such that the flow of $X_f$ fixes the boundary holonomies. (That is, $X_f$ is $\Phi$-related to $0$.) 
		These vector fields satisfy 
		\[ \L(X_f)\omega=0.\]
		On the open subset where $\Phi$ has maximal rank, the span of the Hamiltonian vector fields for invariant functions is exactly
		$\ker(T\Phi)$. 
	\end{theorem}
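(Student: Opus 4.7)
The plan is to reduce everything to pointwise linear algebra on each tangent space $T_\kappa\M_G(\Sigma,\V)$, using the minimal degeneracy property (Theorem~\ref{th:2form}(c)) as the key input. \emph{Uniqueness} is immediate: if $X_f$ and $X_f'$ both satisfy the conditions, then their difference lies in $\ker\omega\cap\ker T\Phi=\{0\}$.

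For \emph{existence}, I would work fiberwise. Fix $\kappa$, set $K=\ker(T\Phi|_\kappa)$, and study the linear map
\[
\omega^\sharp_K\colon K\longrightarrow T^*_\kappa\M_G(\Sigma,\V),\qquad v\longmapsto \iota(v)\omega.
\]
Minimal degeneracy makes $\omega^\sharp_K$ injective. The crux is to identify its image with the conormal $(\g^\V\cdot\kappa)^\circ$ to the orbit. One inclusion is direct from the momentum map formula (Theorem~\ref{th:2form}(b)): $\iota(\xi_\M)\omega$ is a pullback under $\Phi$, hence vanishes on $K$, giving $\g^\V\cdot\kappa\subseteq K^\perp$ (the $\omega$-orthogonal). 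A dimension count then finishes it: since $K\cap\ker\omega=0$, one has $\dim K^\perp=\on{rk}(T\Phi|_\kappa)$, and Proposition~\ref{prop:modulirank} identifies this rank with $\dim(\g^\V\cdot\kappa)$, forcing $K^\perp=\g^\V\cdot\kappa$ and hence $\omega^\sharp_K(K)=(\g^\V\cdot\kappa)^\circ$. Since $G^\V$-invariance of $f$ places $-\d f|_\kappa$ exactly in this conormal, a unique $X_f|_\kappa\in K$ exists, and smoothness of $X_f$ follows from uniqueness together with smooth dependence of the defining linear system. The relation $\L_{X_f}\omega=0$ then drops out of Cartan's formula:
\[
\L_{X_f}\omega=\d\,\iota(X_f)\omega+\iota(X_f)\,\d\omega=-\d(\d f)-\sum_{\ez\in\E}\Phi_\ez^*\bigl(\iota(T\Phi_\ez\cdot X_f)\,\eta\bigr)=0,
\]
using Theorem~\ref{th:2form}(a) and $T\Phi(X_f)=0$ for the last equality.

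For the \emph{span statement} on the max-rank locus, the isomorphism $\omega^\sharp_K\colon K\xrightarrow{\sim}(\g^\V\cdot\kappa)^\circ$ reduces the claim to showing that differentials at $\kappa$ of $G^\V$-invariant smooth functions fill out $(\g^\V\cdot\kappa)^\circ$. I expect this to be the only genuinely technical step, as the $G^\V$-action need not be proper: my approach will be a local slice argument on the max-rank locus, where the identity $\on{rk}(T\Phi|_\kappa)=\dim(\g^\V\cdot\kappa)$ shows that $\Phi$ is infinitesimally injective on orbit directions, so the orbit through $\kappa$ is locally embedded and $G^\V$-invariants with arbitrary prescribed differential in the conormal can be manufactured (for instance by pulling back functions along a transverse slice and extending equivariantly, or alternatively by pulling back $G^\V$-invariants on $G^\E$ along $\Phi$, which is locally a $G^\V$-equivariant submersion onto its image).
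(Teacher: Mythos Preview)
Your argument is correct and, at the level of ideas, coincides with the paper's. The paper defers the proof to the Dirac-geometric Proposition~\ref{prop:admissible}, which in turn rests on Proposition~\ref{prop:range}: the map $\omega^\flat$ restricts to an isomorphism $\ker(T\Phi|_m)\xrightarrow{\sim}\on{ann}(S_m)$, where $S_m$ is the orbit tangent space. Your fiberwise computation is exactly this isomorphism, proved the same way: one inclusion from the momentum map property~\eqref{it:b}, and equality by the dimension count $\on{rk}(T\Phi|_\kappa)=\dim(\g^\V\cdot\kappa)$. The Cartan-formula verification of $\L_{X_f}\omega=0$ is also identical.

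The main difference is packaging. The paper states the result once for arbitrary Hamiltonian spaces for a Dirac structure $A\subset\T_\eta Q$, which buys reusability (the same argument covers conjugacy classes, cross sections, etc.). Your version stays concrete but leans on Proposition~\ref{prop:modulirank} for the dimension count---and that proposition's proof in the paper \emph{is} the Dirac argument (Proposition~\ref{prop:range}). So your ``direct'' proof is not really avoiding Dirac geometry; it is citing its key consequence as a black box. That is perfectly legitimate, just worth being aware of.

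For the span statement on the max-rank locus, the paper does exactly what you propose: on the open set where the stabilizers vanish, the orbit foliation is locally fibrating, so one manufactures \emph{local} $G^\V$-invariant functions with prescribed conormal differential (your slice picture). The paper is explicit that these are local invariants; if the theorem were read as demanding global invariant functions, the claim could fail when the action is not proper. Your instinct that this is the only genuinely delicate point is correct.
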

\end{tcolorbox}

This is a mild generalization of \cite[Proposition 4.6]{al:mom}.  
The proof  is best done within the framework of Dirac geometry; see Proposition \ref{prop:admissible} below. 
Using Hamiltonian vector fields, we obtain a Poisson bracket on the space  $C^\infty(\M_G(\Sigma,\V))^{G^\V}$ 
of invariant functions, by the usual formula 
		\[ \{f,g\}=\L_{X_f}g=\omega(X_f,X_g).\]		
In this way, the moduli space for a surface with boundary  
\[ \M_G(\Sigma)=\M_G(\Sigma,\V)/G^\V\]  
becomes canonically a Poisson manifold, possibly with singularities (since the $G^\V$-action need not be free and proper). 

Suppose $(\Sigma',\V')$ is obtained from $(\Sigma,\V)$ by cutting along an embedded circle $C\subset \on{int}(\Sigma)$,  
as in Section \ref{subsec:gluingreduction}. Thus 
\[ \M_G(\Sigma,\V)=\M_G(\Sigma',\V')\qu G\cong \M_G(\Sigma,\V\sqcup\{\vz\})/G
.\]
There is a natural map 
\[ C^\infty(\M_G(\Sigma',\V'))^{G^{\V'}}\to C^\infty(\M_G(\Sigma,\V\cup\{\vz\}))^{G^{\V\cup\{\vz\}}}\cong C^\infty(\M_G(\Sigma,\V))^{G^{\V}}.\]

\begin{tcolorbox}
	\begin{proposition} 
		The map on functions just described preserves Poisson brackets. 
	\end{proposition}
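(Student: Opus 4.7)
The plan is to lift Hamiltonian vector fields through the zig-zag \eqref{eq:iotapi} and reduce Poisson-bracket preservation to a chain of pullback identities.

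Fix $f'_1, f'_2 \in C^\infty(\M_G(\Sigma',\V'))^{G^{\V'}}$ with images $f_1, f_2$, characterised by $\pi^* f_i = \iota^* f'_i$. First I would verify that the Hamiltonian vector field $X_{f'_1}$ is tangent to the image $Z = \iota(\M_G(\Sigma,\V\cup\{\vz\})) = \{\Phi_+ = \Phi_-^{-1}\}$. This is the one place where the momentum map property is essential: by the preceding theorem $X_{f'_1}$ is $\Phi'$-related to zero, so its flow preserves each of $\Phi_+$ and $\Phi_-$ individually and hence preserves $Z$. Let $\ti X_1$ be the vector field on $\M_G(\Sigma,\V\cup\{\vz\})$ with $\iota_*\ti X_1 = X_{f'_1}|_Z$. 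The $G^{\V'}$-invariance of $X_{f'_1}$ restricts, at the pair $\{\vz_+,\vz_-\}$, to invariance under the diagonal $G$-action, which corresponds precisely to the $G$-action at $\vz$ on the intermediate space; hence $\ti X_1$ is $G$-invariant and descends through the surjective submersion $\pi$ to a vector field $X_1$ on $\M_G(\Sigma,\V)$.

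Next I would identify $X_1$ with $X_{f_1}$ by invoking the uniqueness clause of the Hamiltonian vector field theorem. Using $\iota^*\omega' = \pi^*\omega$ and $\pi^* f_1 = \iota^* f'_1$,
\[ \pi^*\bigl(\iota(X_1)\,\omega\bigr) \;=\; \iota(\ti X_1)\,\iota^*\omega' \;=\; \iota^*\bigl(\iota(X_{f'_1})\,\omega'\bigr) \;=\; -\iota^* d f'_1 \;=\; -\pi^* d f_1, \]
and since $\pi$ is a surjective submersion this forces $\iota(X_1)\omega = -d f_1$. For the momentum condition, $X_{f'_1}$ preserves every component of $\Phi'$, and since $\E \subset \E'$ the descended vector field $X_1$ is $\Phi$-related to $0$. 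Thus $X_1 = X_{f_1}$ by uniqueness.

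With $\ti X_1$ simultaneously $\iota$-related to $X_{f'_1}$ and $\pi$-related to $X_{f_1}$, the bracket identity follows at once:
\[ \pi^*\{f_1,f_2\} \;=\; \L_{\ti X_1}(\pi^* f_2) \;=\; \L_{\ti X_1}(\iota^* f'_2) \;=\; \iota^*\bigl(\L_{X_{f'_1}} f'_2\bigr) \;=\; \iota^*\{f'_1,f'_2\}'. \]
Unwinding the definition of the reduction map, this is precisely the statement that $\{f'_1,f'_2\}'$ is sent to $\{f_1,f_2\}$. The principal obstacle is the tangency step for $X_{f'_1}$: once the bridging vector field $\ti X_1$ has been constructed, everything else is routine bookkeeping with pullbacks and Lie derivatives, but it is this tangency that packages the momentum map property into the reduction.
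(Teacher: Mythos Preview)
Your argument is correct and follows the same route as the paper: you show $X_{f'_1}$ is tangent to $Z$ because it is $\Phi'$-related to zero, descend it through $\pi$ by equivariance, identify the result with $X_{f_1}$ via the uniqueness clause, and conclude by the pullback chain $\pi^*\{f_1,f_2\}=\iota^*\{f'_1,f'_2\}$. The paper's proof is the same, only more tersely written.
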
	
\end{tcolorbox}

\begin{proof}
Write $Z=	\M_G(\Sigma,\V\cup\{\vz\})$, and let  
$\iota\colon Z\to \M_G(\Sigma',\V')$ be the inclusion, and 
$\pi\colon Z\to \M_G(\Sigma,\V)$ the projection. 	
Let $f',g'$ be $G^{\V'}$-invariant function on $\M_G(\Sigma',\V')$, and $f,g$ the resulting functions on 
$\M_G(\Sigma,\V)$, thus $\pi^*f=\iota^*f',\ \pi^*g=\iota^*g'$. 

Since the Hamiltonian vector field $X_{f'}$ is $\Phi'$-related to $0$, it  
is tangent to the level sets of $\Phi'$, and in particular is tangent to $Z$. By equivariance, $X_{f'}|_Z$ descends to a vector field on $\M_G(\Sigma,\V)$. This vector field is $\Phi$-related to zero, and its contraction with $\omega$ equals $-\d f$. It hence follows that $X_{f'}|_Z\sim_\pi X_f$. Using this fact, it follows that  
\[ \iota^*\{f',g'\}=\iota^* X_{f'}(g')=\pi^* X_f(g)=\pi^*\{f,g\}.\qedhere\] 
\end{proof}

\subsection{Goldman flows}
Interesting  examples of invariant functions on the moduli space may be constructed from conjugation invariant functions 
$\varphi\colon G\to \R$ and loops $\alpha\colon S^1\to \Sigma$. To define these functions 
\begin{equation}\label{eq:f} \varphi_\alpha\colon \M_G(\Sigma,\V)\to \R,\ \ 
\end{equation}
suppose first that $\alpha(0)=\alpha(1)=\vz \in \V$. Letting $\az\in \Pi(\Sigma,\V)$ be the (non-free) class
of $\alpha$ in the fundamental groupoid, and $\on{ev}_\az\colon  \M_G(\Sigma,\V)\to G$ the evaluation map
(holonomy along $\az$), we put 
\begin{equation}\label{eq:construction}
 \varphi_\alpha=\varphi\circ \on{ev}_\az.\end{equation} 
It is clear that $\varphi_\alpha$ is $G^\V$-invariant. 
\begin{lemma}
The function $\varphi_\alpha$ depends only on the \emph{free} homotopy class of $\alpha$. 
\end{lemma}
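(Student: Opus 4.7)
The plan is to reduce the statement to conjugation invariance of $\varphi$, via a careful use of the free homotopy to produce a path in the fundamental groupoid.

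Step one: observe that the right hand side of \eqref{eq:construction} depends only on the class $\az \in \Pi(\Sigma,\V)$, so if $\alpha_0, \alpha_1$ are based homotopic loops at the same vertex $\vz \in \V$ then $\varphi_{\alpha_0} = \varphi_{\alpha_1}$ trivially. Hence only the case of two loops $\alpha_0, \alpha_1$ based at possibly distinct vertices $\vz_0, \vz_1 \in \V$, freely homotopic as loops in $\Sigma$, requires argument.

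Step two: a free homotopy is a continuous map $H \colon S^1 \times I \to \Sigma$ with $H(\cdot,0) = \alpha_0$ and $H(\cdot,1) = \alpha_1$. Define a path $\beta \colon I \to \Sigma$ by $\beta(t) = H(1,t) = H(0,t)$, which has endpoints $\beta(0) = \vz_0$ and $\beta(1) = \vz_1$ in $\V$, so $[\beta] \in \Pi(\Sigma,\V)$. The key claim is that the based loop $\alpha_1$ at $\vz_1$ is based homotopic to the concatenation $\beta * \alpha_0 * \beta^{-1}$, i.e.\ as classes in the fundamental groupoid,
\begin{equation*}
 \az_1 = [\beta] \circ \az_0 \circ [\beta]^{-1}.
\end{equation*}
This is the standard fact that free homotopy is conjugation in the fundamental groupoid; the explicit homotopy can be written down directly from $H$ by reparametrization of the square $I \times I$.

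Step three: applying a homomorphism $\kappa \in \M_G(\Sigma,\V)$ to this identity gives
\begin{equation*}
 \kappa_{\az_1} = \kappa_\beta \, \kappa_{\az_0} \, \kappa_\beta^{-1}.
\end{equation*}
By conjugation invariance of $\varphi$, we conclude $\varphi(\kappa_{\az_1}) = \varphi(\kappa_{\az_0})$, i.e.\ $\varphi_{\alpha_1}(\kappa) = \varphi_{\alpha_0}(\kappa)$. Since this holds for every $\kappa$, the functions agree.

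The main obstacle is step two, the identification of free homotopy with groupoid conjugation; but this is classical and amounts to cutting the cylinder $S^1 \times I$ along $\{1\} \times I$ to obtain a square, whose boundary then reads $\alpha_1 \simeq \beta * \alpha_0 * \beta^{-1}$ rel endpoints. Everything else is formal manipulation with the definition of $\varphi_\alpha$ and the $\on{Ad}$-invariance of $\varphi$.
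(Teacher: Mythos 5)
Your proposal is correct and follows essentially the same route as the paper: the paper's (terser) proof also reduces the statement to the fact that freely homotopic loops based at vertices have conjugate classes in $\Pi(\Sigma,\V)$, so their holonomies differ by conjugation and $\Ad$-invariance of $\varphi$ finishes the argument. Your step two merely spells out the classical cylinder-cutting argument that the paper leaves implicit.
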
 
\begin{proof} 
 Replacing $\az$ with $\az'=\bz \az \bz^{-1}$ for some 
$\bz\in \Pi(\Sigma,\V)$ with $\sz(\bz)=\vz,\ \tz(\bz)=\vz'$, the holonomy changes by conjugation, 
hence $\varphi\circ \on{ev}_\az=\varphi\circ \on{ev}_{\az'}$. \end{proof}

Hence, the functions \eqref{eq:f} are defined for arbitrary loops $\alpha$, not necessarily based at some point of $\V$. 
By $G^\V$-equivariance, they descend to functions on  $\M_G(\Sigma)=\M_G(\Sigma,\V)/G^\V$. 

In particular, the functions $\varphi_\alpha$ are also defined for surfaces without boundary. For this case, the  Poisson brackets $\{\varphi_\alpha,\psi_\beta\}$ of  functions of this type 
were calculated in the work of Goldman \cite{gol:inv}, leading to the  \emph{Goldman bracket} on free homotopy classes of loops. See also 
Massuyeau-Turaev \cite{mas:qua} for the case with boundary. 

We shall compute the Hamiltonian vector field of \eqref{eq:f} in simple cases. For every conjugation invariant function $\varphi$, 
let $\dot\varphi\in C^\infty(G,\g)$  be defined by 
\begin{equation}\d\varphi=\theta^L\cdot \dot{\varphi}\end{equation} 
Notice that $\Ad_h \dot{\varphi}(g)=\dot{\varphi}(hgh^{-1})$, hence we also have $\d\varphi=\theta^R\cdot \dot{\varphi}$. 
Using this notation, we have $\d \varphi_\alpha=\on{ev}_\az^*(\theta^L\cdot \dot{\varphi})$.

\begin{example}[Moduli space of cylinder] \label{ex:cylinder}
	Consider the example $(\Sigma,\V)=(\Sigma_0^2,\{\vz_1,\vz_2\})$, 
from Section \ref{subsec:cylinderexample}. 
Using the notation given there, take $\alpha$ to be the oriented boundary loop based at $\vz_1$; thus $\on{ev}_\az=\Phi_1$.  
We claim that the Hamiltonian vector field $X_f$ for $f=\varphi_\alpha$  
is given by
\[ \iota(X_f)a^*\theta^L=0,\ \ \iota(X_f)c^*\theta^L=\dot{\varphi}(a),\]
with corresponding flow 
\[ (a,c)\mapsto (a,c\exp(-t\dot{\varphi}(a))).\] 
	%
To see this, note first that $X_f\sim_\Phi 0$ since the flow preserves both momentum map components $\Phi_1(a,c)=a,\ \Phi_2(a,c)=ca^{-1}c$.
	Also,  using the explicit  formula for $\omega$ (see Proposition \ref{prop:cylinder}), 
	\begin{align*}
	\iota(X_f)\omega&=-\hh a^*(\theta^L+\theta^R)\cdot \dot{\varphi}(a) -\hh \dot{\varphi}(a) \Ad_a c^*\theta^L
	+\hh c^*\theta^L\cdot \Ad_a \dot{\varphi}(a)\\&= -a^*\theta^L\cdot \dot{\varphi}(a)\\&=-\d \varphi_\alpha
	\end{align*}
	as desired. In terms of the  $G^\V=G\times G$-action  $(h_1,h_2).(a,c)=(\Ad_{h_1}a,h_2 c h_1^{-1})$, the flow of $X_f$ is given by 
	the action of $(\exp(t\dot{\varphi}(a)),e)$. 
\end{example}
\smallskip
Generalizing this example, we have: 
\begin{tcolorbox}
\begin{proposition}\label{prop:bdryaction}
Suppose $(\Sigma,\V)$ satisfies (A1),(A2),(A3), and that $\alpha\colon [0,1]\to \p\Sigma$ is a boundary loop
based at $\vz\in \V$
with class  $\az\in \Pi(\Sigma,\V)$. Suppose also that the 
component of $\p\Sigma$ containing $\vz$ does not contain any other vertices. 
Given an invariant function $\varphi\in C^\infty(G)$, the flow of the Hamiltonian vector field $X_{\varphi_\alpha}$ is given by 
\[ \kappa_t=\exp(t\dot{\varphi}(\kappa(\az)))\cdot \kappa\]
using the action of the factor $G\subset G^\V$ corresponding to $\vz\in\V$. 
\end{proposition}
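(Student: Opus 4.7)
The plan is to exhibit an explicit vector field satisfying the two defining properties of $X_{\varphi_\alpha}$ and then invoke uniqueness of the Hamiltonian vector field.

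First I would reduce to a single boundary holonomy. Because $\vz$ is the only vertex on its boundary component $C$, this component is divided into a single boundary edge $\ez \in \E$ with $\sz(\ez) = \tz(\ez) = \vz$; after possibly reversing orientation, $\az = \ez$ in $\Pi(\Sigma,\V)$. Consequently $\varphi_\alpha = \varphi \circ \Phi_\ez$ and
$$\d \varphi_\alpha = \Phi_\ez^*(\theta^L \cdot \dot{\varphi}) = \Phi_\ez^*(\theta^R \cdot \dot{\varphi}),$$
the second equality coming from the $\Ad$-invariance of $\varphi$, which also gives $\Ad_g \dot{\varphi}(g) = \dot{\varphi}(g)$. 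In particular, $\dot{\varphi}(g)$ commutes with $g$, so $[\dot{\varphi}(g), g] = 0$.

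Next I would define a candidate Hamiltonian vector field by
$$X|_\kappa = \dot{\varphi}(\Phi_\ez(\kappa))_{\vz, \M}\big|_\kappa,$$
that is, the value at $\kappa$ of the generating vector field of the $G$-action at $\vz$ associated with the element $\dot{\varphi}(\Phi_\ez(\kappa)) \in \g$. This is smooth since $\dot{\varphi}\circ \Phi_\ez$ is a smooth $\g$-valued function on $\M_G(\Sigma,\V)$.

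Then I would verify the two characterizing properties. The action of $G$ at $\vz$ fixes $\Phi_{\ez'}$ for every edge $\ez' \neq \ez$, since neither endpoint of such an edge is $\vz$; on $\Phi_\ez$ it acts by conjugation. Hence the infinitesimal action of $X$ on $\Phi_\ez$ is $[\dot{\varphi}(\Phi_\ez), \Phi_\ez] = 0$, giving $X \sim_\Phi 0$. For the contraction with $\omega$, property (b) of Theorem \ref{th:2form} applied to $\xi \in \g^\V$ supported only at $\vz$ with value $\dot{\varphi}(\Phi_\ez(\kappa))$ retains only the single term indexed by $\ez$, yielding
$$\iota(X)\omega = -\tfrac12\, \Phi_\ez^*(\theta^R + \theta^L) \cdot \dot{\varphi}(\Phi_\ez) = -\Phi_\ez^*(\theta^L \cdot \dot{\varphi}) = -\d \varphi_\alpha,$$
where the middle equality again uses $\theta^R \cdot \dot{\varphi} = \theta^L \cdot \dot{\varphi}$.

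By uniqueness of the Hamiltonian vector field of an invariant function (the theorem opening this section), $X = X_{\varphi_\alpha}$. Since $X \sim_\Phi 0$, the function $\Phi_\ez$ is constant along each integral curve, so the parameter $\dot{\varphi}(\Phi_\ez(\kappa))$ is constant in $t$; the flow therefore integrates to the $G$-action at $\vz$ by $\exp(t\, \dot{\varphi}(\kappa(\az)))$, as claimed. The only non-routine point is the identity $\theta^L \cdot \dot{\varphi} = \theta^R \cdot \dot{\varphi}$, which is precisely what $\Ad$-invariance of $\varphi$ delivers; everything else is bookkeeping.
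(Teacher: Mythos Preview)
Your proof is correct and follows essentially the same approach as the paper's: define the candidate vector field pointwise as the generating vector field for the $G$-action at $\vz$ with parameter $\dot{\varphi}(\kappa(\az))$, verify $X\sim_\Phi 0$ using that conjugation by $\exp(t\dot{\varphi}(\kappa(\az)))$ fixes $\kappa(\az)$, compute $\iota(X)\omega$ via property~(b), and invoke uniqueness. The paper is terser (it works with $\on{ev}_\az$ directly rather than identifying $\az$ with the edge $\ez$, and leaves the flow integration implicit), but the substance is identical.
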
	
\end{tcolorbox}
Letting $\xi=\dot{\varphi}(\kappa(\az))$ (for fixed $\kappa\in \M_G(\Sigma,\V)$), the proposition says that for all 
$\bz\in \Pi(\Sigma,\V)$, 
 \[ \kappa_t(\bz)=\begin{cases}
 \kappa(\bz)&  \sz(\bz)\neq \vz\neq \tz(\bz),\\
 \kappa(\bz)\exp(-\t\xi)& \sz(\bz)=\vz\neq \tz(\bz),\\
 \exp(t\xi) \kappa(\bz)& \tz(\bz)=\vz\neq \sz(\bz),\\
 \exp(t\xi) \kappa(\bz)\exp(-\t\xi)& \sz(\bz)=\tz(\bz)=\vz.
 \end{cases}
\]
\begin{proof}
	The curve $\kappa_t=\exp(t\xi)\cdot \kappa$ represents the tangent vector
	$\xi_{\M_G(\Sigma,\V)}|_\kappa$. Since $\kappa_t(\ez)=\kappa(\ez)$ for every boundary edge, 
	we see that $\xi_{\M_G(\Sigma,\V)}|_\kappa\in \ker(T\Phi|_\kappa)$. 
	The contractions with 
	$\omega$ are obtained from the moment map property \eqref{it:b}: 
	\[ \iota(\xi_{\M_G(\Sigma,\V)})\omega|_\kappa=-\hh \on{ev}_\az^*(\theta^L+\theta^R)|_\kappa\cdot \xi
	=-\hh \on{ev}_\az^*(\theta^L\cdot \dot{\varphi}+\theta^R\cdot \dot{\varphi})|_\kappa=
	-\d f|_\kappa.\]
	This shows that $\xi_{\M_G(\Sigma,\V)}|_\kappa=
	X_{\varphi_\alpha}|_\kappa$.
\end{proof}

We use this to prove the following result, which is a version of a theorem of Goldman \cite[Section 4]{gol:inv} (for surfaces without boundary). It involves the \emph{local intersection number} 
\[ I_p(\alpha,\beta)\in \{+1,-1\}\] for transverse intersections of paths $\alpha,\beta$ at a point $p\in \Sigma$, defined by the following pictures: 
\begin{center}
	\includegraphics[width=0.45\textwidth]{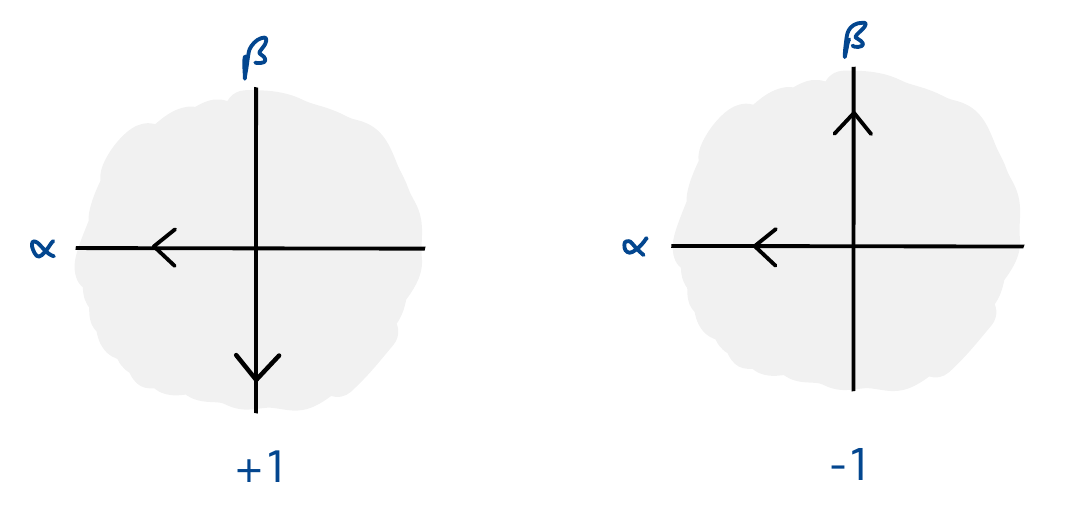}
\end{center}

\begin{tcolorbox}
\begin{theorem}[Goldman flows] \label{th:goldman}
Let $\Sigma$ be a connected surface with non-empty boundary and a 
non-empty collection of vertices $\V\subset \p\Sigma$. Suppose 
\[ \alpha\colon S^1\to \on{int}(\Sigma)\] 
is a {{\bf simple}} loop in its interior, and $\varphi\colon G\to \R$ is an invariant function. Then the 
flow $\kappa\mapsto \kappa_t$ of the  Hamiltonian vector field of $f=\varphi_\alpha$ is described on elements $\bz\in \Pi(\Sigma,\V)$ as follows. 

Represent $\bz$ by a path $\beta\colon [0,1]\to \Sigma$, having transverse intersection with $\alpha$,  with intersection points 
$p_i=\beta(t_i)$ for $t_1<\ldots<t_l$. Then 
\[ \kappa_t(\bz)=\prod_{i=1}^l \exp\big(t\, \epsilon_i\dot{\phi}(\kappa(\az_i))\big)
\kappa(\bz).\]
Here $\epsilon_i=I_{p_i}(\alpha,\beta)=\pm 1$ is the local intersection number at $p_i$, and 
$\az_i\in \Pi(\Sigma,\V)$ is the class of a loop based at $\tz(\bz)$, given by the negatively oriented segment of $\beta$ from 
$\tz(\bz)$ to the intersection point $p_i$,  followed by a positively oriented loop 
 around $\alpha$, followed by the positively oriented segment of $\beta$ from $p_i$ to 
$\tz(\bz)$. 
	\end{theorem}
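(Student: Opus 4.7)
The plan is to cut $\Sigma$ along the simple loop $\alpha$ and reduce to Proposition~\ref{prop:bdryaction}. Pick an auxiliary point $\vz_0\in\alpha$ disjoint from the intersection points, and let $\az$ denote the class of $\alpha$ based at $\vz_0$ in $\Pi(\Sigma,\V\cup\{\vz_0\})$. Cutting produces a surface $\Sigma'$ with two new boundary circles $C_\pm$, each carrying only the single vertex $\vz_\pm$ over $\vz_0$. By the gluing-equals-reduction principle of Section~\ref{subsec:gluingreduction},
\[
\M_G(\Sigma,\V)=\M_G(\Sigma',\V\cup\{\vz_+,\vz_-\})\qu G,
\]
and the function $f=\varphi_\alpha$ pulls back to $\varphi\circ\Phi_{C_+}$ along this reduction. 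Since $\vz_+$ is the only vertex on $C_+$, Proposition~\ref{prop:bdryaction} applies and identifies the Hamiltonian flow of the pullback with the left action of $\exp(t\xi)$ at $\vz_+$, where $\xi=\dot\varphi(\kappa(\az))$. Because the Hamiltonian is $G$-invariant, this flow descends to the sought flow of $f$ on $\M_G(\Sigma,\V)$.

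To evaluate the descended flow on $\bz\in\Pi(\Sigma,\V)$ represented by $\beta$, first homotope $\beta$ rel endpoints so that every crossing with $\alpha$ takes place at $\vz_0$: near each $p_i$, replace the direct crossing by a detour that runs along one side of $\alpha$ from $p_i$ to $\vz_0$, crosses there, and returns along the opposite side back to $p_i$. Taking the short path on each side leaves $\bz$ unchanged. The homotoped path then decomposes as a concatenation $\bz=\tilde\bz_l^{\mathrm{mod}}\ast\cdots\ast\tilde\bz_0^{\mathrm{mod}}$ in $\Pi(\Sigma,\V\cup\{\vz_0\})$, each factor avoiding $\alpha$ in its interior and hence lifting uniquely to an element of $\Pi(\Sigma',\V\cup\{\vz_\pm\})$. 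Under the flow on the cut moduli space, a lifted factor is right-multiplied by $\exp(-t\xi)$ if its source is $\vz_+$ and left-multiplied by $\exp(t\xi)$ if its target is $\vz_+$; at the $i$-th crossing this inserts exactly one factor of the form $\exp(t\epsilon_i\xi)$ between $\kappa(\tilde\bz_i^{\mathrm{mod}})$ and $\kappa(\tilde\bz_{i-1}^{\mathrm{mod}})$ in the product representing $\kappa_t(\bz)$, with sign $\epsilon_i$ reflecting whether the crossing is of type $(-\!\to\!+)$ or $(+\!\to\!-)$.

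To finish, push the $i$-th insertion past $\kappa(\tilde\bz_j^{\mathrm{mod}})$ for $j>i$ using the identity $X\exp(Y)X^{-1}=\exp(\Ad_X Y)$. The product of those factors, together with the detour piece of $\alpha$ from $p_i$ to $\vz_0$, gives exactly the holonomy of $\beta$ from $p_i$ to $\tz(\bz)$; equivariance $\Ad_h\dot\varphi(g)=\dot\varphi(hgh^{-1})$ then turns the conjugated $\Ad\,\xi$ into $\dot\varphi(\kappa(\az_i))$, where $\az_i$ is precisely the loop described in the theorem (reverse $\beta$-segment to $p_i$, positive loop around $\alpha$ at $p_i$, forward $\beta$-segment back to $\tz(\bz)$). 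Collecting these left factors produces the claimed product formula. The main obstacle is the sign-and-orientation bookkeeping: one must align the convention defining $\epsilon_i=I_{p_i}(\alpha,\beta)$ with the choice of $\pm$-side of $\alpha$, and fix the detours and orientation of $\az$ consistently, so that the conjugation in the final step yields literally $\az_i$ rather than a modification of it by a power of $\az$. Once these conventions are pinned down compatibly with the paper's figure for $\epsilon$, the remaining computation is mechanical.
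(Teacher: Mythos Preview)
Your proposal is correct and follows essentially the same route as the paper: cut along the simple loop, invoke Proposition~\ref{prop:bdryaction} on the cut surface (where $\alpha$ has become a boundary loop with a single vertex), decompose $\bz$ into segments whose lifts have endpoints at $\vz_\pm$, read off the insertions of $\exp(t\epsilon_i\xi)$ from the boundary-action flow, and finally conjugate these insertions to the left using $\Ad_h\dot\varphi(g)=\dot\varphi(hgh^{-1})$ to obtain the loops $\az_i$. Your ``homotope $\beta$ so every crossing occurs at $\vz_0$'' is exactly the paper's ``slide the segment endpoints along $\alpha$ to $\vz$'', and your acknowledgment that the remaining work is orientation bookkeeping matches what the paper then carries out in detail.
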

\end{tcolorbox}
\begin{proof}
The range of $\alpha$ is an oriented  circle $C\subset\on{int}(\Sigma)$. The idea of proof is to cut $\Sigma$ along $C$, 
as in Section \ref{subsec:gluingreduction}, thus turning $\alpha$ into a boundary loop. Proposition \ref{prop:bdryaction} describes the Hamiltonian flow on the moduli space of the cut surface; our task is to deduce the resulting Hamiltonian flow 
on $\M_G(\Sigma,\V)$. 
	
We use the notation from Section \ref{subsec:gluingreduction}. Thus, we introduce the interior vertex $\vz=\alpha(0)$, and let $(\Sigma',\V')$ be the surface obtained by cutting along $C$. It has two new boundary components $C_\pm$, and two 
new vertices $\vz_\pm$ obtained as the pre-images of $\vz\in C$ under the quotient map. 
We choose the labeling in such a way that the boundary orientation on $C_+$ agrees with the orientation induced from $C$. 	The loop $\alpha$ lifts to a boundary loops $\alpha_\pm\colon [0,1]\to \Sigma'$, based at $\vz_\pm$, with image $C_\pm$.

The quotient map 
$\Sigma'\to \Sigma$ gives a morphism of fundamental groupoids
\[ \Pi(\Sigma',\V')\to \Pi(\Sigma,\V\sqcup\{\vz\})\supseteq
\Pi(\Sigma,\V).\]
Given $\bz\in \Pi(\Sigma,\V)$, a representative path $\beta\colon [0,1]\to \Sigma$ as in the statement of the theorem determines elements $\bz_0',\ldots,\bz_l'\in \Pi(\Sigma',\V')$, with images $\bz_0,\ldots,\bz_l\in \Pi(\Sigma,\V)$, such that   
\begin{equation}\label{eq:bproduct} \bz=\bz_0\cdots \bz_l.\end{equation} 

To construct these elements, use the points of intersection $p_i=\beta(t_i)$ to subdivide the path $\beta$ into 
segments. The end points of these segments need not lie in $\V\cup\{\vz\}$,  but this is remedied by sliding the end points along $C$. Thus, for each $i=1,\ldots,l-1$ we let 
\[ \beta_i\colon [0,1]\to \Sigma\]
be a path (for some choice of parametrization) given by the positively oriented segment of  $\alpha$ from $\vz$ to $p_{i+1}$, followed by the segment of $\beta$ from 
$p_{i+1}$ to $p_i$, followed by the negatively oriented segment of $\alpha$ from $p_i$ to $\vz$. For $i=l$, 
we let $\beta_l$ be the path given by the segment of $\beta$ from $\beta(0)$ to $p_1$ followed by the 
negatively oriented segment of $\alpha$ from $p_1$ to $\vz$; the description of $\beta_0$ is similar.  
The $\beta_i$ lift uniquely to paths $\beta_i'$ in $\Sigma'$, and we take $\bz_i$ (resp. $\bz_i'$) the corresponding elements of the fundamental groupoid. 

Notice that for all $i>0$, the initial point of the lifted path $\beta_{i-1}'$ equals $v_\pm$, where the 
sign is given by the  intersection number $\eps_i=I_{p_i}(\alpha,\beta)=\pm 1$. Furthermore, the end point of the subsequent path
$\beta_i'$ is then $v_\mp$. Hence
\[ \sz(\bz_{i-1}')=\vz_\pm,\ \ \tz(\bz_i')=\vz_\mp \ \mbox{ if }\eps_i=\pm 1.\]
Let $\az_\pm\in \Pi(\Sigma',\V')$ be the elements defined by the loops $\alpha_\pm$. 
Recall from Section \ref{subsec:gluingreduction}
that $\M_G(\Sigma,\V\sqcup\{\vz\})$ is identified with the submanifold $Z\subset\M_G(\Sigma',\V')$, consisting 
of homomorphisms $\kappa'\colon \Pi(\Sigma',\V')\to G$ such that 
$\kappa'(\az_+)=\kappa'(\az_-)$. In this case, $\kappa'$ descends to a homomorphism 
$\kappa\colon \Pi(\Sigma,\V\sqcup\{\vz\})\to G$, which then restricts to a homomorphism $\kappa\colon \Pi(\Sigma,\V)\to G$. 

The function $\varphi_{\alpha_+}\colon \M_G(\Sigma',\V')\to \R$ descends to $\varphi_\alpha$. Its 
flow $\kappa'\mapsto \kappa_t'$ satisfies $\kappa_t'(\az_+)=\kappa'(\az)$. Hence, the flow preserves $Z$, and 
descends  to the flow $\kappa\mapsto \kappa_t$ of $X_{\varphi_\alpha}$. On the element $\bz$ considered above, 
we obtain (for $\kappa'\in Z$ lifting $\kappa$)
\[ \kappa_t(\bz)=\kappa_t(\bz_0)\cdots \kappa_t(\bz_l)=\kappa_t'(\bz_0')\cdots \kappa_t'(\bz_l').\]

Proposition \ref{prop:bdryaction} provides a description of the flow $\kappa_t'$: 
letting $\xi=\dot{\phi}(\kappa'(\az_+))$, the element $\kappa'(\bz_i')$ gets multiplied from the left by 
$\exp(t\xi)$ if $\tz(\bz_i')=\vz_+$, and from the right by $\exp(-t\xi)$ if $\sz(\bz_i')=\vz_+$. 
It follows that 
\begin{align*}
\kappa_t(\bz)&=\kappa(\bz_0)\exp( t\eps_1\xi)\kappa(\bz_1)\exp( t\eps_2\xi)\cdots \kappa(\bz_l)\\
&=\prod_{i=1}^l \exp\Big( t\eps_i \Ad_{\kappa(\bz_0)\cdots \kappa(\bz_{i-1})}\xi\Big)
\kappa(\bz_0)\cdots \kappa(\bz_l)\\
&=\prod_{i=1}^l \exp\Big( t\eps_i \dot\varphi(\kappa(\az_i))\Big)\kappa(\bz)
\end{align*}
Here we used the property $\Ad_g \dot{\varphi}(h)=\dot{\varphi}(ghg^{-1})$ and 
\[ \az_i=\bz_0\cdots \bz_{i-1}\az_+\bz_{i-1}^{-1}\cdots \bz_0^{-1},\]
by the description of $\az_i$ given in the statement of the theorem. 
\end{proof}

The description of Hamiltonian flows allows us to compute Poisson brackets between functions of the form $\varphi_\alpha$. We obtain the following result (due to Goldman \cite{gol:inv} in the case without boundary).

\begin{tcolorbox}
	\begin{proposition}
		Suppose $\alpha,\beta\colon S^1\to \Sigma$ are  loops with transverse intersection, and let  
		$\varphi,\psi\in C^\infty(G)$ be invariant functions. We assume that $\alpha$ is simple. 
		Then the Poisson
	    bracket is given by 
	\[  \{\varphi_\alpha,\psi_\beta\}|_\kappa=\sum_i \epsilon_i\ \dot{\varphi}(\kappa(\az_i))\cdot \dot{\psi}(\kappa(\bz_i)).\]
Here, the  sum is over intersection points $p_i=\alpha(t_i)$, the signs $\epsilon_i=I_{p_i}(\alpha,\beta)=\pm 1$ are the local intersection numbers,  
and $\az_i$ (resp. $\bz_i$) are the classes of loops, given by a choice of 
path $\gamma_i$ from any point of  $\V$ to $p_i$, followed by the oriented loop $\alpha$ (resp. $\beta$), followed by the 
opposite path $\gamma_i^-$. 
	\end{proposition}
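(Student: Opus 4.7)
The plan is to reduce the computation of $\{\varphi_\alpha,\psi_\beta\}=X_{\varphi_\alpha}(\psi_\beta)$ to the explicit Goldman flow formula from Theorem \ref{th:goldman}, applied to a carefully chosen representative of $\beta$ in the fundamental groupoid $\Pi(\Sigma,\V)$.

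First, I pick a vertex $\vz\in\V$ and a smooth path $\gamma$ from $\vz$ to $\beta(0)$ which is disjoint from the image of $\alpha$ and meets $\beta$ only at its endpoint; in a surface this transversality is generic. Let $\wt\beta=\gamma^-*\beta*\gamma$ and $\bz=[\wt\beta]\in\Pi(\Sigma,\V)$. Because $\psi$ is conjugation invariant and $\wt\beta$ is freely homotopic to $\beta$, we have $\psi_\beta=\psi\circ\on{ev}_{\bz}$. The intersections of $\wt\beta$ with $\alpha$ are precisely the points $p_i=\beta(t_i)$, with the same local intersection numbers $\eps_i$.

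The Hamiltonian flow $\kappa_t$ of $\varphi_\alpha$ is described by Theorem \ref{th:goldman}: setting $\az_i'$ to be the loop constructed from segments of $\wt\beta$ and the loop around $\alpha$ at $p_i$ as in the theorem statement,
\[
\kappa_t(\bz)=\prod_{i=1}^l\exp\!\bigl(t\,\eps_i\,\dot\varphi(\kappa(\az_i'))\bigr)\cdot \kappa(\bz).
\]
Differentiating at $t=0$ and using $\d\psi=\theta^R\cdot\dot\psi$ (so that the derivative of $\psi$ at $g$ along a tangent vector $\xi\cdot g$ is $\xi\cdot\dot\psi(g)$) yields
\[
\{\varphi_\alpha,\psi_\beta\}|_\kappa=\frac{\d}{\d t}\Big|_{t=0}\psi\!\bigl(\kappa_t(\bz)\bigr)=\sum_{i=1}^l\eps_i\,\dot\varphi(\kappa(\az_i'))\cdot\dot\psi(\kappa(\bz)).
\]

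The final step is to rewrite this expression in the form appearing in the Proposition. Choose $\gamma_i$ to be the reverse of the segment of $\wt\beta$ from $p_i$ to $\vz$; with this choice the Proposition's loop $\az_i=\gamma_i^-*\alpha_{p_i}*\gamma_i$ coincides with $\az_i'$, so $\kappa(\az_i')=\kappa(\az_i)$. A short direct manipulation, using $\wt\beta=\gamma^-*\beta*\gamma$ and splitting at $p_i$, shows that the loop $\bz_i=\gamma_i^-*\beta_{p_i}*\gamma_i$ satisfies $\kappa(\bz_i)=\kappa(\bz)$, because it is obtained from $\wt\beta$ by a cyclic rearrangement which is absorbed into the same conjugation. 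Substituting yields the claimed formula. Since the pairing is $\Ad$-invariant while $\dot\varphi,\dot\psi$ are $\Ad$-equivariant, the right-hand side is independent of the particular path $\gamma_i$, so the formula holds for any choice.

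The main obstacle is just careful bookkeeping of path-composition conventions (the paper uses $\gamma_1*\gamma_2=$ first $\gamma_2$ then $\gamma_1$) and of which basepoint each loop class is taken at, in order to see that the explicit $\az_i',\kappa(\bz)$ produced by the theorem match the intrinsic $\az_i,\bz_i$ in the Proposition modulo conjugation absorbed by the invariance of the pairing.
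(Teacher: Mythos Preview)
Your proof is correct and follows essentially the same route as the paper: both reduce the bracket to the explicit Goldman flow of Theorem~\ref{th:goldman} applied to a representative of $\beta$ based at a vertex, then differentiate at $t=0$. The only cosmetic difference is that the paper deforms $\beta$ so that $\beta(0)\in\V$, whereas you conjugate by an auxiliary path $\gamma$; with your choice of $\gamma_i$ one in fact gets $\bz_i=\bz$ on the nose (not merely up to conjugation), just as in the paper.
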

\end{tcolorbox}
Replacing $\gamma_i$ with a different choice $\ti\gamma_i$ changes 
both $\dot{\varphi}(\kappa(\az_i)),\ \dot{\psi}(\kappa(\bz_i))$ by 
 $\Ad_h$ for $h=\kappa(\gamma_i^{-}*\ti\gamma_i) \in G$; hence $\dot{\varphi}(\kappa(\az_i))\cdot \dot{\psi}(\kappa(\bz_i))$ 
does not depend on the choice.

\begin{proof}
By deforming the path $\beta$ we may assume, with no loss of generality, $\beta(0)=\vz\in \V$. Taking $\gamma_i$ to be the segment of $\beta$ 
from $\vz$ to $p_i$, we then have $\bz_i=\bz$, and $\az_i$ as described in Theorem  \ref{th:goldman}. 
Let $\kappa\mapsto \kappa_t$ be the flow of $X_{\varphi_\alpha}$. Using Theorem \ref{th:goldman}, we calculate 
\begin{align*} \{\varphi_\alpha,\psi_\beta\}|_\kappa&=\L_{X_{\varphi_\alpha}}\psi_\beta\\
&=\f{d}{d t}|_{t=0}\psi_\beta(\kappa_t)\\
&=\f{d}{d t}|_{t=0}\psi(\kappa_t(\bz)))\\
&=\f{d}{d t}|_{t=0}\psi\Big(\prod_{i=1}^l \exp\big(t\, \epsilon_i\dot{\phi}(\kappa(\az_i))\big)
\kappa(\bz)\Big)\\
&=\sum_i \f{d}{d t}|_{t=0} \psi\Big(\exp\big(t\epsilon_i\ \dot{\varphi}(\kappa(\az_i))\kappa(\bz)\Big)\\
&=\sum_i \epsilon_i\ \dot{\varphi}(\kappa(\az_i))\cdot \dot{\psi}(\kappa(\bz)).
\end{align*}
\end{proof}
\begin{remark}
In this discussion, we assumed that $\alpha$ is simple so that we could directly apply theorem \ref{th:goldman}. The statement holds true, however, without this assumption. 
\end{remark}

\begin{remark}
For surfaces without boundary, 	Goldman \cite{gol:inv} showed that the Poisson bracket of functions of the form $\varphi_\alpha$ may be obtained from a universal Lie algebra structure on the free abelian group spanned by (free) homotopy classes of loops in $\Sigma$. 
\end{remark}

\section{The quasi-symplectic groupoid}\label{sec:quasisymplecticgroupoid}\label{sec:quasi}
As described in Example \ref{ex:cylindergluing}, the moduli space of a cylinder, 
\[ \M_G(\Sigma_0^2,\{\vz_1,\vz_2\})\]
serves as the identity under the gluing operation. We shall see that this moduli space carries the structure of 
a \emph{quasi-symplectic groupoid}, with the moduli spaces 
for other surfaces (with one base point on each boundary) as \emph{Hamiltonian spaces}.

\subsection{Quasi-symplectic groupoids}
For background material on Lie groupoids, we refer to \cite{cra:lec,mac:gen}. In a nutshell, a Lie groupoid $\S\rra M$
consists of a manifold $\S$ of arrows, a submanifold $M$ of units, surjective submersions 
$\sz.\tz\colon \S\to M$ and a multiplication map 
\[ \Mult_\S\colon \S^{(2)}\to \S,\ (s_1,s_2)\mapsto s_1s_2\]
defined on the set $\S^{(2)}=\{(s_1,s_2)|\ \sz(s_1)=\tz(s_2)\}$ of \emph{composable arrows}.  The axioms for a 
Lie groupoid 
say that $\S$ is a category, with $M$ as its objects and $\S$ as its morphisms, such that 
$\Mult_\S$ is the composition of morphisms and such that every morphism is invertible. A \emph{(left) action} 
of a Lie groupoid $\S\rra M$ on a manifold $Q$ along a map $\Phi\colon Q\to M$ (sometimes called a momentum map) 
is given by an action map 
\begin{equation}
\label{eq:actionmap}
  \A\colon \S\ _{\sz}\!\!\times_{\Phi} Q\to Q,\ (s,q)\mapsto sq
\end{equation}

(the subscripts indicate the  fiber product), 
with $\Phi(sq)=\tz(s)$, satisfying the action properties $s_1(s_2q)=(s_1s_2)q$, with 
$mq=q$ for units $m\in M\subset \S$. For example, every groupoid has a (unique) action on its space $M$ 
of units along the identity map  $\Phi=\on{id}_M$; the action of $s\in \S$ takes $m=\sz(s)$ to $sm=\tz(s)$. This action on $M$ restricts to an action on orbits $\S m=\tz(\sz^{-1}(m))$.

A 2-form $\omega\in \Omega^2(\S)$ is called \emph{multiplicative} if it satisfies 
\[ \Mult_\S^*\omega=\pr_1^*\omega+\pr_2^*\omega\]
where $\pr_1,\pr_2\colon \S^{(2)}\to \S$ are the projections to the first and second factor. A \emph{symplectic groupoid} 
is a Lie groupoid with a multiplicative symplectic 2-form $\omega\in \Omega^2(\S)$. A \emph{Hamiltonian space} for a symplectic groupoid $(\S,\omega_\S)$ is a symplectic manifold $(Q,\omega_Q)$ with a left-action of $\S$ such that 
the action map satisfies $\A^*\omega_Q=\pr_1^*\omega+\pr_2^*\omega_Q$. As it turns out, the orbits of  
the action on units $M\subset \S$ acquire unique symplectic 2-forms for which they are Hamiltonian spaces.  

\begin{example}
	The basic example of a symplectic groupoid is the cotangent bundle of a Lie group, 
	\[ \S=T^*G\rra \g^*\] 
	with its standard symplectic form.
Using left trivialization  $T^*G\cong G\times \g^*$, with elements denoted $(c,\mu)$, the symplectic form is 
\begin{equation}\label{eq:tstarg}
 \omega=\d \l c^*\theta^L,\mu\r=-\l c^*\theta^L, \d\mu\r-\hh \l [c^*\theta^L,c^*\theta^L],\mu\r.\end{equation}
The groupoid structure is that of an action groupoid for the co-adjoint action: Its source and target map are
$\sz(c,\mu)=\mu,\ \tz(c,\mu)=(\Ad_{c^{-1}})^*\mu$, and the groupoid multiplication reads as
\[ (c_1,\mu_1)(c_2,\mu_2)=(c_1c_2,\mu_2).\]
	As shown by Mikami-Weinstein \cite{mik:mom}, a Hamiltonian space for this symplectic groupoid is exactly the same as a Hamiltonian $G$-space in the sense of symplectic geometry, with $\Phi$  as its momentum map. The symplectic structure 
	on the $G$-orbits $\O\subset \g^*\subset T^*G$ is the usual Kirillov-Kostant-Souriau 2-form.
\end{example}

Symplectic groupoids `integrate' Poisson manifolds. Their Dirac-geometric generalizations are the quasi-symplectic groupoids.  

 \begin{tcolorbox}
 	\begin{definition}\cite{bur:int,xu:mom} \label{def:quasisymplecticgroupoid}
 		A \emph{quasi-symplectic groupoid} is a Lie groupoid $\S\rra M$ with $\dim\S=2\dim M$, equipped with a 
 		2-form $\omega\in \Omega^2(\S)$ and a 3-form $\eta\in \Omega^3(M)$ such that
 		\begin{enumerate}
 			\item $\omega$ is multiplicative,
 			\item $\d\omega=\tz^*\eta-\sz^*\eta,\ \ \d\eta=0$,
 			\item $\ker(\omega)\cap \ker(T\sz)\cap \ker(T\tz)=0$. 
 		\end{enumerate}
 		A \emph{Hamiltonian space} for a quasi-symplectic groupoid is a manifold $Q$ with a 2-form $\omega_Q$, together with an $\S$-action along $\Phi\colon Q\to M$, such that 
 		\begin{enumerate}
 			\item $\A^*\omega_Q=\pr_\S^*\omega+\pr_Q^*\omega_Q$, 
 			\item $\d\omega_Q=-\Phi^*\eta$,
 			\item $\ker(\omega_Q)\cap \ker(T\Phi)=0$. 
 		\end{enumerate}
 	\end{definition}
 \end{tcolorbox}
Here $\A$ is the action map \eqref{eq:actionmap}, and $\pr_\S,\pr_Q$ are the projections from the fiber product to the two factors. 

Similar to the symplectic setting, the orbits $\O=\tz(\sz^{-1}(a))\subset M$ of a quasi-symplectic groupoid 
 $\S$  (acting on its units $M$) are examples of Hamiltonian $\S$-spaces, with momentum map the inclusion. The 2-form $\omega_\O$ is obtained by reduction: the pullback of $\omega$ to $\sz^{-1}(a)$ descends to $\omega_\O$; the result does not depend on the 
choice of $a$.

\begin{remark}
	In \cite{bur:int}, the terminology \emph{twisted pre-symplectic groupoid} was used. We prefer \emph{quasi-symplectic}, since $\omega$ does satisfy a nondegeneracy condition (property (c) above). 
	 There is also a Dirac-geometric 
	formulation of quasi-symplectic groupoids without the use of 2-forms, see e.g. \cite{lib:int}. 
\end{remark}

\subsection{The moduli space of a cylinder}\phantom{.}
Let $C$ be a compact, oriented 1-manifold (in other words, a finite collection of 
circles), with a finite subset $\V\subset C$ meeting each component of $C$. 
Thus $\M_G(C,\V)\cong G^\E$. The moduli space of the cylinder $C\times I$, with vertices $\V\times \p I$\
is a Lie groupoid 
\begin{equation} \S=\M_G(C\times I,\ \V\times \p I)\rra \M_G(C,\V)\end{equation}
(see Exercise \ref{ex:2.0}), with source and target map induced by the two boundary inclusions of 
$(C,\V)$ into $ (C\times I,\V\times \p I)$. The groupoid multiplication is pictorially described by the gluing of 
cylinders. 

The groupoid may be identified with the action groupoid: 
\[ \S\cong  G^\V\times \M_G(C,\V),\]
where the isomorphism is given by the source map together with the maps 
$\S\to G$, for $\vz\in \V$, given by the holonomy of the path from $\vz\times \{1\}$ to $\vz\times \{0\}$ along $\vz\times I$.

\begin{tcolorbox}
\begin{theorem}[Quasi-symplectic groupoids from moduli spaces]\label{th:quasisymplecticgroupoid}\phantom{.}
\begin{enumerate}
	\item The moduli space of $(C\times I,\ \V\times \p I)$, with the 2-form given by Theorem \ref{th:2form}, is a
	quasi-symplectic groupoid 	\[ \M_G(C\times I,\ \V\times \p I)\rra \M_G(C,\V).\] 
	\item If $(\Sigma,\V)$ satisfies (A1),(A2),(A3), then $\M_G(\Sigma,\V)$ is a Hamiltonian space for 
	the quasi-symplectic groupoid defined by $(\p\Sigma,\V)$. \
\end{enumerate}
\end{theorem}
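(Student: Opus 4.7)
For part (1), the groupoid structure on $\S \rra M$ is made explicit as follows: the source and target maps $\sz,\tz$ are the boundary holonomies of $(C\times I,\V\times\p I)$ along the two copies of $C$, and the multiplication $\Mult_\S$ stacks two cylinders along a common boundary circle. Concretely, the gluing principle of Section \ref{subsec:gluingreduction} identifies the fiber product $\S \times_M \S$ with $\M_G(C\times[0,2],\V\times\{0,2\})$, and any rescaling diffeomorphism $C\times[0,2]\cong C\times I$ supplies the canonical isomorphism to $\S$. Multiplicativity follows at once from Proposition \ref{prop:2form}: the 2-form on $\M_G(C\times[0,2],\V\times\{0,2\})$ is the pullback of the sum of the 2-forms on the two individual cylinders (interior vertices at the gluing circle contribute nothing). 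Taking $\eta_M = \sum_{\ez\in \E_C}\on{ev}_\ez^*\eta$, which is closed because $\eta$ is, the identity $\d\omega = \tz^*\eta_M - \sz^*\eta_M$ follows from Theorem \ref{th:2form}(a) by splitting the sum $-\sum_{\ez}\Phi_\ez^*\eta$ over the two boundary components of the cylinder and using $\on{Inv}^*\eta = -\eta$ (Proposition \ref{prop:mult}) to absorb the opposite induced orientation on one side. Finally, since $\Phi = (\sz,\tz)$, the minimal degeneracy condition reads $\ker(\omega)\cap\ker(T\sz)\cap\ker(T\tz) = \ker(\omega)\cap\ker(T\Phi) = 0$, which is literally Theorem \ref{th:2form}(c).

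For part (2), the action $\A\colon \S\ _{\sz}\!\!\times_{\Phi} Q\to Q$ corresponds geometrically to attaching a cylinder $C\times I$ (with $C = \p\Sigma$) to the boundary of $\Sigma$: the gluing principle identifies the fiber product with $\M_G(\Sigma\cup_C (C\times I),\V\cup (\V\times\{1\}))$, and a diffeomorphism $\Sigma\cup_C (C\times I)\cong \Sigma$ supplies the action map. Conditions (a), (b), (c) of Definition \ref{def:quasisymplecticgroupoid} for a Hamiltonian space then reduce to Proposition \ref{prop:2form} (for the multiplicativity of $\omega_Q$ under the action) and Theorem \ref{th:2form}(a) and (c) (for the 3-form compatibility and minimal degeneracy), in exactly the same way as in part (1). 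Associativity of the action and compatibility with the groupoid unit follow from the independence of $\omega$ from the choice of gluing pattern.

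The main obstacle I anticipate is the careful bookkeeping of orientations and canonical identifications. The boundary of $C\times I$ inherits two orientations, one matching $C$ and one reversed, so the source/target conventions must be chosen consistently for the signs in the 3-form compatibility to work out. Likewise, the various rescaling diffeomorphisms $C\times[0,n]\to C\times I$ and $\Sigma\cup_C(C\times I)\to \Sigma$ must be selected coherently so that associativity and the action axioms hold on the nose rather than up to homotopy; here the mapping class group invariance from Theorem \ref{th:2form} is what ultimately makes these choices irrelevant. Once these conventions are fixed, the argument is a routine unpacking of the gluing principle combined with the properties of $\omega$ already established.
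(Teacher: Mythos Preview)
Your overall strategy is right and is essentially the paper's: multiplicativity is read off from the gluing picture. The paper does this by choosing explicit holonomy coordinates and computing with \v{S}evera's product $(e,\omega)=(g_1,0)\bullet\cdots\bullet(g_n,0)$ on a concrete gluing diagram, factoring $(e,\Mult_\S^*\omega)$ as $(e,\pr_1^*\omega)\bullet(e,\pr_2^*\omega)$; you instead invoke the structural results (Proposition~\ref{prop:2form}, Section~\ref{subsec:gluingreduction}) as black boxes. Either route works.

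There is, however, a genuine bookkeeping error in your description of $\Mult_\S$. You identify $\S\times_M\S$ with $\M_G(C\times[0,2],\V\times\{0,2\})$ and then say a rescaling $C\times[0,2]\cong C\times I$ supplies the isomorphism to $\S$. That would make $\Mult_\S$ a diffeomorphism, but $\dim\S^{(2)}=2\dim\S-\dim M>\dim\S$. The correct identification is
\[
\S^{(2)}\;\cong\;\M_G\bigl(C\times[0,2],\,\V\times\{0,1,2\}\bigr),
\]
keeping the middle layer $\V\times\{1\}$ as \emph{interior} vertices; the multiplication $\Mult_\S$ is then the projection to $\M_G(C\times[0,2],\V\times\{0,2\})\cong\S$ that forgets those interior vertices (equivalently, the quotient by $G^{\V}$ acting at the gluing circle, cf.\ Example~\ref{ex:quotients}). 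With this fix your argument goes through: the identity $\iota^*\omega'=\pi^*\omega$ from Section~\ref{subsec:gluingreduction} becomes exactly $\pr_1^*\omega+\pr_2^*\omega=\Mult_\S^*\omega$, and this is also how the paper sets things up (``the quotient map $\pi\colon Z\to\S$ serves as the groupoid multiplication''). The same correction applies in part~(2): a diffeomorphism $\Sigma\cup_{\p\Sigma}(\p\Sigma\times I)\cong\Sigma$ alone does not give $\A$, since the domain has $2\#\V$ vertices and the target only $\#\V$; you must compose with the map forgetting the interior vertices at the gluing circle. Your own remark that ``interior vertices at the gluing circle contribute nothing'' is precisely the ingredient that identifies $\Mult_\S$ and $\A$---it belongs in the description of those maps, not only in the comparison of 2-forms afterwards.
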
	
\end{tcolorbox}
\begin{proof}
\begin{enumerate}
	\item The properties $\d\omega=\tz^*\eta-\sz^*\eta$ and $\ker(\omega)\cap\ker(T\sz)\cap\ker(T\tz)=0$ 
	are part of the properties of the 2-form on moduli spaces, see Theorem \ref{th:2form}. We have to show that $\omega$ is multiplicative. Note that the description of the groupoid multiplication is parallel to the reduction operation from 
	\ref{subsec:gluingreduction}, except for the fact that the gluing circles may contain more than one vertex. 

	It suffices to consider the case that $C$ is connected. 
	Enumerate the vertices counter-clockwise: $\V=\{\vz_1,\ldots,\vz_N\}$. We shall identify the right boundary circle $C\times \{1\}$ with $C$ and denote $C\times \{0\}$ by $C'$; similarly we 
	write $\V\times\{1\}=\V$ and $\V\times\{0\}=\V'=\{\vz_1',\ldots,\vz_N'\}$. 
	The groupoid multiplication involves two copies of $\M_G(\Sigma,\V)$. For the first copy we denote by $c$ the holonomy from $\vz_1$ to $\vz_1'$ along $\{\vz_1\}\times I$, and by $a_i$ (resp. $a_i'$) the holonomies 
	of the boundary edge  from $\vz_{i+1}$ to $\vz_i$ (resp. $ \vz'_{i+1}$ to $\vz'_i$). The corresponding quantities 
	for the second copy are denoted $d$ and $b_i,b_i'$ respectively. 
	The subset $Z=\S\,_\sz\!\times_\tz\S$ is characterized by the equations $a_i=b_i'$ for $i=1,\ldots,N$. 
	As explained above, the quotient map $\pi\colon Z\to \S$ serves as the groupoid multiplication. 
	The submanifold $Z$ is the moduli space associated with 
	$\left(C\times \f{I\sqcup I}{\sim},\V\times \f{\p I\sqcup \p I}{\sim}\right)$, where $\sim$ is the 
    equivalence relation defined by the gluing of end points. The gluing diagram 
  	\begin{center}
  	\includegraphics[width=0.25\textwidth]{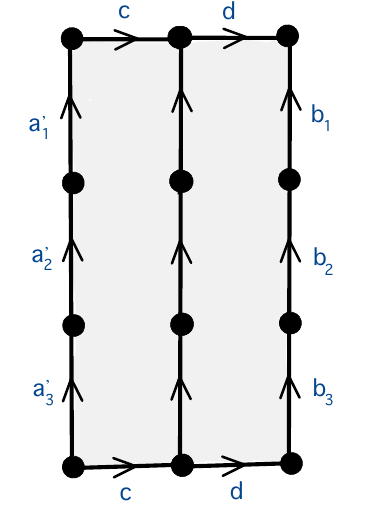}
  \end{center}
    gives 
 \[ (e,\pi^*\omega)=   (c^{-1},0)\bullet (a_N',0)\bullet\cdots (a_1',0)\bullet (c,0)\bullet (d,0)\bullet 
 (b_1^{-1},0)\bullet \cdots (b_N^{-1},0)\bullet (d^{-1},0).\]
 Using $a_i=b_i'$, the right hand side can be written as a product of 
 \begin{align*}
 (c^{-1},0)\bullet (a_N',0)\bullet\cdots (a_1',0)\bullet (c,0)\bullet  (a_1^{-1},0)\bullet \cdots (a_N^{-1},0)&=(e,\pr_1^*\omega)\\
 (b_N',0) \bullet \cdots \bullet (b_1',0)\bullet 
 (d,0)\bullet (b_1^{-1},0)\bullet \cdots (b_N^{-1},0)\bullet (d^{-1},0)&=(e,\pr_2^*\omega)
 \end{align*}
 In terms of gluing diagrams, the two products are depicted as 
 	\begin{center}
 	\includegraphics[width=0.3\textwidth]{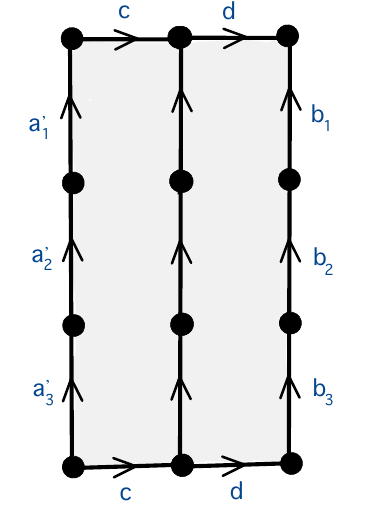}
 \end{center}
 
We hence obtain $(e,\pi^*\omega)= (e,\pr_1^*\omega)\bullet  (e,\pr_2^*\omega)
 =(e,\pr_1^*\omega+\pr_2^*\omega)$ as desired.
\item The argument is analogous to part (a), by choosing gluing a diagram for $(\Sigma,\V)$.  
\qedhere
\end{enumerate}
\end{proof}

\begin{remark}
An important notion in the theory of Lie groupoids is \emph{Morita equivalence}.
A Morita equivalence of Lie groupoids $\S_i\rra Q_i,\ i=1,2$ is given by a manifold $\E$, together with a left-action
of $\S_1$ along $\Phi_1\colon \E\to Q_1$ and a right action of $\S_1$ along $\Phi_2\colon \E\to Q_2$, such that the two actions commute, both actions are principal actions, and $\Phi_1$ is the quotient map for the 
$\S_2$-action while $\Phi_2$ is the quotient map for the $\S_1$-action. For a Morita equivalence of quasi-symplectic 
groupoids \cite{xu:mom}, the manifold $\E$ is equipped with a 2-form $\omega_\E$, in such a way that $\E$ becomes a Hamiltonian space for $\S_1\times \S_2^{\on{op}}$ (where $\on{op}$ indicates the opposite groupoid obtained by reversing arrows, with 2-form $-\omega_2$). Momentum map theories for Morita equivalent  quasi-symplectic groupoids are considered `essentially the same'. For more details and basic facts, see e.g. \cite[Appendix A]{al:coad}.

In the moduli setting, if $C$ is a compact oriented 1-manifold and $\V_1,\V_2\subset C$ are finite subsets meeting each component, then the quasi-symplectic groupoids $\S_1,\S_2$ associated to $(C,\V_1)$ and $(C,\V_2)$ are Morita equivalent, with the 
bimodule 
\[ \S_1\circlearrowright \ca{R} \circlearrowleft \S_2\]
given by $\ca{R}=\M_G(C\times I,(\V_1\times \{0\})\sqcup (\V_2\times \{1\}))$. 
\end{remark}

\subsection{Orbits}
As remarked after Definition \ref{def:quasisymplecticgroupoid}, the orbits of any quasi-symplectic groupoid $\S$ are 
Hamiltonian spaces for $\S$.  In the case of $\S=\M_G(C\times I,\V\times \p I)$, the groupoid is an action groupoid for the $G^\V$-action on $\M_G(C,\V)\cong G^\E$. For a description of the orbits of this action, we may assume that 
$C$ is connected. 

Suppose first that $\V=\{\vz\}$ is a single point. Then $G^\E=G$ with the usual conjugation action of $G^\V=G$, hence the orbits are conjugacy classes $\O=G.a\subset G$. The 2-form on $\O$, making it a quasi-Hamiltonian space 
for the quasi-symplectic groupoid of $(C,\{\vz\})$, may be computed from the formula for the 2-form on 
$\S$ given in Proposition \ref{prop:cylinder}: Pulling back to $\sz^{-1}(a)$ gives the 2-form 
 $\hh \theta^L\cdot \Ad_a \theta^L\in \Omega^2(G)$.
 That is, 
 \[ (\iota_{\sz^{-1}(a)}^*\omega)(\xi^L,\zeta^L)=-\hh (\Ad_a-\Ad_{a^{-1}})\xi\cdot \zeta.\]
It  descends to the 2-form on $\O=G/G_a$, given on generating vector fields by the same expression, 
\[ \omega_\O(\xi_\O,\zeta_\O)|_a 
=-\hh (\Ad_a-\Ad_{a^{-1}})\xi\cdot \zeta;
\]
We may regard $\omega_\O$ as a counterpart of the Kirillov-Kostant-Souriau form on coadjoint orbits $\O\subset \g^*$. It was first described in \cite{gu:gr}, see also \cite[Section 3.1]{al:mom}. Note that $\omega_\O$ is the unique 2-form on the orbit satisfying the momentum map condition \[\iota(\xi_\O)\omega_\O=-\iota_\O^*(\theta^L+\theta^R)\cdot \xi.\] 

In the general case, if $\V$ has $n\ge 1$  elements, the orbits $\O\subset \M_G(C,\V)$ of the groupoid 
consist of $n$-tuples $(a_1,\ldots,a_n)$ whose product $\prod_{i=1}^n a_i$ lies in a fixed conjugacy class. 
One may work out a formula for $\omega_\O$  by pulling back the 2-form on $\M_G(C\times I,\V\times \p I)$ 
to source fibers. 
 The result is, for 
$\xi=(\xi_1,\ldots,\xi_n),\ \zeta=(\zeta_1,\ldots,\zeta_n)\in \g^\V$, and any point $a=(a_1,\ldots,a_n)\in G^\E$,
\[  \omega_\O(\xi_\O,\zeta_\O)|_a 
=-\hh \sum_{i=1}^n (\Ad_{a_i}-\Ad_{a_i^{-1}})\xi_{i+1}\cdot \zeta_i.\]

\section{Dirac geometry}\label{sec:dirac}
Until now, we developed the theory of 2-forms on moduli spaces intrinsically, using gluing diagrams for surfaces. We did, however, postpone arguments having to do with the `minimal degeneracy' property of the 2-form from Theorem \ref{th:2form}, since these 
are better understood within the framework of Dirac geometry. We shall give these proofs now, after developing some foundational material. 

\subsection{Definitions}\label{subsec:definitions}
The notion of a \emph{Dirac structure} on a manifold was introduced by Courant and Weinstein \cite{cou:di,couwein:beyond} as a simultaneous generalization of Poisson structures and closed 2-forms. The basic idea is to describe  bivector fields and 2-forms on a manifold $Q$ in terms of their graphs. Fix a closed 3-form $\eta\in \Omega^3(Q)$, and denote 
\[\T_\eta Q=TQ\oplus T^*Q\]
with the \emph{Courant  bracket} 
(also known as \emph{Dorfman bracket}) 
on its space of sections 
\[ \Cour{X_1+\mu_1,X_2+\mu_2}=[X_1,X_2]+\L_{X_1}\mu_2-\iota_{X_2}\d\mu_1
+\iota_{X_1}\iota_{X_2}\eta\]
for vector fields $X_i$ and 1-forms $\mu_i$. The bracket satisfies a version of the Jacobi identity:  
\[ \Cour{\sigma_1,\Cour{\sigma_2,\sigma_3}}
=\Cour{\Cour{\sigma_1,\sigma_2,\sigma_3}}+\Cour{\sigma_2,\Cour{\sigma_1,\sigma_3}}.\]
It is not skew-symmetric, but its symmetric part is an exact 1-form:
\[ \Cour{\sigma_1,\sigma_2}+\Cour{\sigma_2,\sigma_1}=\d \l\sigma_1,\sigma_2\r.\]
Here  $\l\cdot,\cdot\r$ denotes the symmetric bilinear form  on $\ T_\eta Q$, given by 
\[\l X_1+\mu_1,X_2+\mu_2\r=\iota_{X_1}\mu_2+\iota_{X_2}\mu_1.\] 

A \emph{Dirac structure} on $Q$ is a subbundle $A\subset \T_\eta Q$  (with base $Q$)  such that $A$ is Lagrangian (i.e., $A=A^\perp$), and such that its space of sections is closed under the Courant bracket. For every Dirac structure, the 
Courant-Dorfman bracket restricts to a Lie bracket on $\Gamma(A)$, making $A$ into a Lie algebroid with anchor 
$\a_A\colon A\to TQ$ given by projection to the vector field part. 

For $\eta=0$, one finds that a bivector field $\pi\in \Gamma(\wedge^2 TQ)$ is a Poisson structure on $Q$ if and only if the graph of the map 
$\pi^\sharp\colon T^*Q\to TQ$ is a Dirac structure; similarly a 2-form $\omega\in \Gamma(\wedge^2 T^*Q)$ is closed if and only if the graph of $\omega^\flat\colon TQ\to T^*Q$ is a Dirac structure.

The Dirac structures relevant to quasi-Hamiltonian geometry arise from the $\eta$-twisted Courant bracket on 
$\EE=\T_\eta G$, where $G$ is a Lie group with an invariant metric $\cdot$ on its Lie algebra $\g$, and  $\eta\in \Omega^3(G)$ the Cartan 3-form \eqref{eq:eta}. It admits a trivialization \cite{al:pur}
\[ \EE\cong G\times (\ol{\g}\oplus \g)\] 
by the map $s\colon \ol{\g}\oplus \g\to \Gamma(\EE)$, 
\begin{equation}\label{eq:striv} s(\xi',\xi)=\xi^L-(\xi')^R+\hh (\theta^L\cdot\xi+\theta^R\cdot\xi')\in \Gamma(\T G).\end{equation}
One may verify that $s$ takes the Lie bracket and metric on $\ol{\g}\oplus \g$ (regarded as constant sections) to the Courant bracket and metric 
on $\Gamma(\EE)$. Hence, every Lagrangian Lie subalgebra $\mf{l}\subset (\ol{\g}\oplus \g)$ determines a Dirac structure 
$A=G\times \mf{l}$ inside $\EE$. In particular, the \emph{Cartan-Dirac structure} $A\subset \EE$ \cite{al:pur,bur:di}
is the subbundle corresponding to the diagonal, $\mf{l}= (\g)_\Delta$.

\subsection{Dirac morphisms, Hamiltonian spaces}
Let $Q_i,\ i=1,2$ be manifolds with closed 3-forms $\eta_i$. A \emph{Courant morphism} 
\begin{equation}\label{eq:courantmorphism}
\T_\omega \Phi\colon \T_{\eta_1}Q_1\da \T_{\eta_2}Q_2\end{equation}
is given by a smooth map $\Phi\colon Q_1\to Q_2$ and a 2-form 
$\omega\in \Omega^2(Q_1)$ such that $\eta_1-\Phi^*\eta_2=\d\omega$. (If the 2-form $\omega$ is zero, we use the 
notation $\T\Phi$.)  We  think of the Courant morphism 
as a  \emph{relation}, where $v_1+\mu_1\in \T_{\eta_1}Q_1|_{q_1}$ is related to 
$v_2+\mu_2\in \T_{\eta_2}Q_2|_{q_2}$ if and only if $q_2=\Phi(q_1)$ and 
\[ v_2=T\Phi|_{q_1}(v_1),\ \ \mu_1=(T\Phi|_{q_1} )^*(\mu_2)+\iota_{v_1}\omega.\]
 We shall write 
\[ v_1+\mu_1\sim_{\T_\omega\Phi} v_2+\mu_2\]
for related elements. 
Given another Courant morphism 
$\T_{\omega'}\Phi'\colon  \T_{\eta_2}Q_2\da \T_{\eta_3}Q_3$, the composition of relations is given by 
\[ \T_{\omega'} \Phi'\circ \T_\omega\Phi=\T_{\omega+ 
\Phi^*\omega'}(\Phi'\circ \Phi).\]
Given Dirac structures $A_i\subset \T_{\eta_i} Q_i$, we say that \eqref{eq:courantmorphism} defines a (strong) Dirac morphism, also known as  
\emph{morphism of Manin pairs} \cite{bur:cou},
\begin{equation}\label{eq:diracmorphism}
\T_\omega \Phi\colon (\T_{\eta_1}Q_1,A_1)\da (\T_{\eta_2}Q_2,A_2),\end{equation}
if for all $q_2=\Phi(q_1)$, every element of $(A_2)_{q_2}$ is $\T_\omega \Phi$-related to a \emph{unique} element of 
$(A_1)_{q_1}$. By definition, a Dirac morphism defines a bundle map $\Phi^*A_2\to A_1$, or equivalently a \emph{comorphism of vector bundles} $A_1\da A_2$. In fact, it is a \emph{comorphism of Lie algebroids}: the
pullback map on sections $\Gamma(A_2)\to \Gamma(A_1)$ preserves Lie brackets. 
\medskip

As an important special case, we define 
 
\begin{tcolorbox}
\begin{definition}\cite{bur:cou} \label{def:ham}
	A \emph{Hamiltonian space} for the Dirac structure $A\subset \T_\eta Q$ is a 
	manifold $M$ together with a Dirac morphism
\begin{equation}\label{eq:ham}
\T_\omega\Phi\colon (\T M,TM)\da (\T_\eta Q,A).
\end{equation}
 
	\end{definition}
\end{tcolorbox}

We refer to $\Phi$ as the \emph{momentum map} of the Hamiltonian space. The Lie algebra morphism 
\[ \Gamma(A)\to \Gamma(TM),\ \sigma\mapsto \sigma_M\] 
defines an \emph{action of the Lie algebroid $A$ on $M$} along $\Phi$.

Conversely, given a manifold $M$ with a 2-form $\omega$ and a Lie algebroid action of  $A\subset \T_\eta Q$  along a map $\Phi$, 
the pair $(\Phi,\omega)$ defines a Hamiltonian space  if and only if 
the following three conditions are satisfied: 
\medskip
	\begin{enumerate}
	\item\label{it:d1} $\d\omega=-\Phi^*\eta$
	\item\label{it:d2} $\iota(\sigma_M)\omega=-\Phi^* \alpha_A(\sigma)$ for all $\sigma\in \Gamma(A)$, 
	\item\label{it:d3} $\ker(\omega)\cap \ker(T\Phi)=0$. 
\end{enumerate}
\medskip

Here $\alpha_A(\sigma)\in \Omega^1(Q)$ is the 1-form component of $\sigma\in \Gamma(A)$. 
 By \eqref{it:d2}, the kernel of $\omega$ at $m$ contains all $\sigma|_M|_m$ such that $\alpha_A(\sigma)|_m=0$. 
 In fact, this is the entire kernel:
 \begin{tcolorbox}
 	\begin{proposition}\label{prop:kernel}
 		The condition $\ker(\omega)\cap \ker(T\Phi)=0$ is equivalent to the following explicit description of the kernel,
 		\[ \ker(\omega)|_m=\{\sigma_M|_m\colon \alpha_A(\sigma)|_{\Phi(m)}=0\},\]
 		for all $m\in M$. 
 	\end{proposition}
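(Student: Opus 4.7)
The plan is to split the equivalence into two one-sided implications, once one observes that the inclusion $\supseteq$ in the explicit kernel description is automatic from the momentum map property~\eqref{it:d2}, independently of~\eqref{it:d3}: if $\alpha_A(\sigma)|_{\Phi(m)}=0$, then $\iota(\sigma_M|_m)\omega=-\Phi^*\alpha_A(\sigma)|_m=0$, because the pullback factors through the value of $\alpha_A(\sigma)$ at $\Phi(m)$. Thus the content of the proposition is the equivalence of~\eqref{it:d3} with the reverse inclusion $\subseteq$.

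The direction $\subseteq\Rightarrow$~\eqref{it:d3} is short: if $v\in\ker(\omega)|_m\cap\ker(T\Phi|_m)$, then by hypothesis $v=\sigma_M|_m$ for some $\sigma\in\Gamma(A)$ with $\alpha_A(\sigma)|_q=0$, where $q=\Phi(m)$. The Lie algebroid action gives $T\Phi(\sigma_M|_m)=\a_A(\sigma)|_q$, hence $\a_A(\sigma)|_q=T\Phi(v)=0$, so $\sigma|_q=0\in A_q$; because the action is a bundle map $\Phi^*A\to TM$, the value $\sigma_M|_m$ depends only on $\sigma|_q$, forcing $v=\sigma_M|_m=0$.

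For the main direction~\eqref{it:d3}$\Rightarrow\,\subseteq$, my plan is to work pointwise by Lagrangian linear algebra. Let $R_m\subseteq\T M|_m\oplus\overline{\T_\eta Q|_q}$ denote the fiber at $m$ of the graph of the Courant morphism $\T_\omega\Phi$; this is a Lagrangian subspace with respect to the difference of the standard metrics. The standard linear-algebraic fact that the forward image of a Lagrangian subspace under a Lagrangian relation is again Lagrangian yields that
\[R_m(TM|_m)\;=\;\{w+\nu\in\T Q|_q:\exists\,v\in T_mM,\ w=T\Phi(v),\ \Phi^*\nu=-\iota_v\omega\}\]
is a Lagrangian subspace of $\T Q|_q$, hence of dimension $\dim Q$. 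On the other hand, the relation $\sigma_M|_m+0\sim_{\T_\omega\Phi}\sigma|_q$, which holds by the definition of the Lie algebroid action together with~\eqref{it:d2}, shows $A_q\subseteq R_m(TM|_m)$; equality of Lagrangians of the same dimension in $\T Q|_q$ then yields $A_q=R_m(TM|_m)$.

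With this identification in hand, the conclusion is direct: for $v\in\ker(\omega)|_m$ the pair $(v+0,\,T\Phi(v)+0)$ lies in $R_m$ (the required condition $\iota_v\omega=-\Phi^*0=0$ is immediate), so $T\Phi(v)+0\in R_m(TM|_m)=A_q$, i.e., $T\Phi(v)\in A_q\cap T_qQ$. Choosing any $\sigma\in\Gamma(A)$ with $\sigma|_q=T\Phi(v)+0$ (automatically $\alpha_A(\sigma)|_q=0$), both $v$ and $\sigma_M|_m$ are $\T_\omega\Phi$-related to $\sigma|_q$, and uniqueness---which is precisely~\eqref{it:d3}---forces $v=\sigma_M|_m$. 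I expect the Lagrangian dimension count $A_q=R_m(TM|_m)$ to be the main conceptual step; beyond it, the argument is a direct unwinding of the defining relation of $\T_\omega\Phi$.
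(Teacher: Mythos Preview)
Your proof is correct and reaches the same endpoint as the paper---both arguments produce, for a given $v\in\ker(\omega)|_m$, a section $\sigma$ with $\sigma|_q=(T\Phi(v),0)\in A_q$ and then invoke~\eqref{it:d3} to conclude $v=\sigma_M|_m$. The difference lies only in how you show $(T\Phi(v),0)\in A_q$. The paper does this by a one-line orthogonality computation: using~\eqref{it:d2}, for every $\tau\in\Gamma(A)$ one has $0=\omega(v,\tau_M|_m)=\langle (T\Phi(v),0),\tau|_q\rangle$, so $(T\Phi(v),0)\in A_q^\perp=A_q$. Your route instead establishes the Lagrangian identity $A_q=R_m(TM|_m)$ via the forward-image-of-Lagrangians fact and the inclusion $A_q\subseteq R_m(TM|_m)$, and then reads off $(T\Phi(v),0)\in R_m(TM|_m)=A_q$. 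Your approach is a bit more structural (the identity $A_q=R_m(TM|_m)$ is interesting in its own right, being another way to phrase the existence half of the Dirac morphism condition), while the paper's is more economical for this particular statement. You also make the easy converse direction $\subseteq\Rightarrow$~\eqref{it:d3} explicit, which the paper leaves to the reader.
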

 \end{tcolorbox}
 
\begin{proof}
Let $v\in \ker(\omega)|_m$, with image $w=T\Phi(v)\in TQ|_{\Phi(m)}$. For all $\tau\in \Gamma(A)$ we have 
\[ 0=\omega(v,\tau_M|_m)=\iota(v) \Phi^* \alpha_A(\tau)|_m=
\iota(w) \alpha_A(\tau)|_{\Phi(m)}
=\l (w,0),\tau|_{\Phi(m)}\r.\]
This means that $(w,0)\in A_{\Phi(m)}^\perp=A_{\Phi(m)}$. Hence, there exists $\sigma\in \Gamma(A)$ such that 
\[ \a_A(\sigma)|_{\Phi(m)}=w,\ \ \ \alpha_A(\sigma)|_{\Phi(m)}=0.\] 
The difference 
$v-\sigma_M|_m$ lies in $\ker(\omega)\cap \ker(T\Phi)$, and hence is zero by \eqref{it:d3}. 
\end{proof}

For an action of a Lie algebroid  $A$ on a manifold $M$, the set of vector fields $\{\sigma_M|\ \sigma\in \Gamma(A)\}$ 
are a locally finitely generated Lie subalgebra of $\mf{X}(M)$.  
They define a singular foliation on $M$ (in the sense of Androulidakis-Skandalis \cite{and:hol}), with leaves the $A$-orbits. For $m\in M$, let $S_m=\{\sigma_M|_m|\ \sigma\in \Gamma(A)\}\subset TM|_m$ be the tangent space to the orbit, and $\k_m$ the \emph{stabilizer Lie algebra},  given by the exact sequence
\[ 0\lra \k_m\lra A_{\Phi(m)}\lra S_m\lra 0.\]
The Lie bracket on $\k_m$ is inherited from the bracket on sections of $A$.

\begin{tcolorbox}
	\begin{proposition}\label{prop:range}
Suppose  $M$ is a Hamiltonian $A$-space defined by a Dirac morphism \eqref{eq:ham}. 
For all $m\in M$ we have 
\[ \on{ann}(\on{ran}(T\Phi|_m))= \k_m.\]
where $\k_m\subset A_{\Phi(m)}\cap T^*Q|_{\Phi(m)}$ is regarded as a subspace of $T^*Q|_{\Phi(m)}$. 
Similarly, the map $\omega^\flat\colon TM\to T^*M$ restricts to 
an isomorphism 
\[ \ker(T\Phi|_m)\cong \on{ann}(S_m).\]
	\end{proposition}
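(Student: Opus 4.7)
My plan is to extract everything from a pointwise reformulation of the Dirac morphism condition: for each $a = (w,\nu) \in A_{\Phi(m)}$ there is a unique $v \in TM|_m$ with $T\Phi(v) = w$ and $\iota_v\omega = -\Phi^*\nu$. This $v$ is precisely $\rho_m(a) := \sigma_M|_m$ for any section $\sigma \in \Gamma(A)$ extending $a$, which yields two basic identities: $\a_A = T\Phi \circ \rho_m$ (reading off vector parts) and $\iota_{\rho_m(a)}\omega = -\Phi^*\nu$ (reading off covector parts).

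For the first equality, I would start with the easy inclusion $\k_m \subset \on{ann}(\on{ran}(T\Phi|_m))$: if $\rho_m(a) = 0$, both identities force $w = \a_A(a) = 0$ and $\Phi^*\nu = 0$, so $a = (0,\nu)$ with $\nu$ annihilating $\on{ran}(T\Phi|_m)$. For the reverse inclusion, take $\nu \in T^*Q|_{\Phi(m)}$ with $\Phi^*\nu = 0$; the key step is to show that $(0,\nu)$ already belongs to $A$. Pairing $(0,\nu)$ with an arbitrary $a' = (w',\nu') \in A_{\Phi(m)}$ and using $w' = T\Phi(\rho_m(a'))$, the Courant pairing becomes $\iota_{w'}\nu = (\Phi^*\nu)(\rho_m(a')) = 0$, so $(0,\nu) \in A^\perp = A$. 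Since its anchor is zero, uniqueness in the Dirac morphism property forces $\rho_m((0,\nu)) = 0$, placing $(0,\nu)$ in $\k_m$.

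For the second statement, I would send $v \in \ker(T\Phi|_m)$ to $\iota_v\omega \in T^*M|_m$. The compatibility $\iota_{\rho_m(a)}\omega = -\Phi^*\nu$ gives $\iota_v\omega(\rho_m(a)) = \omega(v,\rho_m(a)) = (\Phi^*\nu)(v) = \nu(T\Phi(v)) = 0$, so the image lies in $\on{ann}(S_m)$. Injectivity is immediate from the minimal degeneracy condition $\ker(\omega) \cap \ker(T\Phi) = 0$. Rather than constructing preimages by hand, I would obtain surjectivity by a dimension count: part one gives $\dim\on{ran}(T\Phi|_m) = \dim Q - \dim\k_m$, hence $\dim\ker(T\Phi|_m) = \dim M - \dim Q + \dim\k_m$; and since $A$ is Lagrangian of rank $\dim Q$, the defining exact sequence for $\k_m$ yields $\dim S_m = \dim Q - \dim\k_m$, so $\dim\on{ann}(S_m) = \dim M - \dim Q + \dim\k_m$ as well. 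Injective plus equal dimensions gives the isomorphism.

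The step I expect to be the main obstacle is the reverse inclusion in part one: \emph{a priori} knowing $\Phi^*\nu = 0$ does not automatically place $(0,\nu)$ in $A$, and this is really where the Lagrangian property $A = A^\perp$ gets used, coupled with the compatibility $\a_A = T\Phi \circ \rho_m$ read off from the Dirac morphism. Everything else — the easy inclusion, the momentum map computation, the injectivity, and the dimension count — is essentially formal once this lifting step is in place.
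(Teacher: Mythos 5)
Your proof is correct and follows essentially the same route as the paper: identify $\k_m$ with the elements of $A_{\Phi(m)}$ that are $\T_\omega\Phi$-related to $0\in TM|_m$ (giving $w=0$, $\Phi^*\nu=0$), and obtain the second statement from the momentum-map identity, minimal degeneracy, and a dimension count based on part one. The only difference is one of explicitness: where the paper invokes the Dirac morphism condition to say that $A_{\Phi(m)}$ is exactly the set of elements related to some element of $TM|_m$, you verify the needed instance directly, showing $(0,\nu)\in A^\perp=A$ by pairing against $A_{\Phi(m)}$ and then using uniqueness of the related element — a welcome filling-in of the step the paper leaves implicit.
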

\end{tcolorbox}
\begin{proof}
By definition of a Dirac morphism, $A_{\Phi(m)}$ consists of all elements of $w+\nu\in \T_\eta Q|_{\Phi(m)}$ that are $\T_\omega\Phi$-related to \emph{some} element of $TM|_m$. The subspace $\k_m\subset A_{\Phi(m)}$ consists of elements 
that are related to $0\in TM|_m$. This means that $w=0$ and $(T\Phi|_m)^*\nu=0$, i.e., 
$\nu\in \on{ann}(\on{ran}(T\Phi|_m))$. 

For the second part, note that since $\ker(\omega)\cap \ker(T\Phi)=0$, the map $\omega^\flat$ restricts to an injection on $\ker(T\Phi|_m)$. Since 
\[ \omega(\sigma_M|_m,v)=\iota(v)\iota(\sigma_M|_m)\omega=-\iota(v)\Phi^*\alpha_A(\sigma)|_m,\]
the image $\omega^\flat(\ker(T\Phi|_m))$ is contained in $\on{ann}(S_m)$. By dimension count (using the first part), this is an equality. 
\end{proof}

\begin{remark}
This proposition generalizes a well-known fact for Hamiltonian group actions in symplectic geometry (see, e.g., \cite{gu:sy}): If $(M,\omega)$ is a symplectic manifold with a Hamiltonian action of a Lie group $G$, with momentum map $\Phi\colon M\to \g^*$, then 
\[ \ker(T\Phi|_m)=T_m(G.m)^\omega,\ \ \on{ran}(T\Phi|_m)=\on{ann}(\g_m),\]
where $\g_m\subset \g$ are the stabilizer algebras.   	
\end{remark}

Hamiltonian spaces for the Cartan-Dirac structure (Section \ref{subsec:definitions}) are the quasi-Hamiltonian spaces from \cite{al:mom}. More precisely, one obtains an equivalence with quasi-Hamiltonian \emph{$\g$-spaces}. The integration to a $G$-action may be put in by hand, or more conceptually by the notion of Hamiltonian space for 
 the quasi-symplectic groupoid integrating the Cartan-Dirac structure \cite{bur:int,xu:mom}. The moduli spaces $\M_G(\Sigma,\V)$ are Hamiltonian  space for a Dirac structure 
on $G^\E$; this will be explained below. 

\subsection{Admissible functions}
The following result is due to Bursztyn, Iglesias-Ponte, and \v{S}evera \cite[Section 3.4]{bur:cou}; in the case of quasi-Hamiltonian $G$-spaces it was proved in \cite{al:mom}.
\begin{tcolorbox}
	\begin{proposition}\label{prop:admissible}  \cite{bur:cou}
		Let $M$ be a Hamiltonian $A$-space defined by a Dirac morphism \eqref{eq:ham}. 
		Suppose $f\in C^\infty(M)$ is invariant under the $A$-action on $M$, with momentum map $\Phi$. Then there is a unique vector field 
		$X_f$ such that 
		\[ \iota(X_f)\omega=-\d f,\ \ \ X_f\sim_\Phi 0.\]
		This vector field satisfies 
		\[ \L_{X_f}\omega=0.\]
	\end{proposition}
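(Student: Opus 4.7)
The plan is to establish the three assertions --- uniqueness, existence, and the invariance $\L_{X_f}\omega = 0$ --- in turn. Uniqueness will fall out of the minimal degeneracy condition (iii) in the definition of a Hamiltonian space, pointwise existence will be extracted from Proposition \ref{prop:range}, and invariance will be a short Cartan-formula computation exploiting that $X_f \sim_\Phi 0$.

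For uniqueness, I would observe that if $X_f$ and $X_f'$ both satisfy the two defining conditions, their difference lies in $\ker(T\Phi)$ and contracts trivially with $\omega$; hence it lies in $\ker(\omega) \cap \ker(T\Phi)$, which vanishes, so $X_f = X_f'$.

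For existence, the key input is that $A$-invariance of $f$ gives $\iota(\sigma_M)\d f = \L_{\sigma_M}f = 0$ for every $\sigma \in \Gamma(A)$, so at each point $m$ the covector $\d f|_m$ lies in $\on{ann}(S_m)$. Proposition \ref{prop:range} then supplies a canonical isomorphism $\omega^\flat\colon \ker(T\Phi|_m) \to \on{ann}(S_m)$, which inverts to yield a unique element $X_f|_m$ with $\iota(X_f|_m)\omega = -\d f|_m$ and $T\Phi(X_f|_m) = 0$. The main obstacle will be smoothness of $m \mapsto X_f|_m$, since neither $\ker(T\Phi|_m)$ nor $\on{ann}(S_m)$ need have locally constant rank. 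The plan for handling this is to work locally via the Lie algebroid structure: choose local generating sections $\sigma_1,\ldots,\sigma_k$ of $A$ near $\Phi(m_0)$, express the defining conditions for $X_f$ as a smoothly varying linear system on $TM$, and exploit the pointwise unique solvability together with constant rank of the relevant map on an appropriate smooth complement. (If one prefers, this smoothness can be imported directly from the Dirac-geometric machinery of \cite{bur:cou}.)

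Once $X_f$ is known to be smooth, invariance is immediate from Cartan's magic formula:
\[
\L_{X_f}\omega \;=\; \d\,\iota(X_f)\omega + \iota(X_f)\,\d\omega \;=\; -\d\,\d f - \iota(X_f)\,\Phi^*\eta,
\]
and the last term vanishes because $T\Phi(X_f) = 0$ forces $(\iota(X_f)\Phi^*\eta)(v_1,v_2) = \eta(T\Phi(X_f), T\Phi(v_1), T\Phi(v_2)) = 0$ for any tangent vectors $v_1,v_2$. Thus $\L_{X_f}\omega = 0$, as desired.
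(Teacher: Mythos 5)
Your proposal is correct and follows essentially the paper's own argument: uniqueness from $\ker(\omega)\cap\ker(T\Phi)=0$, pointwise existence from invariance of $f$ (so $\d f|_m\in\on{ann}(S_m)$) combined with Proposition \ref{prop:range}, and $\L_{X_f}\omega=0$ via Cartan's formula using $X_f\sim_\Phi 0$. The smoothness issue you flag is dealt with in the paper more cleanly than by local generators: one combines the two defining conditions into the single bundle map $TM\to\Phi^*TQ\oplus T^*M$, $v\mapsto(T\Phi(v),-\iota_v\omega)$, which is fiberwise injective by the minimal degeneracy condition and hence has image a smooth subbundle of constant rank $\dim M$; since the smooth section $(0,\d f)$ takes values pointwise in this image, its preimage $X_f$ is automatically a smooth vector field, with no constant-rank hypothesis needed on $\ker(T\Phi)$ or $\on{ann}(S_m)$.
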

\end{tcolorbox}
\begin{proof}
Invariance of $f$ means that $\d f|_m\in \on{ann}(S_m)$ for all $m$.
By Property \eqref{it:c} of a Hamiltonian $A$-space, the vector bundle map 
\[ TM\to \Phi^*TQ \oplus T^*M ,\ v\mapsto (T\Phi(v),-\iota_v\omega)\] 
is injective.  Proposition \ref{prop:range} shows that the elements $(0,\d f|_m)$ lie in its range. 
We hence obtain $X_f$ as the pre-image of $(0,\d f)$ under that map. 
The second claim follows by Cartan's identity: 
\[ \L_{X_f}\omega=\iota_{X_f}\d\omega+\d\iota_{X_f}\omega=
-\iota_{X_f}\Phi^*\eta-\d\d f=0.\]
Here we used $X_f\sim_\Phi 0$.
\end{proof}

In the terminology of Courant \cite{cou:di}, the function $f$ is \emph{admissible}. Note that the conditions on $X_f$ mean that 
\begin{equation}\label{eq:hamvf1} X_f+\d f\sim_{\T_\omega \Phi} 0.\end{equation}


%
\begin{tcolorbox}
	\begin{proposition}
The formula 		\begin{equation}\label{eq:poissonbracket}\{f,g\}=\L_{X_f} g\end{equation}
defines a Poisson bracket on the  space of $A$-invariant functions, 
with 
\begin{equation}\label{eq:lie} X_{\{f,g\}}=[X_f,X_g].\end{equation}
	\end{proposition}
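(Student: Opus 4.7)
The plan is to first establish a key commutator identity — for any $\sigma\in\Gamma(A)$ and any $A$-invariant $f$, the vector field $[\sigma_M, X_f]$ vanishes — and then use it, together with uniqueness in Proposition \ref{prop:admissible}, to derive $A$-invariance of $\{f,g\}$, the commutator formula $X_{\{f,g\}} = [X_f, X_g]$, and finally the Jacobi identity. Skew-symmetry and Leibniz will come essentially for free from the definition.

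The main step — and the place where condition \eqref{it:d3} (minimal degeneracy) is genuinely used — is the identity $[\sigma_M, X_f] = 0$. The strategy is to show that the bracket lies in both $\ker(\omega)$ and $\ker(T\Phi)$. Using the Cartan-calculus identity $\iota_{[\sigma_M, X_f]} = [\L_{\sigma_M}, \iota_{X_f}]$ applied to $\omega$, combined with $\iota_{X_f}\omega = -\d f$, one gets
\[
\iota_{[\sigma_M, X_f]}\omega = \L_{\sigma_M}(-\d f) - \iota_{X_f}\L_{\sigma_M}\omega.
\]
The first term vanishes by $A$-invariance of $f$. For the second, I would observe that $\iota_{\sigma_M}\omega = -\Phi^*\alpha_A(\sigma)$ and $\d\omega = -\Phi^*\eta$ together exhibit $\L_{\sigma_M}\omega$ as a $\Phi$-pullback of a form on $Q$, so its contraction with $X_f$ (which is $\Phi$-related to $0$) vanishes. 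Hence $[\sigma_M, X_f] \in \ker(\omega)$; and since $\sigma_M\sim_\Phi \a_A(\sigma)$ and $X_f\sim_\Phi 0$, one also has $[\sigma_M, X_f]\sim_\Phi 0$, so minimal degeneracy forces $[\sigma_M, X_f] = 0$.

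With this in hand, $A$-invariance of $\{f,g\} = \L_{X_f}g$ is immediate: $\L_{\sigma_M}\L_{X_f}g = \L_{X_f}\L_{\sigma_M}g + \L_{[\sigma_M,X_f]}g = 0$. The commutator formula $X_{\{f,g\}} = [X_f, X_g]$ then follows from the uniqueness clause of Proposition \ref{prop:admissible}: $[X_f, X_g]\sim_\Phi 0$ is automatic, while $\L_{X_f}\omega = 0$ (again from Proposition \ref{prop:admissible}) yields
\[
\iota_{[X_f, X_g]}\omega = \L_{X_f}\iota_{X_g}\omega - \iota_{X_g}\L_{X_f}\omega = -\d\L_{X_f}g = -\d\{f,g\}.
\]

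Skew-symmetry of $\{\cdot,\cdot\}$ follows from $\{f,g\} = \omega(X_f, X_g)$, obtained by expanding $\L_{X_f}g = \iota_{X_f}\d g$ via $\d g = -\iota_{X_g}\omega$; Leibniz is inherited from the derivation property of $X_f$; and Jacobi is a direct consequence of the commutator formula, since
\[
\{\{f,g\}, h\} = \L_{[X_f, X_g]}h = \L_{X_f}\L_{X_g}h - \L_{X_g}\L_{X_f}h = \{f,\{g,h\}\} - \{g,\{f,h\}\},
\]
which rearranges to the usual Jacobi identity via skew-symmetry. The only real obstacle is the vanishing $[\sigma_M, X_f] = 0$; everything else reduces to routine Cartan calculus once that point is secured.
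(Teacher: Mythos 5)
Your proof is correct, and its core is the same as the paper's: you verify $[X_f,X_g]\sim_\Phi 0$, compute $\iota([X_f,X_g])\omega=\L_{X_f}\iota(X_g)\omega-\iota(X_g)\L_{X_f}\omega=-\d\{f,g\}$ using $\L_{X_f}\omega=0$, obtain skew-symmetry from $\{f,g\}=\omega(X_f,X_g)$, and deduce Jacobi from the commutator formula --- exactly the chain of identities in the paper's proof. What you do differently is to insert the preliminary lemma $[\sigma_M,X_f]=0$ for all $\sigma\in\Gamma(A)$, proved by showing the bracket lies in $\ker(\omega)\cap\ker(T\Phi)$ (your observation that $\L_{\sigma_M}\omega=-\Phi^*\bigl(\d\alpha_A(\sigma)+\iota_{\a_A(\sigma)}\eta\bigr)$ is a $\Phi$-pullback, so that its contraction with $X_f$ vanishes, is the right computation) and invoking minimal degeneracy. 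This lemma is not in the paper, and it buys you something the paper leaves implicit: the $A$-invariance of $\{f,g\}$, which is needed both for the bracket to close on the space of invariant functions and for $X_{\{f,g\}}$ in \eqref{eq:lie} to be defined at all via Proposition \ref{prop:admissible}. So your argument is slightly longer but more self-contained; the paper's is terser and tacitly treats the invariance of $\{f,g\}$ as evident (it does follow, e.g., by your argument, or by noting that the flow of $\sigma_M$ preserves $\omega$, $\Phi$ and $f$, hence preserves $X_f$). Both are valid; yours closes a small gap that a careful reader of the paper has to fill in anyway.
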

\end{tcolorbox}
\begin{proof}
Given $A$-invariant functions $f,g$, the properties $X_f\sim_\Phi 0,\ X_g\sim_\Phi 0$ imply 
$[X_f,X_g]\sim_\Phi 0$. Furthermore, 
\[ \iota( [X_f,X_g])\omega=\L_{X_f} \iota(X_g)\omega-\iota(X_g)\L(X_f)\omega=-\L_{X_f}d g=-\d (\L_{X_f}g)=-\d \{f,g\}.\]
This proves \eqref{eq:lie}. Skew symmetry of the bracket \eqref{eq:poissonbracket} follows from 
$\{f,g\}=-\iota(X_f)\d g=\iota(X_f)\iota(X_g)\omega$, and 
the Jacobi identity  follows by applying 
\eqref{eq:lie} to a third $A$-invariant function. 
\end{proof}

In general, the $A$-action on $M$ may not be well-behaved, and hence the space of global $A$-invariant functions may be rather 
small. However, consider the open subset 
\[ M_{\on{reg}}=\{m\in M|\ \k_m=0\}\] 
on which the $A$-action is regular, i.e, where all leaves have maximal dimension $\on{rank}(A)$.  On this set, every point admits an open neighborhood $U$ on which the foliation by $A$-orbits is fibrating. 
Hence, we can consider  Hamiltonian vector fields $X_f$ for \emph{local} $A$-invariant functions  $f\in C^\infty(U)$.  
By Proposition \ref{prop:range}, the span of local Hamiltonian vector fields is exactly $\ker(T\Phi)\subset TM$. 
If the $A$-action on $M_{\on{reg}}$ is \emph{globally} fibrating, the Poisson structure on $A$-invariant functions descends to a Poisson structure on the orbit space $M_{\on{reg}}/\sim$ with symplectic leaves given by the 
images, under the quotient map, of intersections of $A$-orbits with the fibers of $\Phi$. 

\subsection{Quasi-Poisson structures}
Let $M$ be a Hamiltonian $A$-space for a Dirac structure $A\subset \T_\eta Q$. 
 The Poisson bracket on $A$-invariant functions may be extended to a \emph{quasi-Poisson structure} on \emph{all} functions, as follows. Let $B\subset \T_\eta Q$ be a Lagrangian 
subbundle (not necessarily a Dirac structure) transverse to $A$. This defines a Lagrangian splitting
\[ \T_\eta Q=A\oplus B.\] 
The preimage  $(\T_\omega\Phi)^{-1}B\subset \T M$ consists of all elements $v+\mu$ such that $v+\mu\sim_{\T_\omega\Phi} y$ for some $y\in B$. It is a Lagrangian subbundle, which is transverse to $TM$. (Indeed, $v\in TM$ with $v\sim_{\T_\omega\Phi} y$ would mean $y\in A$.) See, e.g., \cite[Section 1.8]{al:pur}. 
It is hence the graph of a bivector field $\pi\in \Gamma(\wedge^2(TM))$. If $f$ is a (local) $A$-invariant function, defining a Hamiltonian vector field $X_f$, then $X_f+\d f\in (\T_\omega\Phi)^{-1}B$ by \eqref{eq:hamvf1}. Consequently, 
$X_f=\pi^\sharp(\d f)$. It follows that $\{f,g\}=\pi(\d f,\d g)$ for all (local) $A$-invariant functions. The construction of 
$\pi$ depends on the choice of $B$, but in practice it often happens that there is a canonical choice. 
We also remark that if $B$ is integrable, i.e., a Dirac structure, then its pre-image under $\T_\omega\Phi$ is again integrable, and hence is the graph of a \emph{Poisson} structure.

\subsection{Cross sections}
Suppose $A\subset \T_\eta Q$ is a Dirac structure. Given a smooth map $f\colon Q'\hra Q$, with the property that $f$ is transverse to 
the anchor map $\a_A\colon A\to TQ$, one obtains a \emph{pullback Dirac structure}
\[ f^!A=(\T f)^{-1}(A)\subset \T_{\eta'} Q'\]
with respect to $\eta'=f^*\eta_Q$. The bundle $f^!A$ consists of all 
$v'+\mu'\in \T Q'$ for which there exists $v+\mu\in A$ with $v=f_*v'$ and $\mu'=f^*\mu$. 
We will mainly need this construction for the case of an embedding as a submanifold. Hamiltonian spaces for 
$A\subset \T_\eta Q$ give rise to Hamiltonian spaces for the pullback. The following fact 
is a Dirac-geometric analogue of the Symplectic Cross Section Theorem of Guillemin-Sternberg \cite{gu:sy}. 

\begin{tcolorbox}
	\begin{proposition}[Cross sections]\label{prop:cross}
		Let $(M,\omega,\Phi)$ be a Hamiltonian space for the Dirac structure $A\subset \T_\eta Q$, and 
		\[ \iota\colon Q'\hra  Q\] 
		a submanifold transverse to the anchor $\a_A$ (and hence also to the map $\Phi$). Let 
		$M'=\Phi^{-1}(Q')$, and let  $\omega'$ be the pullback of $\omega$ and $\Phi'$ the restriction of $\Phi$. 
		Then $(M',\omega',\Phi')$ is a Hamiltonian space for the Dirac structure $A'=\iota^!A\subset \T_{\eta'} Q'$.    	
	\end{proposition}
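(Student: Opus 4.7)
The plan is to verify the three defining conditions (d1)--(d3) from Section \ref{subsec:definitions} for the triple $(M',\omega',\Phi')$, after first making sure that $M'$ and $A'$ are well-defined. Since every element of $A_{\Phi(m)}$ is related via $\T_\omega\Phi$ to some element of $T_mM$, the image of the anchor $\a_A$ at $\Phi(m)$ sits inside the image of $T\Phi|_m$; hence transversality of $\iota\colon Q'\hra Q$ to $\a_A$ implies transversality of $\iota$ to $\Phi$, so that $M'=\Phi^{-1}(Q')$ is a smooth submanifold with $T_mM'=(T\Phi|_m)^{-1}(T_{\iota(q')}Q')$. The pullback Dirac structure $A'=\iota^!A$ is well-defined by the transversality hypothesis (a standard fact of Dirac geometry), and its anchor and the $1$-form component of a section $\sigma'=\iota^!\sigma$ (locally lifted from $\sigma\in\Gamma(A)$ with $\a_A(\sigma)$ tangent to $Q'$) are $\a_{A'}(\sigma')=T\iota^{-1}\a_A(\sigma)$ and $\alpha_{A'}(\sigma')=\iota^*\alpha_A(\sigma)$.

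Condition (d1) is immediate: writing $\iota_{M'}\colon M'\hra M$, we have $\d\omega'=\iota_{M'}^*\d\omega=-\iota_{M'}^*\Phi^*\eta=-(\Phi')^*\iota^*\eta=-(\Phi')^*\eta'$. For condition (d2), I first check that for a section $\sigma\in\Gamma(A)$ with $\a_A(\sigma)|_{Q'}$ tangent to $Q'$, the generator $\sigma_M$ is tangent to $M'$ at points of $M'$: indeed $T\Phi(\sigma_M|_m)=\a_A(\sigma)|_{\Phi(m)}\in T_{\Phi(m)}Q'$, so $\sigma_M|_m\in T_mM'$ by the transversality description of $T_mM'$ above. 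The restriction $\sigma_M|_{M'}$ is precisely $\sigma'_{M'}$, and
\[
\iota(\sigma'_{M'})\omega'=\iota_{M'}^*\iota(\sigma_M)\omega=-\iota_{M'}^*\Phi^*\alpha_A(\sigma)=-(\Phi')^*\alpha_{A'}(\sigma'),
\]
which is exactly (d2).

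The main point, and the only one requiring real work, is the minimal degeneracy condition (d3). I plan to prove it by reducing it to the corresponding condition for $M$: given $v\in\ker(\omega')|_m\cap\ker(T\Phi')|_m$, I want to show $v\in\ker(\omega)|_m\cap\ker(T\Phi)|_m$ and then conclude $v=0$. Clearly $v\in\ker(T\Phi)|_m$, so the issue is to check $\omega|_m(v,w)=0$ for \emph{every} $w\in T_mM$, not just $w\in T_mM'$. The key idea is a decomposition trick: by transversality of $\iota$ to $\a_A$, one has $T_{\Phi(m)}Q=T_{\Phi(m)}Q'+\on{ran}(\a_A|_{\Phi(m)})$, and since $\on{ran}(\a_A)\subset\on{ran}(T\Phi)$, we can write any $w\in T_mM$ as $w=w'+\sigma_M|_m$ with $w'\in T_mM'$ and $\sigma\in\Gamma(A)$. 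Then $\omega|_m(v,w')=\omega'|_m(v,w')=0$, while the second contribution is
\[
\omega|_m(v,\sigma_M|_m)=-(\Phi^*\alpha_A(\sigma))|_m(v)=-\alpha_A(\sigma)|_{\Phi(m)}\bigl(T\Phi|_m(v)\bigr)=0
\]
using (d2) for $M$ and $T\Phi|_m(v)=0$. Hence $v\in\ker(\omega)|_m\cap\ker(T\Phi)|_m=\{0\}$. The decomposition step is the one place where the transversality assumption on $Q'$ is genuinely used, and it is the crux of the argument; the rest is bookkeeping.
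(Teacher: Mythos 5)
Your proposal is correct, and it is worth pointing out that the paper does not actually prove Proposition \ref{prop:cross} in the text: it defers to the more general cross-section theorem for arbitrary Dirac morphisms in \cite[Theorem B.7]{me:moma}. So your argument is a self-contained direct verification rather than a variant of an in-text proof, and its structure is the right one. The preliminary observation that $\on{ran}(\a_A|_{\Phi(m)})\subset \on{ran}(T\Phi|_m)$ (from the Dirac-morphism property of $\T_\omega\Phi$) correctly upgrades transversality to $\a_A$ into transversality to $\Phi$, giving $T_mM'=(T\Phi|_m)^{-1}(T_{\Phi(m)}Q')$; conditions (d1) and (d2) are indeed pullback computations; and the crux is (d3), which your decomposition $w=w'+\sigma_M|_m$ with $w'\in T_mM'$ handles exactly where transversality must enter. (Your sign in $\omega|_m(v,\sigma_M|_m)$ is off by $-1$, but the term vanishes anyway because $T\Phi|_m(v)=0$, so nothing is affected.) Two points deserve to be made explicit in a full write-up. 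First, the claim that every local section of $A'=\iota^!A$ arises as $\iota^!\sigma$ for some $\sigma\in\Gamma(A)$ whose anchor is tangent to $Q'$ along $Q'$ is where the transversality hypothesis is used a second time: it guarantees that $A\cap(TQ'\oplus T^*Q|_{Q'})$ has constant rank and surjects onto $A'$, so such lifts exist locally. Second, the characterization of Hamiltonian spaces by (d1)--(d3) presupposes a given Lie algebroid action of $A'$ on $M'$, so you must check that $\sigma'_{M'}:=\sigma_M|_{M'}$ is independent of the chosen lift $\sigma$; this follows from the same kernel computation as your (d3) step (the difference of the generators of two lifts lies in $\ker(\omega')\cap\ker(T\Phi')$), or can be bypassed entirely by verifying the Dirac-morphism condition of Definition \ref{def:ham} directly: your existence computation shows every element of $A'_{\Phi(m)}$ is $\T_{\omega'}\Phi'$-related to some element of $T_mM'$, and uniqueness is again precisely your (d3) argument. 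With these small additions the proof is complete.
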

\end{tcolorbox}
More generally, there is a Cross Section Theorem for abritrary Dirac morphisms. For the precise 
statement (and its proof) see \cite[Theorem B.7]{me:moma}.   See \cite{al:qu,balibanu2023reduction,cro:log} for related versions.

\subsection{Dirac structures associated with closed polygons} 
Let $C$ be a compact oriented 1-manifold, with vertices $\V$ and oriented edges $\E$.  (In our application to moduli spaces for surfaces, this will be $C=\p\Sigma$.) Thus $\M_G(C,\V)=G^\E$ with the natural action of $G^\V$. 
 On $G^\E$, we have the closed 3-form $\eta^\E=\sum_{\ez\in\E} g_{\ez}^*\eta$, and the corresponding standard 
Courant algebroid $\T_{\eta^\E}\M_G(C,\V)=(\T_\eta G)^\E$. 
Equation  \eqref{eq:striv} defines a trivialization 
\[ (\T_\eta G)^\E \cong 
G^\E\times (\ol{\g}\oplus \g)^\E.
\]
Every Lagrangian Lie subalgebra $\mf{l}=(\ol{\g}\oplus \g)^\E$ defines a Dirac structure $G^\E\times \mf{l}$. We will take 
$\mf{l}$ to be $\g^\V$, regarded as a Lie subalgebra by the inclusion 
\[ \g^\V\to (\ol{\g}\oplus \g)^\E\]
taking the function $\vz\mapsto \xi_\vz$ to the function  $\ez\mapsto (\xi_{\tz(\ez)},\xi_{\sz(\ez)})$. 
Thus, $A=G^\E\times \g^\V$ is spanned by sections
\begin{equation}\label{eq:sez}\sigma(\xi)=\sum_{\ez} s^{\ez}({\xi}_{\tz(\ez)},{\xi}_{\sz(\ez)}),\end{equation}
where the superscript indicates that we put $s(\cdot,\cdot)$ on the copy of $G$ labeled by the edge $\ez$. 
%
The properties \eqref{it:a}, \eqref{it:b}, \eqref{it:c} of Theorem \ref{th:2form} may be rephrased as follows: 
\begin{tcolorbox}
\begin{proposition}\label{prop:dirgeom}
For a pair $(\Sigma,\V)$ satisfying (A1),(A2),(A3), the boundary holomomies define a  $G^\V$-equivariant Dirac morphism 
\begin{equation}\label{eq:diracformulation}
	 \T_\omega\Phi\colon (\T\M_G(\Sigma,\V),T \M_G(\Sigma,\V))\da ((\T_\eta G)^\E,A).\end{equation}
That is, the moduli space is a Hamiltonian space for this Dirac structure. 
\end{proposition}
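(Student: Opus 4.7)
The plan is to verify the three conditions in Definition \ref{def:ham} that characterize a Hamiltonian space, rephrasing them as the content of Theorem \ref{th:2form}. The data of the Dirac morphism \eqref{eq:diracformulation} is the pair $(\Phi,\omega)$. The first step is to recognize that $(\Phi,\omega)$ defines a Courant morphism $\T_\omega\Phi\colon \T M\dashrightarrow (\T_\eta G)^\E$ on $M=\M_G(\Sigma,\V)$ (equipped with the trivial 3-form): this amounts to the single identity $d\omega=-\Phi^*\eta^\E=-\sum_{\ez\in\E}\Phi_\ez^*\eta$, which is precisely property \eqref{it:a}.

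The second step is to produce, for each generating section $\sigma(\xi)\in\Gamma(A)$ as in \eqref{eq:sez}, a tangent vector on $M$ that is $\T_\omega\Phi$-related to its value at $\Phi(m)$. Under the trivialization \eqref{eq:striv}, the anchor part of $\sigma(\xi)$ is $\sum_{\ez}(\xi_{\sz(\ez)}^L-\xi_{\tz(\ez)}^R)$ and the 1-form part is $\alpha_A(\sigma(\xi))=\hh\sum_{\ez}g_\ez^*(\theta^L\cdot\xi_{\sz(\ez)}+\theta^R\cdot\xi_{\tz(\ez)})$. The relation $\xi_{\M_G(\Sigma,\V)}\sim_{\Phi_\ez}\xi_{\sz(\ez)}^L-\xi_{\tz(\ez)}^R$ noted just before Theorem \ref{th:2form} identifies the fundamental vector field $\xi_{\M_G(\Sigma,\V)}$ as the natural preimage of the anchor part under $T\Phi$. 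The remaining relation condition $\iota(\xi_{\M_G(\Sigma,\V)})\omega=-\Phi^*\alpha_A(\sigma(\xi))$ is exactly property \eqref{it:b}, so $\xi_{\M_G(\Sigma,\V)}|_m$ is genuinely $\T_\omega\Phi$-related to $\sigma(\xi)|_{\Phi(m)}$. This single identification simultaneously gives existence of preimages for every generator (hence, by $C^\infty$-linearity, for every element of $A_{\Phi(m)}$), furnishes the Lie algebroid comorphism $\Phi^*A\to TM$ realizing the $A$-action, and verifies the moment-map condition \eqref{it:d2}.

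The third step is uniqueness of preimages, equivalent to condition \eqref{it:d3}. If $w\in T_mM$ is $\T_\omega\Phi$-related to $0\in A_{\Phi(m)}$, then $T\Phi(w)=0$ and $\iota_w\omega=0$, so $w\in\ker(T\Phi|_m)\cap\ker(\omega)|_m$, which is zero by property \eqref{it:c}. This upgrades the Courant morphism to a Dirac morphism. The $G^\V$-equivariance is automatic from the $G^\V$-invariance of $\omega$ and the equivariance of $\Phi$ supplied by Theorem \ref{th:2form}, combined with the manifest invariance of the trivialized Dirac structure $A=G^\E\times\g^\V$ under the natural $G^\V$-action on $G^\E$.

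The only nontrivial input is property \eqref{it:c}, the minimal degeneracy condition, which is the one piece of Theorem \ref{th:2form} that has been stated but not proved in the preceding sections; I expect this to be the main obstacle. Once minimal degeneracy is granted, Proposition \ref{prop:dirgeom} is essentially a tautological repackaging of $(\omega,\Phi)$ in Dirac-theoretic language, but it is the right repackaging: it is exactly the form needed to deploy the cross-section theorem (Proposition \ref{prop:cross}) and the explicit kernel description (Proposition \ref{prop:kernel}) in the later Dirac-geometric arguments.
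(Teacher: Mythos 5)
Your reduction of the proposition to properties \eqref{it:a}, \eqref{it:b}, \eqref{it:c} of Theorem \ref{th:2form} is accurate as far as it goes — indeed the paper itself introduces Proposition \ref{prop:dirgeom} as a rephrasing of exactly those three properties, and your bookkeeping (property \eqref{it:a} gives the Courant morphism, \eqref{it:b} gives existence of related elements for the generating sections $\sigma(\xi)$, \eqref{it:c} gives uniqueness, equivariance is formal) is correct. But this is not a proof of the proposition: the minimal degeneracy property \eqref{it:c} has at this point only been established when $\Sigma$ is a disjoint union of polygons (where $\Phi$ is an embedding, so $\ker(T\Phi)=0$), and the logical direction in the paper is the reverse of yours. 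One does not grant \eqref{it:c} and deduce the Dirac morphism; one proves the Dirac morphism statement directly and thereby obtains \eqref{it:c}, since any Hamiltonian space satisfies $\ker(\omega)\cap\ker(T\Phi)=0$ by definition. By treating \eqref{it:c} as an input and flagging it as "the main obstacle" without addressing it, your proposal leaves the entire content of the proposition unproved — for a general $(\Sigma,\V)$ it establishes only a Courant morphism together with the relatedness of the generating sections, which was already known from Sections \ref{sec:2form}--\ref{sec:cutglue}.

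The missing argument is the one carried out in Section \ref{subsec:mindeg}: choose a gluing pattern, so that $\M_G(\Sigma,\V)=j^{-1}(Q)$ sits inside the polygon moduli space $\M_G(\wh\Sigma,\wh\V)$, where the Hamiltonian-space property is known; apply the cross-section theorem (Proposition \ref{prop:cross}) to the submanifold $Q\subset G^{\wh\E}$ cut out by the gluing relations $g_{\ez_1}=g_{\ez_2}^{-1}$, obtaining a Dirac morphism to $(\T_{\eta_Q}Q,\,j^!\wh{A})$; and then compose with the projection $\T\pi\colon (\T_{\eta_Q}Q,\,j^!\wh A)\da ((\T_\eta G)^\E,A)$. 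The substantive step is Lemma \ref{lem:remains}, i.e.\ that $\T\pi$ is a Dirac morphism: one checks that for glued edge pairs the sections $s^{\ez}(\xi_1,\xi_2)+s^{\ez'}(\xi_2,\xi_1)$ are tangent to the anti-diagonal and $\T\pi_{\ez',\ez}$-related to $0$, so that $\sigma(\wh\xi)\sim_{\T\pi}\sigma(\xi)$ exactly when $\wh\xi$ comes from $\xi\in\g^\V$ via the compatibility conditions at glued vertices, and that only $\wh\xi=0$ is related to $0$. Without some version of this transversality-plus-descent argument (or another genuine proof of \eqref{it:c}), the proposal does not close.
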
	
\end{tcolorbox}
Recall that we had postponed the proof of property \eqref{it:c} of Theorem \ref{th:2form}, except for the case that  
$\Sigma$ is a disjoint union of disks. We shall now use the Dirac-geometric formulation to complete the proof. 

\subsection{Proof of minimal degeneracy}\label{subsec:mindeg}
Suppose $(\Sigma,\V)$ satisfies (A1),(A2),(A3). In Subsection \ref{subsec:gluingpattern}, we constructed $\omega\in \Omega^2(\M_G(\Sigma,\V))$ as the pullback $\omega=\iota^*\wh{\omega}$ of the 2-form on 
$\M_G(\wh\Sigma,\wh\V)$, where $\wh{\Sigma}$ is a disjoint union of disks (polgons) obtained from $\Sigma$ by cutting, and 
\[ \iota\colon  \M_G(\Sigma,\V)\to \M_G(\wh\Sigma,\wh\V)\]
is the inclusion. This inclusion may be regarded as a cross section, as follows: Let $\p\wh{\Sigma}$ be the boundary of the cut surface, and $\p\wh{\Sigma}/\sim$ its image under the quotient map $\wh{\Sigma}\to \Sigma$. Thus,  $\p\wh{\Sigma}/\!\!\sim$ is an embedded graph in $\Sigma$, consisting of the boundary edges of $\Sigma$ together with the paths 
defining the cutting. The corresponding moduli space will be denoted by $Q$. 
The maps $\p\wh{\Sigma}\to \p\wh{\Sigma}/\!\!\sim\,\, \leftarrow \p\Sigma$ induce maps
\[ \M_G(\p\wh{\Sigma},\wh{\V})=G^{\wh{\E}}\stackrel{j}{\longleftarrow} 
\M_G(\p\wh{\Sigma}/\!\!\sim\,\,  ,\V)=Q\stackrel{\pi}{\longrightarrow} 
\M_G(\Sigma,\V)=G^\E.\]
The submanifold $Q\subset G^{\wh{\E}}$ consists of maps $g\colon \wh{\E}\to G$ such that $g_{\ez_1}=g_{\ez_2}^{-1}$ for every pair of `glued' edges; the quotient map $\pi$ omits components corresponding to glued edges. Observe that 
$\M_G(\Sigma,\V)=j^{-1}(\M_G(\wh\Sigma,\wh\V))$. 

Let $\wh{A}\subset (\T_\eta G)^{\wh{\E}}$ be the counterpart of $A$ for the cut surface. Since the 
2-form $\wh{\omega}$ for the cut surface is known to satisfy the properties of Theorem \ref{th:2form}, it defines a 
Dirac morphism $\T_{\wh{\omega}}\wh{\Phi}$ as in \eqref{eq:diracformulation}.

The anchor of $\wh{A}$ is transverse to the submanifold $Q$, due to the fact that $G^{\wh\V}.Q=G^{\wh\E}$. Hence, 
Proposition \ref{prop:cross}  gives a Dirac morphism
\begin{equation}\label{eq:firststep}
\T_\omega(\wh{\Phi}|_{j^{-1}(Q)})
\colon  (\T\M_G(\Sigma,\V),T \M_G(\Sigma,\V))\da (\T_{\eta_Q}Q,j^!\wh{A}),
\end{equation}
where $\eta_Q=j^*(\eta^{\wh\E})$. The submanifold $Q$ is a direct product 
\[ Q=G^\E\times \prod_{\{\ez',\ez\}}Q_{\ez',\ez}\] 
where the second product is over pairs of glued edges, and $Q_{\ez',\ez}\subset G\times G$ is the anti-diagonal $\Mult_G^{-1}(e)$. 
The map $\pi$ is projection to $G^\E$. Since the pullback of $\pr_1^*\eta+\pr_2^*\eta\in \Omega^3(G\times G)$ to the anti-diagonal vanishes, we have that $j^*\eta^{\wh\E}=\pi^*\eta^\E$. 
\begin{tcolorbox}
	\begin{lemma}\label{lem:remains}
The projection map $\pi$ defines a Dirac morphism 
\begin{equation}\label{eq:secondstep}
 \T\pi\colon 		 (\T_{\eta_Q}Q,j^!\wh{A})\da ((\T_\eta G)^\E,A).
\end{equation}
	\end{lemma}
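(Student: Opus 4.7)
The plan is to work in the explicit trivialization $\wh{A}\cong G^{\wh\E}\times \g^{\wh V}$ given by the sections $\sigma(\wh\xi)$ of \eqref{eq:sez}, and to exhibit the required lift $A\to j^!\wh{A}$ via the surjection $p\colon \wh V\to V$ induced by the gluing. Unwinding the definition of a Dirac morphism, I must show that for every $q\in Q$ and every $a=(v^*,\mu^*)\in A|_{\pi(q)}$ there is a unique $a'=(v,\mu)\in j^!\wh{A}|_q$ with $T\pi(v)=v^*$ and $\mu=\pi^*\mu^*$.

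For existence, given $a=\sigma(\xi)|_g$ with $\xi\in \g^V$, I would take $\wh\xi:=\xi\circ p\in\g^{\wh V}$ and let $a'$ be the element of $j^!\wh{A}|_q$ obtained from $\sigma(\wh\xi)|_q\in\wh{A}|_q$ by restriction. Three items need checking: tangency of its anchor to $Q$, agreement of the $\E$-components of its vector part with the anchor of $\sigma(\xi)$, and the identity $j^*\alpha_{\wh\xi}=\pi^*\alpha_\xi$. All three reduce to the same mechanism: on a glued pair $\{\hat e,\hat e'\}$, where $g_{\hat e'}=g_{\hat e}^{-1}$, the identities $\on{Inv}^*\theta^L=-\theta^R$ and $\on{Inv}^*\theta^R=-\theta^L$ collapse the combined 1-form contribution from the two edges to
\[
\tfrac12\, g_{\hat e}^*\bigl[\theta^L\cdot u+\theta^R\cdot w\bigr],\qquad u:=\wh\xi_{\sz(\hat e)}-\wh\xi_{\tz(\hat e')},\ \ w:=\wh\xi_{\tz(\hat e)}-\wh\xi_{\sz(\hat e')};
\]
since the gluing identifies $\sz(\hat e)\sim\tz(\hat e')$ and $\tz(\hat e)\sim\sz(\hat e')$ in $V$, both $u$ and $w$ vanish for $\wh\xi=\xi\circ p$. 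The free-edge contributions to $\alpha_{\wh\xi}$ tautologically pull back from $\alpha_\xi$, and the vector-part claim is immediate.

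For uniqueness, I would take the difference $a_0=a'_1-a'_2\in j^!\wh{A}|_q$ of two candidates, so that $T\pi(v_0)=0$ and $\mu_0=0$. The map $\sigma\colon \g^{\wh V}\to \wh{A}|_q$ is a bijection at every $q$: if $\sigma(\wh\xi)|_q=0$, then reading off both the vector and the covector contributions at each edge $\hat e$ yields $\wh\xi_{\tz(\hat e)}=\on{Ad}_{g_{\hat e}}\wh\xi_{\sz(\hat e)}$ and the opposite equation, forcing $\wh\xi=0$ at every vertex. Hence $a_0$ lifts to a unique $\wh\xi_0\in\g^{\wh V}$ with $V_{\wh\xi_0}(q)=Tj(v_0)$. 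For each paired edge, the vanishing of $\mu_0$ forces $w=-\on{Ad}_{g_{\hat e}}u$ via the display above, while the tangency of $v_0$ forces $w=+\on{Ad}_{g_{\hat e}}u$; these opposite-sign equations give $u=w=0$, so $\wh\xi_0$ descends to some $\xi_0\in\g^V$. Similarly, on a free edge $e$ the vanishing of $T\pi(v_0)$ gives $\wh\xi_{0,\tz(\hat e)}=\on{Ad}_{g_e}\wh\xi_{0,\sz(\hat e)}$ while $\mu_0=0$ gives the opposite relation, so $\wh\xi_0$ vanishes at both endpoints of every free edge. By (A2) and (A3) every $v\in V$ is such an endpoint, so $\xi_0\equiv 0$, hence $\wh\xi_0=0$ and $a_0=0$.

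The main obstacle is the careful sign bookkeeping in the paired-edge contributions under $\on{Inv}$; the essential point that closes the argument is that tangency and covector vanishing produce equations on $(u,w)$ with \emph{opposite} signs of $\on{Ad}_{g_{\hat e}}u$, so their simultaneous solution is trivial rather than the nonzero subspace along which uniqueness would naively fail.
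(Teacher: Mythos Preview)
Your proof is correct and follows essentially the same route as the paper's: lift $\xi\in\g^\V$ to $\wh\xi=\xi\circ p\in\g^{\wh\V}$ for existence, and for uniqueness show that any $\wh\xi_0$ producing an element of $j^!\wh A$ that is $\T\pi$-related to $0$ must vanish. Your explicit paired-edge computation (tangency giving $w=\Ad_{g_{\hat e}} u$, covector-vanishing giving $w=-\Ad_{g_{\hat e}} u$) is in fact more transparent than the paper's compressed assertion that tangency already implies \eqref{eq:property2}; one small expository point: you need not claim the lift $\wh\xi_0$ is \emph{unique}, only that \emph{any} such lift is zero, which already forces $Tj(v_0)=0$ and hence $a_0=0$.
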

\end{tcolorbox}
Writing $\Phi=\pi\circ \wh{\Phi}|_{j^{-1}(Q)}$, Proposition \ref{prop:dirgeom} then follows by composition of the Dirac morphisms 
\eqref{eq:firststep} and \eqref{eq:secondstep}. 

\begin{proof}
The composition of maps $\V\to \g$ with the quotient map $\wh{\V}\to \V$ determines an inclusion 
\begin{equation}\label{eq:inclusion}
\g^\V\to \g^{\wh{\V}},\ \ \xi\mapsto \wh{\xi}
\end{equation}
with the properties 
\begin{equation}\label{eq:property1}
\wh{\xi}_{\sz(\ez)}=\xi_{\sz(\ez)},\ 	\wh{\xi}_{\tz(\ez)}=\xi_{\tz(\ez)}.
\end{equation} 
for all non-glued edges $\ez\in \E\subset \wh{\E}$ and 
\begin{equation}\label{eq:property2} \wh\xi_{\tz(\ez')}=\wh\xi_{\sz(\ez)},\ \  \wh\xi_{\tz(\ez)}=\wh\xi_{\sz(\ez')}\end{equation}
for every pair $\{\ez',\ez\}\subset \wh{\E}-\E$ of glued edges. Conversely, using a dimension count, we see that 
a given map $\wh{\xi}$ lies in the image of this map 
if and only if it satisfies \eqref{eq:property2}; the pre-image $\xi$ is then determined from \eqref{eq:property1}. 

The subbundle $A\subset (\T_\eta G)^\E$ is spanned by sections of the form \eqref{eq:sez} with $\xi\in \g^\V$. The image $\wh{\xi}$ under the inclusion \eqref{eq:inclusion} defines a section 
 \begin{equation} \label{eq:lagrangiansection}
\sigma(\wh{\xi})=\sum_{\ez\in\wh{\E}} s^{\ez}(\wh{\xi}_{\tz(\ez)},\wh{\xi}_{\sz(\ez)}),\end{equation}
of $\wh{A}\subset (\T_\eta G)^{\wh\E}$. The condition \eqref{eq:property2} guarantees that the image of this section under the anchor is tangent to $Q$. We claim that 
\begin{equation}\label{eq:sect1}
\sigma(\wh{\xi})\sim_{\T\pi}\sigma(\xi).
\end{equation}

To see this, note that for every pair $\{\ez',\ez\}$ of glued edges, the sections of $(\T_\eta G)^{\ez',\ez}=
\T_\eta G\times \T_\eta G$ of the form 
\begin{equation}\label{eq:sect} s^{\ez}(\xi_1,\xi_2)+s^{\ez'}(\xi_2,\xi_1)\end{equation}
with $\xi_1,\xi_2\in \g$
span a Lagrangian subbundle whose image under the anchor is tangent to $Q_{\ez',\ez}$. The restriction to $Q_{\ez',\ez}$ therefore descends to a Lagrangian subbundle of 
\[ a^{-1}(TQ_{\ez',\ez})/a^{-1}(TQ_{\ez',\ez})^\perp=\T Q_{\ez',\ez}.\] 
In fact, since the differential form part of \eqref{eq:sect} vanishes when pulled back to $Q_{\ez',\ez}$, we see that it simply descends to $TQ_{\ez',\ez}$ -- the Lagrangian subbundle defining the Courant morphism $\T\pi_{\ez',\ez}\colon \T Q_{\ez',\ez}\to 0$. 
In other words, \eqref{eq:sect} is $\T\pi_{\ez',\ez}$-related to $0$.  The morphism $T\pi$ is the product of the morphism $\T\pi_{\ez',\ez}$ for glued edges and the identity morphisms of $(\T_\eta G)^\ez$ for the  edges in $\E\subset \wh{\E}$.  
We hence obtain \eqref{eq:sect1}.
Conversely, suppose $\wh{\xi}$ satisfies 
\begin{equation}\label{eq:finally}
 \sigma(\wh{\xi})|_{\wh{g}}\sim_{\T\pi}0
 \end{equation}
for some $\wh{g}\in Q$. In particular, the image of $\sigma(\wh{\xi})$ under the anchor is tangent to $Q$ at $\wh{g}$. 
This implies \eqref{eq:property2}. As observed above, this implies that $\wh{\xi}$ is the image of a unique $\xi\in\g^\V$
under the inclusion \eqref{eq:inclusion} where $\xi$ is determined by \eqref{eq:property1}. But \eqref{eq:finally} shows that $\xi=0$, and hence $\wh{\xi}=0$. 
\end{proof}

\subsection{Lagrangian boundary conditions}\label{subsec:lagrangian}
By similar arguments, we can complete the proof that for a 
collection of Lagrangian Lie subgroups $H_\ez$ attached to the edges, 
satisfying the conditions of Proposition \ref{prop:severa2}. As explained in the proof of Proposition \ref{prop:severa2}, 
the inclusion map $j\colon Q=\prod_\ez H_\ez\to G^\E$  is transverse to the $G^\V$-orbits in $G^\E$. Hence, the Cross Section Theorem \ref{prop:cross} applies, and gives a Dirac morphism
\[ \T_{\iota^*\omega}(\Phi|_{\Phinv(Q)})\colon\ (\T\Phi^{-1}(Q),T\Phi^{-1}(Q))\da (\T Q,j^!(G^\E\times \g^\V)).\]
Let $\pi\colon Q\to \pt$ be the constant map to a point. We claim that $\T\pi$ defines a Dirac morphism 
\[ \T\pi\colon  (\T Q,j^!(G^\E\times \g^\V))\da (0,0).\]
Suppose $\xi\in \g^\V$ is such that the image of $\delta(\xi)$ under the anchor is tangent to $Q$, at some given 
$h\in Q$ (with components $h_\ez\in H_\ez$).  
This means that for all $\ez$, the vector field $\xi_{\sz(\ez)}^L-\xi_{\tz(\ez)}^R$ on $G$ is tangent to $H_\ez$. That is, 
\[ \xi_{\sz(\ez)}-\Ad_{h_\ez}\xi_{\tz(\ez)}\in\h_\ez.\]
The 
resulting section $j^!\delta(\xi)$ of $j^!(G^\E\times \g^\V)$ satisfies $j^!\delta(\xi)\sim_{T\pi}0$ if and only if 
the 1-form part of $\delta(\xi)|_h$ pulls back to $0$ on $Q$. Equivalently, each 
\[ (\theta^L\cdot \xi_{\sz(\ez)}+\theta^R\cdot \xi_{\tz(\ez)})|_h\in T^*G|_{h_\ez}\]
pulls back to $0$ on $H_\ez$. 
Since $H_\ez$ is Lagrangian, this gives the condition 
 \[ \xi_{\sz(\ez)}+\Ad_{h_\ez}\xi_{\tz(\ez)}\in\h_\ez.\]
Taken together, we arrive at the condition $\xi_{\sz(\ez)},\ \xi_{\tz(\ez)}\in \h_\ez$ for all $\ez$. But since every vertex 
$\vz$ arises as $\sz(\ez)=\vz=\tz(\ez')$ for adjacent edges $\ez,\ez'$, and since $\h_\ez\cap \h_{\ez'}=0$ by assumption, we conclude $\xi=0$. This shows that $\T\pi$ is a Dirac morphism as desired. Letting $p=\pi\circ \Phi|_{\Phinv(Q)}\to \pt$ be the projection $\Phinv(Q)\to \pt$, it follows that $\iota^*\omega$ defines a Dirac morphism 
\[ \T_{\iota^*\omega}p\colon  (\T\Phi^{-1}(Q),T\Phi^{-1}(Q))\da (0,0).\]
Equivalently, $\iota^*\omega$ is symplectic.


\def\cprime{$'$} \def\polhk#1{\setbox0=\hbox{#1}{\ooalign{\hidewidth
			\lower1.5ex\hbox{`}\hidewidth\crcr\unhbox0}}} \def\cprime{$'$}
\def\cprime{$'$} \def\cprime{$'$} \def\cprime{$'$} \def\cprime{$'$}
\def\polhk#1{\setbox0=\hbox{#1}{\ooalign{\hidewidth
			\lower1.5ex\hbox{`}\hidewidth\crcr\unhbox0}}} \def\cprime{$'$}
\def\cprime{$'$} \def\cprime{$'$} \def\cprime{$'$} \def\cprime{$'$}
\providecommand{\bysame}{\leavevmode\hbox to3em{\hrulefill}\thinspace}
\providecommand{\MR}{\relax\ifhmode\unskip\space\fi MR }
\providecommand{\MRhref}[2]{%
	\href{http://www.ams.org/mathscinet-getitem?mr=#1}{#2}
}
\providecommand{\href}[2]{#2}

\end{document}